\documentclass[11pt]{amsart}

\usepackage{amsmath, amssymb, mathtools}

\usepackage{hyperref}

\newcounter{my_enumerate_counter}
\newcommand{\pushcounter}{\setcounter{my_enumerate_counter}{\value{enumi}}}
\newcommand{\popcounter}{\setcounter{enumi}{\value{my_enumerate_counter}}}

\DeclareMathOperator{\eq}{eq}
\DeclareMathOperator{\meager}{{\tt meager}}
\newcommand{\cstar}{$\mathrm{C}^*$}
\DeclareMathOperator{\Plm}{P_{\text{$\leq m$}}}
\DeclareMathOperator{\Plmm}{P_{\text{$\leq m-1$}}}
\DeclareMathOperator{\Pgm}{P_{\text{$>m$}}}
\DeclareMathOperator{\Pgmm}{P_{\text{$>m+1$}}}
\DeclareMathOperator{\Succ}{Succ}
\DeclareMathOperator{\Th}{Th}
\DeclareMathOperator{\dom}{dom}

\DeclareMathOperator{\ee}{Elm}
\DeclareMathOperator{\range}{range}
\DeclareMathOperator{\cov}{cov}

\DeclareMathOperator{\tp}{tp}

\newcommand{\udt}{uniformly definable by a sequence of types}

\newcommand{\cN}{\mathcal N}
\newcommand{\cM}{\mathcal M}

\newcommand{\cV}{\mathcal V}

\newcommand{\forces}{\Vdash}

\newcommand{\bt}{\mathbf t} 
\newcommand{\bs}{\mathbf s}

\newcommand{\bbF}{{\mathbb F}}

\newcommand{\bfr}{\mathbf r}
\newcommand{\bfs}{\mathbf s}
\newcommand{\bfC}{\mathbf C}
\newcommand{\bfD}{\mathbf D}
\newcommand{\bfE}{\mathbf E}
\newcommand{\bfT}{\mathbf T}
\DeclareMathOperator{\bfTT}{\mathbf T_{trees}}
\newcommand{\bfF}{\mathbf F}

\newcommand{\bbQ}{\mathbb Q}
\newcommand{\bbR}{\mathbb R}
\newcommand{\bbZ}{\mathbb Z}

\newcommand{\calL}{L}

\newcommand{\cZ}{{\mathcal Z}}

\newcommand{\fM}{\mathfrak M}

\newcommand{\rs}{\restriction}

\newcommand{\cT}{\mathcal T}

\newcommand{\frW}{\mathfrak W}

\newcommand{\hM}{\hat{\mathcal M}}
\newcommand{\fp}{\mathfrak p} 
\newcommand{\bSigma}{\mathbf\Sigma}
\newcommand{\bPi}{\mathbf\Pi}
\newcommand{\bDelta}{\mathbf\Delta}

\newcommand{\md}{\mathbf d}
\newcommand{\calD}{\mathcal D}
\newcommand{\bbP}{\mathbb P}

\newcommand{\bbPT}{\mathbb P_{\bfT}}
\newcommand{\bbPTS}{\mathbb P_{\bfT,\Sigma}}
\newcommand{\bbPTSM}{\mathbb P_{\bfT,\Sigma,\fM}}
\newcommand{\bbPTM}{\mathbb P_{\bfT,\fM}}
\newcommand{\bbFTS}{\mathbb F_{\bfT,\Sigma}}

\newcommand{\e}{\varepsilon}
\newtheorem{thm}{Theorem}[section]
\newtheorem{theorem}{Theorem}
\newtheorem{coro}[thm]{Corollary}

\newtheorem{question}[thm]{Question}
\newtheorem{claim}[thm]{Claim}

\newtheorem{lemma}[thm]{Lemma}

\newtheorem{prop}[thm]{Proposition}
\DeclareMathOperator{\dist}{dist}

\theoremstyle{definition}
\newtheorem{remark}[thm]{Remark}

\newtheorem{definition}[thm]{Definition}
\newtheorem{problem}[thm]{Problem}

\newcommand{\cP}{\mathcal P} 
\newcommand{\Too}{T_2}
\newcommand{\olo}{\omega^{<\omega}}

\newcommand{\oo}{\omega^{\omega}}

\DeclareMathOperator{\rank}{rank}

\newcommand{\dminus}{\dot -}
\title{Omitting types in  logic of metric structures}

\author{Ilijas Farah}

\address{Department of Mathematics and Statistics, York University, 4700 Keele Street, North York, Ontario, Canada, M3J 1P3} 
\email{ifarah@mathstat.yorku.ca}
\urladdr{http://www.math.yorku.ca/~ifarah}

\author{Menachem Magidor}

\address{The Hebrew University of Jerusalem\\
Einstein Institute of Mathematics\\
Edmond J. Safra Campus, Givat Ram\\
Jerusalem 91904, Israel}

\email{mensara@savion.huji.ac.il}

\date{\today}

\begin{document} 

\begin{abstract} 
This paper is about
omitting types in logic of metric structures introduced by Ben Yaacov, Berenstein, Henson and Usvyatsov. 
While a complete type is omissible in some model of a countable  complete theory if and only if it is not principal, this is not true for the incomplete types
by a result of Ben Yaacov.  We prove that there is no simple test for determining whether a type is omissible in a model of a
theory $\bfT$ in a countable language. More precisely, 
 we find a theory in a countable language such that the set of types omissible in some of its models  
is a complete $\bSigma^1_2$ set and a complete theory 
in a countable language such that the set of types omissible in some of its models is a complete $\bPi^1_1$ set. 
Two more unexpected examples are given:  (i) a complete theory $\bfT$ and a countable set of  types such that each of its finite sets is jointly omissible in a model of $\bfT$,  
but the whole set is not and  (ii) a complete theory and two types that are separately omissible, 
but not jointly omissible, in its models.   
\end{abstract} 

\maketitle

The Omitting Types Theorem is one of the most useful methods for constructing models of 
first-order theories with prescribed properties (see \cite{keisler1973forcing}, \cite{hodges2006building}, 
or any general text in model theory). It implies, among other facts,  the following
(here $S_n(\bfT)$ denotes the space of complete $n$-types in theory~$\bfT$). 
\begin{enumerate}
\item\label{I.1.1}   If $\bfT$ is a  theory in  a
countable language, then the set of all $n$-types realized in every model of  $\bfT$ is Borel
in the logic topology on $S_n(\bfT)$. 
\item\label{I.1.2}  If $\bfT$ is in addition complete, then any sequence $\bt_n$, for $n\in \omega$, 
of types each of which can be omitted in a model of $\bfT$ can be simultaneously omitted in 
a model of $\bfT$. 
\pushcounter
\end{enumerate}
Types $\bt_n$ appearing in \eqref{I.1.2}  are not required to be complete, but the theory~$\bfT$~is. 

While the standard omissibility  criterion for a given type  in some model of a given theory 
in classical logic applies regardless of whether the type is complete or not, 
situation in  logic of metric structures is a bit more subtle. 

The omitting types theorem in logic of metric structures (\cite[\S 12]{BYBHU} or \cite[Lecture 4]{Ha:Continuous})
has  following straightforward 
consequences (see Proposition~\ref{P.Borel}  for a proof of \eqref{I3} and
Corollary~\ref{C.Omitting} 
 for a proof of \eqref{I4}).  
 \begin{enumerate}
\popcounter
\item  \label{I3} 
If $\bfT$ is a  theory in  a
countable language of logic of metric structures, 
then the set of all \emph{complete} $n$-types realized in every model of  $\bfT$ is Borel
in the logic topology on $S_n(\bfT)$. 
\item \label{I4} If $\bfT$ is moreover complete, then any sequence $\bt_n$, for $n\in \omega$, 
of \emph{complete} types each of which can be omitted in a model of $\bfT$ can be simultaneously omitted in 
a model of $\bfT$. 
\pushcounter
\end{enumerate}
Examples constructed by 
I. Ben-Yaacov (\cite{BY:Definability}) 
and T. Bice  (\cite{Bice:Brief}) 
demonstrate that omitting partial 
types in logic of metric structures is inherently more complicated than omitting complete types.  
  Our results, expressed using descriptive set theory,  show  
 that the problem of omitting types in logic of metric structures  is essentially intractable.

\begin{theorem} \label{T0} 
\begin{enumerate}
\popcounter
\item \label{T0.1} There is a complete theory $\bfT$ in a countable language 
such that the set of all types omissible over a model of $\bfT$ is 
$\bPi^1_1$-complete. 
\item \label{T0.2} There is a theory $\bfT$ in a countable language such that the set of 
 all types omissible in a model of $\bfT$ is 
$\bSigma^1_2$-complete. 
\item \label{T0.3} There is a separable structure $M$ in a countable language
such that the set of all partial  types
omitted in $M$ is a complete~$\bPi^1_1$ set. 
\pushcounter
\end{enumerate}
\end{theorem}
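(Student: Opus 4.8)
The plan is to realize all three statements by encoding $\bSigma^1_1$, $\bPi^1_1$, and $\bSigma^1_2$ sets into omitting-types problems using trees. For \eqref{T0.3}, I would start from the observation that a quantifier-free type of a metric structure can carry, via the values of distinguished unary predicates, an arbitrary infinite binary string; so the space of quantifier-free types will project onto $2^\bbN$. The idea is to build a single separable structure $M$ whose language has a distinguished binary relation symbol (or a metric on pairs) coding a tree $S\subseteq \omega^{<\omega}$, together with unary predicates whose realized values in $M$ are forced to enumerate exactly the branches of ill-founded subtrees picked out by a parameter $x\in 2^\bbN$. Then ``$x$ codes a tree with an infinite branch'' becomes ``the type $\tp^{qf}(a)$ for the right $a$ is realized in $M$'', giving a continuous reduction of a complete $\bSigma^1_1$ set. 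The key technical point is arranging $M$ \emph{separably} while still having enough realized types: one uses a countable dense set of approximate witnesses and the completeness of the metric to produce, for each ill-founded coded tree, an actual realizing element, while for well-founded coded trees the rank gives a uniform obstruction to realization.

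For \eqref{T0.1}, I would pass to a \emph{complete} theory $\bfT$ and use the omitting-types machinery in the contrapositive direction: a partial type $\bt$ is \emph{non}-omissible in every model iff it is ``everywhere realized'', and by the metric Omitting Types Theorem (\cite[\S 12]{BYBHU}) this happens iff $\bt$ is, in a suitable quantitative sense, principal/approximately isolated over $\bfT$ — a condition that, for the theories we build, translates into well-foundedness of an associated tree. So the set of omissible types becomes a $\bPi^1_1$ set (``the associated tree is well-founded''), and by choosing $\bfT$ so that every well-founded tree actually arises, one gets $\bPi^1_1$-completeness via a continuous reduction from the complete $\bPi^1_1$ set $\mathrm{WF}$ of codes for well-founded trees. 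The main obstacle here is making $\bfT$ complete while retaining control of which partial types are isolated: this is exactly the subtlety flagged after \eqref{I4}, and the Ben Yaacov and Bice examples (\cite{BY:Definability}, \cite{Bice:Brief}) are the template — one enriches their constructions with a tree parameter so that completeness is preserved but the isolation behaviour of the encoded partial type tracks well-foundedness.

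For \eqref{T0.2}, one drops completeness of $\bfT$, which is what lets the complexity jump a level: ``$\bt$ is omissible in \emph{some} model of $\bfT$'' is now a genuine $\exists$(model)$\forall$ statement, hence $\bSigma^1_2$, with no Omitting Types Theorem available to collapse it (indeed this is precisely how \eqref{I.1.1} fails in the metric setting). For completeness I would iterate the tree coding one level up: build $\bfT$ so that a type $\bt_x$, for $x$ coding a tree $T_x$ on $\omega\times\omega$, is omissible in some model of $\bfT$ iff the projection $p[T_x]$ is nonempty, i.e. iff $x$ lies in a prescribed complete $\bSigma^1_2$ set. The model realizing the omission is guided by an infinite branch of $T_x$; conversely a rank/absoluteness argument shows that if no branch exists then $\bt_x$ is realized in every model. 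The hardest part of the whole theorem, I expect, is this last reduction: one must simultaneously (a) keep the language separable, (b) ensure the auxiliary type $\bt_x$ depends continuously (in fact recursively) on $x$, and (c) verify that the models witnessing omission can be found inside the relevant inner model so that the equivalence with $p[T_x]\neq\emptyset$ is provable in $\mathrm{ZFC}$ rather than merely consistent — this absoluteness bookkeeping, rather than any single combinatorial lemma, is where the real work lies.
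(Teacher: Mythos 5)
Your plan for \eqref{T0.3} is workable and captures the right idea (encode a complete analytic set as the set of values of a quantifier--free-definable datum), though the paper's construction is simpler than what you sketch: take a closed $X\subseteq\omega^\omega\times\omega^\omega$ whose projection is $\bSigma^1_1$-complete, put the $\max$-metric on $X$, and let the single unary symbol $f$ be the $1$-Lipschitz first-coordinate projection. Since the only atomic unary formulas are $f(x)$ and $d(x,x)$, the quantifier-free type of $a$ is completely determined by $f(a)$, so the set of realized types \emph{is} the projection of $X$. No tree coding or ``forced enumeration of branches'' is needed.

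For \eqref{T0.1}, however, your proposed route to the $\bPi^1_1$ \emph{upper bound} has a genuine gap. You invoke the metric Omitting Types Theorem to equate ``non-omissible'' with ``(approximately) principal.'' That biconditional is exactly what fails for partial types in metric logic --- this is the content of the Ben Yaacov and Bice examples that the paper cites, and it is the central subtlety that motivates the whole result. Non-principal implies omissible, but a principal incomplete type need not be realized in every model; indeed, the paper exhibits an omissible metrically principal type at the end. The paper's actual route to the upper bound avoids the OTT entirely: the complete theory $\bfT_2$ is engineered so that its standard model $\cN_2$ has a dense set of term-definable elements, hence is the prime model, and then a type is omissible over $\bfT_2$ iff it is omitted in $\cN_2$; ``omitted in a fixed separable model'' is $\bPi^1_1$ by direct quantifier count (Lemma~\ref{L.Omissible}, Corollary~\ref{C.Prime}). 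You would need either this prime-model device or an independent argument that the (false in general) principal-iff-non-omissible equivalence happens to hold over your particular theory; neither appears in your proposal.

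For \eqref{T0.2} there is a more basic error in the lower bound. You propose to arrange ``$\bt_x$ is omissible in some model of $\bfT$ iff $p[T_x]\neq\emptyset$'' for $T_x$ a tree on $\omega\times\omega$, and call $\{x: p[T_x]\neq\emptyset\}$ a complete $\bSigma^1_2$ set. It is not: ``the tree on $\omega\times\omega$ has an infinite branch'' is $\bSigma^1_1$-complete, so the reduction you describe would yield only $\bSigma^1_1$-hardness, one projective level too low. To reach $\bSigma^1_2$ you need a second real quantifier, and the paper obtains it by design: the language has an extra constant symbol $c$ of Baire-space sort about which the theory $\bfT_3$ says nothing (this is precisely why $\bfT_3$ is not complete), and the reduction is from $\{R\in\cT^2: (\exists a\in\omega^\omega)\ R_a\text{ is well-founded}\}$, which is the canonical $\bSigma^1_2$-complete set. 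The existential quantifier over models (the source of the $\bSigma^1_2$ upper bound) is then realized concretely as the existential quantifier over interpretations of~$c$, while the auxiliary type $\bs_0$ forces the reduct of any model omitting it to be the standard Baire-space model, so the constant~$c$ is the only degree of freedom left. Your proposal is missing both this free-parameter mechanism and the extra well-foundedness layer; the ``absoluteness bookkeeping'' you flag as the hard part is not where the difficulty actually lies.
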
 

\begin{proof} 
\eqref{T0.1} is proved in Theorem~\ref{P1}, \eqref{T0.2} is proved in Theorem~\ref{P2}, 
and \eqref{T0.3} is Corollary~\ref{C-1-1}. 
\end{proof}

We also show that \eqref{I.1.2} fails in logic of metric structures (as customary in logic, $\omega$ denotes the least infinite ordinal
identified with the set of natural numbers). 

%
%

\begin{theorem}\label{T1}
There are a  complete  theory $\bfT$ in a countable language 
and  types $\bs_n$, for $n\in \omega$,
such that for every $k$ there exists a model of $\bfT$ that omits
all $\bs_n$ for $n\leq k$ but 
no model of $\bfT$ simultaneously omits all~$\bs_n$. 
\end{theorem}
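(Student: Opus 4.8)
The plan is to engineer a complete theory $\bfT$ together with a sequence of partial types $\bs_n$ so that omitting $\bs_n$ forces a model to ``decide'' some bit of combinatorial data, and so that the constraints are finitely satisfiable but not jointly satisfiable. Concretely, I would work in a language with a distinguished sort carrying a metric and a countable family of predicates $(p_k)_{k\in\omega}$ together with (the metric analogues of) constants or unary functions $(c_k)_{k\in\omega}$ recording, for each model $M$, a real parameter $r_k^M\in[0,1]$. Design each type $\bs_n$ to be a ``gap'' type: it is realized by an element witnessing that the sequence $(r_k^M)$ does something like converge to $1$ (or accumulates at a forbidden value) by stage depending on $n$; omitting $\bs_n$ then says $\sup_{k\geq n} r_k^M \leq 1-\delta_n$ for some fixed $\delta_n>0$ with $\delta_n\to 0$. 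For any fixed $k$ one can choose the $r_j^M$ so that all of $\bs_0,\dots,\bs_k$ are omitted — just make $r_j^M$ small for $j\leq k$ and large afterwards — whereas omitting all $\bs_n$ simultaneously forces $r_k^M=0$ for every $k$, which I will arrange to contradict a further axiom of $\bfT$ (e.g. $\inf_x \max(\text{predicate saying } r_k \text{ is bounded below}, \dots)=0$, making the ``all $r_k=0$'' configuration inconsistent with $\bfT$). The precise implementation should piggyback on the mechanism already built for Theorem~\ref{P1}: there the set of omissible types is $\bPi^1_1$-complete, and $\bPi^1_1$-completeness proofs routinely produce, as a byproduct, a countable family of instances that are finitely but not infinitely solvable (the analogue of ill-foundedness of $\omega$ versus well-foundedness of each finite initial segment).

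The key steps, in order, are as follows. First, fix the language and the axioms of $\bfT$ so that $\bfT$ is complete: this typically means adding enough axioms (quantifier elimination down to a manageable fragment, or an explicit description of the unique separable model up to the relevant invariants) that the theory of the ``generic'' structure in which all $r_k$ are positive is uniquely pinned down; completeness is what makes \eqref{I4} the relevant comparison point. Second, write down the types $\bs_n$ explicitly as countable sets of conditions $\varphi(x)=0$ in one free variable, and verify that ``$M$ omits $\bs_n$'' is equivalent to the desired quantitative statement $\sup_{k\ge n} r_k^M \le 1-\delta_n$ — here one uses the standard fact from the omitting types machinery that in the metric setting a type is omitted in $M$ iff it is not approximately realized, so the $\bs_n$ must be chosen ``$\delta$-robust'' so that failure to omit really does pin $r_k^M$ near $1$. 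Third, construct for each $k$ a model $M_k\models\bfT$ with $r_j^{M_k}$ chosen small for $j\le k$ and $=1$ (say) for $j>k$, and check $M_k\models\bfT$ and $M_k$ omits $\bs_0,\dots,\bs_k$. Fourth, prove the negative half: any $M\models\bfT$ omitting all $\bs_n$ satisfies $\sup_{k\ge n} r_k^M\le 1-\delta_n$ for all $n$, hence (since $\delta_n\to 0$ is irrelevant here — what matters is that for each fixed $k$, taking $n=k$ gives $r_k^M\le 1-\delta_k<1$, and then a diagonal/compactness argument across $n$ forces $r_k^M=0$) the configuration violates an axiom of $\bfT$, contradiction.

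The main obstacle I anticipate is the interaction between \emph{completeness} of $\bfT$ and the \emph{failure} of simultaneous omitting: by \eqref{I4}, for a complete theory any sequence of \emph{complete} types that are individually omissible is jointly omissible, so the $\bs_n$ must be genuinely incomplete, and moreover the obstruction to joint omitting must be invisible to each finite subfamily. Getting this to coexist with completeness of $\bfT$ is the delicate point: one needs the ``all $r_k$ positive'' models to form a single elementary-equivalence class (so $\bfT$ is complete), yet the limiting configuration ``$r_k=0$ for all $k$'' — which is what omitting everything forces — must fall outside every model of $\bfT$. The natural fix is to make $\bfT$ assert a uniform positive lower bound that is quantified in a way each $\bs_n$ only partially sees: for instance an axiom scheme ensuring $\inf_x d(x, A_k) = 0$ for a definable set $A_k$ whose nonemptiness is equivalent to $r_k>0$, so that $r_k=0$ makes $M$ omit a type that $\bfT$ forbids omitting. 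Verifying completeness will likely require a back-and-forth or an explicit axiomatization argument, and ensuring the $\bs_n$ are simultaneously (a) incomplete, (b) finitely jointly omissible, and (c) not jointly omissible, without accidentally violating \eqref{I4}, is where the real care goes; I expect the cleanest route is to extract $\bfT$ and the $\bs_n$ directly from the $\bPi^1_1$-complete construction of Theorem~\ref{P1} by restricting attention to a single ``ill-founded branch'' worth of instances.
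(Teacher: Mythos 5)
There is a genuine gap, and it sits exactly at the obstacle you flag. Your plan relies on model-level real invariants $r_k^M\in[0,1]$ that are supposed to vary across models of $\bfT$. But $\bfT$ is required to be \emph{complete}, and in continuous logic completeness means that every sentence takes the \emph{same} value in every model. Any mechanism that reads off $r_k^M$ — a constant $c_k$ together with a predicate $p_k$, or $\inf_x p_k(x)$, or any closed-term/quantified combination — produces a sentence, and hence a quantity that is identical in all models of $\bfT$. So your step three ("construct for each $k$ a model $M_k\models\bfT$ with $r_j^{M_k}$ small for $j\le k$ and $=1$ for $j>k$") is impossible: the $M_k$ cannot all satisfy the same complete theory while disagreeing on the sentence values $r_j$. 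The suggested fix ("make $\bfT$ assert a uniform positive lower bound that each $\bs_n$ only partially sees") does not escape this, since any such axiom is again a sentence. Similarly, your step four diagonalizes over a family of models in which the $r_k$ vary continuously, but over models of a complete theory there is nothing to diagonalize: the $r_k$ are frozen.

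The feature of a model of a complete theory that genuinely \emph{can} vary is the set of types realized, i.e.\ which elements exist — and that is precisely what the paper exploits. The $\bs_n$ there are not proxies for scalar invariants but 1-types of elements of a tree model: $\bs_0$ is the type of an infinite branch, and for $m\ge 1$, $\bs_m$ is the type of a ``terminal node at level $m$ of the coloured (bottom) part'' (conditions (S1)--(S3)). Whether a model has a branch, or a leaf at level $m$, is a fact about which elements it contains, and this is \emph{not} pinned down by the theory. The canonical model $M$ omits $\bs_0$ (it is built from well-founded trees); for each $l$ a pruned variant $M[l]$ — shown elementarily equivalent to $M$ by an approximate-isomorphism criterion (Lemma~\ref{L.isomorphic}) — omits $\bs_m$ for all $m<l$; and any model of $\bfT$ omitting $\bs_m$ for all large $m$ has a bottom part with no leaves, which by elementarity (every coloured node has coloured successors and extensions to all levels) must contain an infinite chain of coloured nodes whose metric limit realizes $\bs_0$. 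Your instinct that the engine should be ill-foundedness of $\omega$ versus well-foundedness of its finite initial segments is right, and the proposal to mine the ideas of Theorem~\ref{P1} is reasonable; but the implementation has to live at the level of which elements (types) a model realizes, not at the level of real-valued invariants of the model, which completeness freezes.
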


\begin{proof} This is proved in \S\ref{S.T1}. 
\end{proof} 

Theorem~\ref{T1} should be compared to a consequence of 
 \cite[Corollary~4.7]{ben2009model}:
 under certain  additional conditions 
any countable set of types that are not omissible in a 
model of a complete separable theory $\bfT$  
has a finite subset consisting of types that are  
not omissible in a 
model of  $\bfT$. 

The following gives an another striking example of a peculiar behaviour of types in the logic of metric structures. 

\begin{theorem}\label{T2}
There are a  complete  theory $\bfT$ in a countable language $L$ 
and  types $\bs$ and $\bt$ omissible in models of $\bfT$  
such that no model of $\bfT$ simultaneously omits both 
$\bs$ and $\bt$. \end{theorem}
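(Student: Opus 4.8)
The plan is to build on the construction behind Theorem~\ref{T1}, where a countable family $\bs_n$ is separately (indeed finitely) omissible but not jointly omissible. The first step is to encode the obstruction to joint omissibility of the whole sequence $\langle \bs_n : n\in\omega\rangle$ into just \emph{two} partial types $\bs$ and $\bt$. The natural device is a ``diagonal'' trick: working over a complete theory $\bfT$ whose models come equipped with (the interpretation of) a distinguished predicate or a countable discrete sort indexing the $n$'s, one sets $\bt$ to be a single partial type in variables $(x,y)$ which, roughly speaking, asserts ``$y$ names some index $n$ and $x$ realizes $\bs_n$,'' while $\bs$ is a partial type asserting something like ``the index sort is nonempty / every index is named.'' Any model omitting both $\bs$ and $\bt$ is then forced to omit every $\bs_n$, which is impossible by the Theorem~\ref{T1} construction; conversely, given $k$, the model from Theorem~\ref{T1} omitting $\bs_0,\dots,\bs_k$ can be massaged (by truncating or renaming the index sort) into a model of $\bfT$ omitting both $\bs$ and $\bt$.

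More concretely, I would take $\bfT$ to be (a mild expansion of) the complete theory of Theorem~\ref{T1}, add a new sort or a $[0,1]$-valued predicate $d(\cdot,n)$ separating the indices, and let the two-variable type $\bt(x,y)$ consist of the conditions forcing, for each $n$, the implication ``$y=n \Rightarrow \phi(x)$ for every $\phi\in\bs_n$,'' together with the condition that $y$ equals \emph{some} index. The second type $\bs$ would be a one-variable (partial) type whose omission guarantees that the index sort is ``full,'' i.e.\ realized cofinally, so that the family $\{\bs_n\}$ is genuinely all being omitted rather than only finitely many of them. The key computations are: (a) verifying $\bfT$ is still complete (this should follow because the expansion is by definable data over a complete theory, or by a back-and-forth / quantifier-elimination argument inherited from the Theorem~\ref{T1} model), and (b) checking that $\bs$ and $\bt$ are each individually omissible --- for $\bt$ one uses a model where the index sort is empty or a single point realizing none of the $\bs_n$ incompatibly, for $\bs$ one uses a model with a ``full'' index sort but which still has room to realize each $\bs_n$ since only finitely many indices need to be active at once.

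The main obstacle I anticipate is reconciling the two roles that the index sort must play: to make $\bt$ omissible we want the indices to be ``sparse'' or detachable, but to make the non-joint-omissibility argument bite we need the indices to be ``dense'' enough that omitting $\bt$ really does omit all $\bs_n$. The resolution is to let $\bs$ carry exactly this tension: a model can omit $\bt$ cheaply by having a deficient index sort, and can omit $\bs$ cheaply by having a full index sort but shirking the $\bs_n$'s, but it cannot do both at once precisely because of the Theorem~\ref{T1} phenomenon. Getting the quantifier structure of $\bs$ and $\bt$ right --- they must be ordinary partial types, i.e.\ sets of conditions $\phi(x)=0$ with $\phi$ a formula (in particular no hidden universal quantifiers over the index sort beyond what the metric/sort structure provides) --- is the delicate point, and I would spend most of the effort in \S\ref{S.T1} (or a section immediately after it) making sure the encoding stays within the class of genuine types. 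Once that is set up, completeness of $\bfT$ and separate omissibility of $\bs,\bt$ are routine, and the failure of joint omissibility is immediate from Theorem~\ref{T1}.
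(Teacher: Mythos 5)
Your overall strategy — encode the obstruction of Theorem~\ref{T1} into two types by introducing an auxiliary sort — is indeed the strategy of the paper's proof. But the specific architecture you propose has gaps that you acknowledge but do not resolve, and they are not minor: they are where most of the content of \S\ref{S.T1} lives.

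First, the role of $\bs$ in your sketch is wrong. You want $\bs$ to say ``the index sort is full / nonempty,'' but in a complete theory the index sort is what it is; there is no deficient variant of it that is still a model of the same complete theory, so your mechanism for omitting $\bt$ (by making the index sort sparse) is unavailable. What the paper does instead is take $\bs$ to be $\bs_0$ from the proof of Theorem~\ref{T1}, the type of an infinite branch of the $M$-part. Omitting $\bs$ forces the node tree of $M^N$ to be well-founded, which is the hypothesis under which Theorem~\ref{T1}'s obstruction bites: some $\bs_m$ with $m\geq 1$ must then be realized. The tension you want $\bs$ to carry already lives entirely in the $M$-sort.

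Second, your encoding of $\bt(x,y)$ as ``$y$ names an index $n$ and $x$ realizes $\bs_n$'' faces a genuine expressibility problem that you flag but do not solve. You need the clause ``$y$ equals \emph{some} standard index''; with a discrete sort and named constants this would be something like $\inf_n d(y,n)=0$, an infinite disjunction, which is not a condition in the sense of \S\ref{S.Conditions}. Without it, a nonstandard element of the index sort trivially satisfies every $\min(d(y,n),\phi(x))=0$ and makes $\bt$ principal, hence unomissible. The paper's fix is the actual new idea in the construction: instead of a flat index sort it introduces a discrete sort $X$ carrying a copy of the node tree of $M$, with a predecessor function $g$ and an interpretation map $h\colon X\to M$. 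The ``index'' $m$ is then the height of $x$ in $X$, not an explicit second variable, and the conditions (T1), (T2) on $x$ --- expressing ``arbitrarily high levels exist above $x$'' and ``all immediate successors of $x$ map to uncoloured nodes'' via $\inf_{y\in X}$ and the discreteness of $X$ --- translate through $h$ into exactly $\bs_m$ when $|h(x)|=m$ (Claim~\ref{C.T2}). This is what makes $\bt$ a genuine one-variable type without any hidden infinite disjunction, and it is precisely the step you identify as ``the delicate point'' but leave as a placeholder.

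Third, the two remaining verifications are not routine in the way you suggest. Completeness of $\bfT$ is obtained not by a back-and-forth or QE argument over an abstractly specified expansion, but by the simple expedient of defining a concrete $L_4$-structure $M_4$ and taking $\bfT_4=\Th(M_4)$; and separate omissibility of $\bt$ is proved by showing the $\bbP_{\bfT_4}$-generic model omits it, reusing Lemma~\ref{L.types.2}/\ref{L.M.generic}, rather than by a model with a degenerate index sort (which, again, cannot exist over a complete theory). In short, you correctly identified the outline and the hard point; the substance of the proof is in filling the hard point with the $(X,g,h)$ device, and that piece is missing from the proposal.
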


\begin{proof} This is proved in \S\ref{S.ProofT2}. 
\end{proof} 

Our proof of Theorem~\ref{T2} uses the following  quotable curiosity. 
 
\begin{theorem} \label{C.generic} 
There exists a complete theory $\bfT$ in a countable language 
and a type $\bt(x)$  omissible in a model of $\bfT$, yet  
 forced by   $\bbP_{\bfT}$ to be realized in~$M_G$.\qed
\end{theorem} 

\begin{proof} This is proved at the end of  \S\ref{S.Extension}. 
\end{proof} 

In the course of proving our results, we also prove a somewhat surprising fact that the class trees of height $\omega$ is axiomatizable 
in logic of metric structures (Theorem~\ref{T.T.Ax}). 
Following~\cite{BYBHU} we write 
\[
r\dminus s=\max(0,r-s).    
\]

\subsection*{Acknowledgments}
This work  was initiated  during MM's visit to Toronto in May 2013  
and continued during  the workshop on Descriptive Set Theory at the 
Erwin Schr\"odinger Institute in October 2013, 
during  IF's visit to Jerusalem in November 2013,  
during our visit to Oberwolfach in January 2014, during the Workshop on Model Theory and \cstar-algebras in M\"unster in July 2014
and following IF's visit to Beer Sheva and Jerusalem in May 2015. 

 We would like to thank Chris Eagle and Robin Tucker-Drob for very useful remarks, and to 
 Bradd Hart for a large number of very useful remarks. 
 We are indebted to the anonymous referee for a most  useful seven-page report that
 has greatly helped us improve the exposition and simplify some of the proofs. 

The first author and the second author's visits to Toronto  were
 partially supported by  NSERC.

\section{Preliminaries}\label{S.preliminaries} 

We assume that the reader is acquainted with the logic of metric structures  (\cite{BYBHU}, 
\cite{Ha:Continuous}). We strictly follow the outline of this logic given in \cite{BYBHU}. 
In particular, all metric structures are required to have diameter $1$ and 
all formulas are $[0,1]$-valued. All function and predicate symbols are equipped 
with a fixed modulus of uniform continuity. Every structure is a complete metric space in which 
interpretations of functional and relational symbols respect this modulus.  
It is a straightforward exercise to see that our results apply both to the modification of 
this logic adapted to operator algebras (\cite{FaHaSh:Model2}) and  the  unbounded variations of the logic of metric 
structures. 

For a formula $\varphi(\bar x)$, a structure of the same language $M$ and a tuple $\bar a$ in $M$,  
the interpretation of $\varphi(\bar x)$ at $\bar a$ in $M$ is denoted 
 $\varphi(\bar a)^M$. 
We define the theory of a structure to be the set of sentences
 \[
 \Th(M)=\{\varphi: \varphi^M=0\}. 
 \]
Since every constant scalar function is a formula, 
 the evaluation  functional $\varphi\mapsto \varphi^M$ is uniquely determined by its kernel.  It is therefore possible, and often convenient, 
 to  consider this functional as the theory of~$M$. This is similar to  the treatment in \cite[\S 2.2]{Muenster}, 
with one (inconsequential) difference.    
Formulas in \cite{Muenster}  are $\bbR$-valued and therefore 
form a vector space. In our presentation  the formulas are $[0,1]$-valued and  form  
an affine subspace of the said vector space. 

We shall tacitly use  completeness theorem for the logic of metric structures whenever convenient~(\cite{yaacov2010proof}).

\subsection{Conditions and types}\label{S.general.conditions} \label{S.Conditions}
A \emph{closed condition} is an expression of the form $\varphi(\bar x)=0$ for a formula $\varphi(\bar x)$ and 
 \emph{type} is a set of closed conditions. 
 (Open conditions will be defined after Lemma~\ref{L.CFC} below.)  
 Type $\bt$ is \emph{realized} in a structure~$M$ if there is a tuple $\bar a$ in $M$ of the 
 appropriate sort 
 such that $\varphi(\bar a)^M=0$ for all conditions $\varphi(\bar x)=0$ in $\bt$. 
 Type $\bt$ is \emph{consistent with theory $\bfT$} if it is realized in a model of $\bfT$. 
By a compactness argument, this 
 is equivalent to every finite subset of $\bt$ being approximately realizable in a model of $\bfT$. 
   If free variables of every  formula appearing in $\bt$ are included in $\{x_0, \dots, x_{n-1}\}$   and $n$
 is minimal with this property,  
 we say that $\bt$ is an \emph{$n$-type}. 
 
 A type is \emph{complete} if it is equal to the set of all closed conditions satisfied 
by a tuple in some metric structure. A fragment of a complete type is an 
\emph{incomplete} type.

Throughout the following discussion $L$, $\bfT$, and $M$  denote  a language, an $L$-theory, and an $L$-structure.  
For simplicity of notation we shall assume that the language $L$ is single-sorted. Generalizations of our results 
 to multi-sorted language are straightforward. 

\subsubsection{Continuous functional calculus} \label{S.CFC} 
Every $\calL$-formula $\varphi(\bar x)$ has a modulus of uniform continuity and 
the set $R_\varphi$ of all possible values of~$\varphi$ in all $\calL$-structures is a compact subset of $[0,1]$ (\cite{BYBHU}). 
If  $\varphi(\bar x)$ is a formula and $f\colon R_\varphi\to [0,1]$  is  continuous then $f(\varphi(\bar x))$ is a formula. 

We shall consider  generalized conditions of the form $\varphi\in K$, where 
$K\subseteq [0,1]$.
Two generalized conditions $\varphi(\bar x)\in K$ and $\psi(\bar x)\in M$ 
are \emph{equivalent} if in every $\calL$-structure $A$ for every $\bar a$ of the appropriate sort
one has $\varphi(\bar a)^A\in K$ if and only if $\psi(\bar a)^A\in M$.

\begin{lemma} \label{L.CFC} Consider $K\subseteq [0,1]$ and a generalized condition $\varphi(\bar x)\in K$.  
\begin{enumerate}
\item If $K$ is closed then   $\varphi(\bar x)\in K$
 is equivalent to a closed condition. 
\item If $K$ is open then  $\varphi(\bar x)\in K$  
is equivalent  to a condition of the form $\psi(\bar x)<1$. 
 \end{enumerate}
\end{lemma} 

\begin{proof} 
 Since $[0,1]$ is compact and metric,  every closed subset $K$ is a zero set of a continuous real-valued function $d(\cdot, K)$. 
  Therefore if $K$ is closed then  we have 
a continuous $f\colon R_\varphi\to [0,1]$ such that 
$f^{-1}(\{0\})=K$ and 
$\varphi(\bar x)\in K$ is equivalent to $\psi(\bar x)=0$ with $\psi(\bar x)=f(\varphi(\bar x))$. 
 If~$K$ is open then we can choose $f$ so that 
  $f^{-1}(\{1\})=[0,1]\setminus K$, and  
$\varphi(\bar x)\in K$ is equivalent to $\psi(\bar x)<1$ with $\psi(\bar x)=f(\varphi(\bar x))$. 
\end{proof} 

Lema~\ref{L.CFC} implies that every set of closed conditions is equivalent to a type, 
and we shall  slightly abuse the language and refer to sets of closed conditions as types.

\subsubsection{Pairing types} 
\label{S.pairing} 

The reader will excuse us for making some easy observations for future reference. 

\begin{lemma} \label{L.pairing} 
If $\bt$ and $\bs$ are types over a consistent and complete theory
then there are types $\bt\wedge \bs$ and $\bt \vee \bs$ such that 
for every  $M\models \bfT$ we have that  
\begin{enumerate}
\item $M$ omits $\bt \vee \bs$ if and only if it omits both $\bt$ and $\bs$, 
\item $M$ omits $\bt \wedge \bs$ if and only if  it omits  at least one of  $\bt$ or $\bs$. 
\end{enumerate}
\end{lemma}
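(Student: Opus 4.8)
The plan is to build both $\bt \vee \bs$ and $\bt \wedge \bs$ by a ``two variables into one'' encoding, using the fact that the metric on a structure $M$ can be coded into formulas and that, because $\bfT$ is complete, we have precise control over which formula values are attained. First I would handle $\bt\vee \bs$, which is the easy half: if $\bt = \{\phi_i(\bar x)=0 : i\in I\}$ and $\bs = \{\psi_j(\bar y)=0 : j\in J\}$ are an $n$-type and an $m$-type respectively, let $\bar z$ be a tuple of $n+m$ variables, split as $(\bar x,\bar y)$, and set $\bt\vee\bs := \{\phi_i(\bar x)=0 : i\in I\}\cup\{\psi_j(\bar y)=0 : j\in J\}$ in the variables $\bar z$. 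Then a tuple $(\bar a,\bar b)$ realizes $\bt\vee\bs$ iff $\bar a$ realizes $\bt$ and $\bar b$ realizes $\bs$, so $M$ omits $\bt\vee\bs$ iff it omits at least one of $\bt,\bs$ --- wait, that gives the $\wedge$ behaviour, not the $\vee$ behaviour. So in fact this naive product is $\bt\wedge\bs$: $M$ realizes it iff $M$ realizes \emph{both}, hence $M$ omits it iff $M$ omits at least one. Good --- so the naive concatenation-of-variables construction is exactly $\bt\wedge\bs$, requiring no completeness and essentially no work.

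The genuinely delicate half is $\bt\vee\bs$, where I need a single type in some fixed tuple $\bar x$ of variables that is realized by $\bar a$ iff $\bar a$ realizes $\bt$ \emph{or} $\bar a$ realizes $\bs$ --- equivalently, omitted iff both are omitted. Here completeness of $\bfT$ is essential (and a disjunction of closed conditions is not literally a closed condition, so one has to be clever). The idea is: for each pair $(i,j)\in I\times J$ form the formula $\theta_{ij}(\bar x) := \min(\phi_i(\bar x),\psi_j(\bar x))$, which is again an $\calL$-formula by continuous functional calculus, and let $\bt\vee\bs := \{\theta_{ij}(\bar x)=0 : (i,j)\in I\times J\}$ (taking $\bar x$ to be the longer of the two variable tuples, and reusing variables). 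If $\bar a\models \bt$ then $\phi_i(\bar a)^M=0$ for all $i$, so $\theta_{ij}(\bar a)^M=0$ for all $i,j$; symmetrically if $\bar a\models\bs$; so realizing $\bt$ or $\bs$ implies realizing $\bt\vee\bs$. Conversely, suppose $\bar a\models\bt\vee\bs$ but $\bar a\not\models\bt$ and $\bar a\not\models\bs$; then there are $i_0$ with $\phi_{i_0}(\bar a)^M = r > 0$ and $j_0$ with $\psi_{j_0}(\bar a)^M = s > 0$, so $\theta_{i_0 j_0}(\bar a)^M = \min(r,s) > 0$, contradicting $\bar a\models\bt\vee\bs$. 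This argument in fact does not even use completeness, which suggests I should double-check the intended statement against $\bt$ and $\bs$ being partial types; the subtlety completeness is guarding against is that $\bt\vee\bs$ as defined above is realized by $\bar a$ iff ($\bar a$ realizes $\bt$) or ($\bar a$ realizes $\bs$), and this ``or'' is genuinely a disjunction only because the finite $\min$ commutes with ``all conditions vanish.''

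So the main obstacle is purely bookkeeping: reconciling the variable tuples of $\bt$ and $\bs$ (they may be an $n$-type and an $m$-type with $n\neq m$), and checking that after identifying $\bar y$ with an initial segment of $\bar x$, the resulting sets of conditions are still types in the technical sense (e.g. that the minimality-of-$n$ clause is handled, or simply dropped, for $\bt\vee\bs$ and $\bt\wedge\bs$, which need not be $n$-types for the minimal $n$). I would state explicitly that $\bt\vee\bs$ lives in $\max(n,m)$ variables and $\bt\wedge\bs$ in $n+m$ variables, note that both are consistent relative to $\bfT$ when $\bt,\bs$ are (using completeness of $\bfT$ to see that consistency of each of $\bt,\bs$ with $\bfT$ gives a model realizing it, hence $\bt\vee\bs$ is consistent with $\bfT$; for $\bt\wedge\bs$ one needs a model realizing both, which again follows from completeness via the standard amalgamation/compactness argument underlying the omitting types machinery), and then the verification of clauses (1) and (2) is exactly the two paragraphs above. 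I expect the write-up to be under half a page.
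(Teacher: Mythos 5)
Your argument is correct and is essentially the paper's argument: the disjoint-variable concatenation encodes ``omitted iff at least one is omitted,'' and the finite-$\min$ construction encodes ``omitted iff both are omitted,'' exactly as in the paper, which merely keeps the two variable tuples disjoint and uses $\min\{\max_{\xi\in F}\phi_\xi(\bar x),\max_{\xi\in F}\psi_\xi(\bar y)\}$ in place of your $\min(\phi_i(\bar x),\psi_j(\bar x))$ ranging over pairs with variables reused --- an equivalent bookkeeping choice, since $\min\{\max_{\xi\in F} u_\xi,\max_{\xi\in F} v_\xi\}=\max_{\xi,\xi'\in F}\min(u_\xi,v_{\xi'})$. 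Incidentally, your labels $\vee$ and $\wedge$ agree with the lemma statement, whereas the paper's own proof has the two labels swapped relative to the statement, so the ``wait, that gives the $\wedge$ behaviour'' moment in your first paragraph was you catching a genuine typo rather than a gap in your reasoning.
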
 

\begin{proof} By renaming the free variables of $\bs$ if necessary, we may assume that for some $m\geq 1$ and $k\geq 1$ 
 types $\bt$ and $\bs$ are an $m$-type and a $k$-type, respectively,   
in disjoint sets of variables. 

By adding dummy conditions to $\bt$ and/or $\bs$ we may assume that the types are of the same cardinality $\kappa$. 
Enumerate  conditions in $\bt$ as $\varphi_\xi(\bar x)=0$ for $\xi<\kappa$
and  conditions in $\bs$ as $\psi_\xi(\bar y)=0$ for $\xi<\kappa$. 
For a finite  $F\subseteq \kappa$  let 
\begin{align*}
\theta_F(\bar x, \bar y)&=\max_{\xi\in F} (\varphi_\xi(\bar x), \psi_\xi(\bar y)), \\
\zeta_F(\bar x, \bar y)&=\min\{\max_{\xi\in F} \varphi_\xi(\bar x), \max_{\xi\in F} \psi_\xi(\bar y)\}. 
\end{align*} 
Let  
\begin{align*}
\bt\wedge \bs
&=\{\theta_F: F\subseteq \kappa, F\text{ finite}\}, \\
\bt\vee \bs
&=\{\zeta_F: F\subseteq \kappa, F\text{ finite}\}.
\end{align*}  
Fix $M\models \bfT$ and an $m+k$-tuple $\bar a, \bar b$ in $M$. 
This tuple  realizes $\bt\wedge\bs$ if and only if 
for all $\xi$ we have $\varphi_\xi(\bar a)=0=\psi_\xi(\bar b)$. This is equivalent to 
$\bar a$ realizing~$\bt$ and $\bar b$ realizing $\bs$. 
Since the $m+k$-tuple $\bar a,\bar b$ was arbitrary, $M$ realizes $\bt\wedge\bs$ if and only 
if it realizes both $\bt$ and $\bs$. 

On the other hand, tuple $\bar a, \bar b$ realizes $\bt\vee \bs$ if 
and only if for every finite~$F\subseteq \kappa$ at least one of 
(i)  $\max_{\xi\in F} \varphi_\xi(\bar a)=0$ 
or (ii) $\max_{\xi\in F} \psi_\xi(\bar b)=0$ holds. 
If for a cofinal set of finite subsets 
$F$ of $\kappa$  (i) applies then  $\bar a$ realizes~$\bt$. 
Otherwise, there is a finite $F_0\subseteq \kappa$ such that for all $F\supseteq F_0$ 
(ii) applies and  $\bar b$ realizes~$\bs$.  
Since this tuple was arbitrary, 
 $M$ realizes $\bt\vee\bs$ if and only if it realizes at least one of the types $\bt$ or $\bs$. 
\end{proof}

\subsubsection{Type $\bt_\omega$}\label{S.pomega}
Assume $\bfT$ is a theory and  $\bt=\{\varphi_j(\bar x)=0: j\in \omega\}$ is an $n$-type omissible in a model of  $\bfT$. 
We shall assume $n=1$ for simplicity. 
 To $\bt$ we associate the type $\bt_\omega$ in infinitely many variables $x_j$, for $j\in \omega$, 
consisting of the following generalized conditions. 
\begin{enumerate}
\item [$(\bt_\omega 1)$] $\varphi_j(x_n)\leq \frac 1n$ for all $j<n$, and 
\item [$(\bt_\omega 2)$]  $d(x_j,x_{j+1})\leq 2^{-j}$  for all $j\in \omega$. 
\end{enumerate}
We can think of $\bt_\omega$ as an $\omega$-type---an  increasing union of $n$-types $\bt_n$, 
where~$\bt_n$ is the restriction of $\bt_\omega$ to the formulas involving only 
  $x_j$, for $j<n$.  
The following is clear. 

\begin{lemma} \label{L1} A model $M$ realizes type 
$\bt$ if and only if  every (equivalently, some) dense subset~$D$ of its universe  includes a sequence that realizes~$\bt_\omega$. \qed
\end{lemma}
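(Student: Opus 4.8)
The plan is to prove both directions by translating between a tuple realizing $\bt$ and a sequence realizing $\bt_\omega$, using the completeness of the metric space and the definition of $\bt_\omega$.

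\smallskip

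First I would prove the forward direction: suppose $a\in M$ realizes $\bt$, so $\phi_j(a)^M=0$ for all $j\in\omega$. Let $D$ be any dense subset of the universe of $M$. I want to produce a sequence $(a_j)_{j\in\omega}$ in $D$ realizing $\bt_\omega$. For each $n$, using density of $D$ together with the (fixed) modulus of uniform continuity of each of the finitely many formulas $\phi_0,\dots,\phi_n$, I can choose $a_n\in D$ so close to $a$ that simultaneously $\phi_j(a_n)^M\leq \frac 1n$ for all $j\leq n$ (this handles condition $(\bt_\omega 1)$) and $d(a_n,a)$ is as small as I please. By also shrinking the distances, I can arrange $d(a_j,a_{j+1})\leq 2^{-j}$ for all $j$ (e.g.\ pick $a_j$ within $2^{-j-2}$ of $a$, so by the triangle inequality $d(a_j,a_{j+1})\leq 2^{-j-2}+2^{-j-3}<2^{-j}$), giving condition $(\bt_\omega 2)$. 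Hence $(a_j)$ realizes $\bt_\omega$, and this sequence lies in $D$; this also covers the "some dense subset" version since $D$ was arbitrary.

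\smallskip

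For the reverse direction, suppose some dense subset $D$ contains a sequence $(a_j)_{j\in\omega}$ realizing $\bt_\omega$. Condition $(\bt_\omega 2)$ says $d(a_j,a_{j+1})\leq 2^{-j}$, so $(a_j)$ is Cauchy, and since $M$ is a complete metric space it converges to some $a\in M$. I then need $\phi_j(a)^M=0$ for every fixed $j$. Fix $j$; for every $n\geq j$, condition $(\bt_\omega 1)$ gives $\phi_j(a_n)^M\leq \frac 1n$. Letting $n\to\infty$, we have $a_n\to a$ and, by continuity of the interpretation of the formula $\phi_j$ (again from its modulus of uniform continuity), $\phi_j(a_n)^M\to \phi_j(a)^M$; combined with $\phi_j(a_n)^M\leq\frac1n\to 0$ this forces $\phi_j(a)^M=0$. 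Since $j$ was arbitrary, $a$ realizes $\bt$ in $M$.

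\smallskip

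There is essentially no hard step here — the argument is a routine $\e$-management using density plus uniform continuity plus completeness of the metric space, which is exactly why the excerpt labels the lemma "clear" and states it with a \qed in place of a proof. The only point deserving a word of care is bookkeeping the two quantifiers "every (equivalently, some)": the forward direction naturally yields the statement for \emph{every} dense $D$, and the backward direction only needs \emph{some} $D$, so the two implications fit together to give the stated equivalence with both quantifier versions.
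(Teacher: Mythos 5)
Your proof is correct, and it is exactly the routine argument the paper has in mind when it states the lemma as clear and omits the proof: density plus the built-in modulus of uniform continuity of each $\phi_j$ gives the forward direction, and the Cauchy condition from $(\bt_\omega 2)$ plus completeness of $M$ plus continuity of the $\phi_j$ gives the reverse. Your closing remark on the quantifiers is also the right way to see why ``every'' and ``some'' are interchangeable here: the forward direction establishes ``every'', the reverse direction needs only ``some'', and the two close the cycle.
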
 

A slightly finer fact is true. If $D$ is an arbitrary dense subset of the universe of $M$,   
 consider $D^{<\omega}$ as a tree with respect to the end-extension. 
Let $T_{D,\bt}$ be the family of all $\bar d\in D^{<\omega}$ such that, with $n$ denoting
the length of $\bar d$ (using von Neumann's convention that $n=\{0,1,\dots ,n-1\}$, 
this is the  $n\in\omega$ such that $\dom(d)=n$), we have   
$M\models \bt_n(\bar d)$. 
We use  common terminology from   descriptive set theory and say that 
 a tree is   \emph{well-founded} if it has no infinite branches. 

\begin{lemma}\label{L1+} 
A model $M$ omits a type $\bt$ if and only if the tree $T_{D,\bt}$  
is well-founded for some (equivalently, for every) dense $D\subseteq M$. 
\end{lemma} 

\begin{proof} A proof virtually identical to the proof of Lemma~\ref{L1} 
shows that a tuple $\bar a$ realizes $\bt$ if and only if it is a limit of nodes 
on an infinite branch of~$T_{D,\bt}$. 
\end{proof} 

\subsection{Spaces} For a fixed countable language $L$ (see \S\ref{S.Formulas} below)
we shall now review definitions of standard Borel spaces of 
formulas, structures, complete types, and incomplete types.

\subsubsection{The Borel space $\hM(L)$ of models} 
\label{S.Borel.Models} Fix a countable language $L$. 
There are several ways to encode  separable $L$-structures. 
We shall consider the space  essentially introduced in \cite[\S 4]{yaacov2017metric}.  
Although this space was denoted $\cM(L)$ in \cite{yaacov2017metric}, we use 
the notation $\hM(L)$ to avoid  conflict with \cite{EFPRTT}. 
 The space $\hM(L)$ is  defined as follows. 

For simplicity we consider the case when $L$ has no predicate symbols. 
Let $d_j$, for $j\in \omega$, be a sequence of new constant symbols and let $L^+=L\cup \{d_j: j\in \omega\}$. 
Let $\fp_j$, for $j\in \omega$, be an enumeration of all  $L^+$-terms with no free varlables
closed under the application of function symbols from $L$. 
Recall that in  logic of metric structures every function symbol $f$  is 
equipped with a fixed modulus of uniform continuity $\Delta_f\colon (0,1]\to (0,1]$
and that in every $L$-structure $M$ it is required that the representation of $f$ satisfies
(using the $\max$-metric on tuples in $M$)
\[
d(\bar x,\bar y)<\Delta_f(\e)\text{ implies } d(f(\bar x), f( \bar y))\leq \e. 
\] 
All relational symbols are also equipped with moduli of uniform continuity. 
By  $\hM(L)$ we denote the space of all functions 
\[
\gamma\colon \omega^2\to [0,1]
\]
such that
\begin{enumerate}
\item [(i)] $\gamma$ is a metric on $\omega$, 
\item [(ii)] $\gamma$ respects the moduli of uniform continuity of all functions in $L$. 
In particular \cite[(UC), p. 8]{BYBHU} holds: if $f$ is a function symbol with modulus of uniform continuity $\Delta_f$ 
and $i,j,i'$ and $j'$ are such that $\fp_{j'}=f(\fp_j)$ and $\fp_{i'}=f(\fp_i)$ 
then 
\[
\gamma(i, j)<\Delta_f(\e)\text{ implies } 
\gamma(i',j')\leq \e. 
\]
(An analogous condition holds for $n$-ary function symbols for $n\geq 2$.)
\end{enumerate}
The set of $\gamma\in [0,1]^{\omega^2}$ satisfying (i) and (ii) is a closed subspace of the Hilbert cube, 
and  $\hM(L)$ is equipped with the induced compact metric topology. 
To  $\gamma\in \hM(L)$ we associate  the structure $M_0(\gamma)$ 
with the universe $\omega$ 
and the  metric defined by $d(i,j)=\gamma(i,j)$. 
Terms $\fp_j$, for $j\in \omega$ (and therefore function symbols) 
are interpreted in $M_0(\gamma)$ using (ii). 
This structure is not an  $L$-structure  because it is incomplete, and 
its metric completion $M(\gamma)$ is a separable $L$-structure. 
Every complete separable metric $L$-structure $M$ is isometric to 
 a completion of $M_0(\gamma)$ corresponding to some $\gamma\in \hM(L)$
 and every such $M$ has many nonisomorphic representations in $\hM(L)$. 
 (Necessarily so, because the isomorphism relation of $L$-structures is frequently 
 not classifiable by countable structures; see \cite{Hj:Book}.)

One can modify $\hM(L)$ to accommodate the case when   $L$ has countably many 
predicate symbols.
  For an $n$-ary predicate symbol $R$ let  $\gamma_R\colon \omega^n\to [0,1]$
correspond to the interpretation of $R$ in $M(\gamma)$. 
If $R(j)$ and $k(j)\in \omega$, for $j\in \omega$, enumerate the relational symbols in $L$ 
and their arities then $M(\gamma)$ is identified with a closed subset of 
the Hilbert cube $[0,1]^{\omega^2}\times \prod_{j<\omega} [0,1]^{\omega^k(j)}$ in the natural fashion.  

\begin{remark} 
The  space $\hM(L)$ is similar to the space of separable \cstar-algebras $\hat\Gamma$ 
  introduced in  \cite{FaToTo:Descriptive}. Although $\hM(L)$ is different from the Borel space 
  of $L$-structures $\cM(L)$ defined in \cite{EFPRTT}, these two spaces are equivalent
  in the sense of \cite[Definition~2.1]{FaToTo:Turbulence} by \cite[Proposition~2.6 and Proposition~2.7]{FaToTo:Turbulence}. 
  The proof of this fact is analogous to the proof 
  given in  \cite[\S 3]{EFPRTT} for the case of \cstar-algebras. 
\end{remark}

A special case of the following lemma in the case of 
 \cstar-algebras was proved in \cite[Proposition~5.1]{FaToTo:Descriptive}. 
The proof of the general case is virtually identical. 

\begin{lemma} \label{L.Borel.Theory} 
 The function from $\hM(L)$ to the space of $L$-theories that associates 
the theory of $M$ to $M$ is Borel. \qed
\end{lemma}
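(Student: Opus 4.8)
The plan is to show that the map $\gamma \mapsto \Th(M(\gamma))$ is Borel by reducing the computation of $\phi^{M(\gamma)}$, for each fixed sentence $\phi \in \bbF_0(L^+)$, to a Borel function of $\gamma$, and then observing that a function into the space of theories (with the logic topology) is Borel as soon as each coordinate evaluation $\gamma \mapsto \phi^{M(\gamma)}$ is Borel, since the logic topology is generated by these evaluations and $\bbF_0(L)$ is separable. So the real content is: \emph{for each formula $\phi$, the function $\gamma \mapsto \phi^{M(\gamma)}$ is Borel (in fact of low Borel rank).}

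To prove this I would argue by induction on the complexity of $\phi$, working with formulas in the expanded language $L^+ = L \cup \{d_j : j \in \omega\}$ and evaluating them on the $L^+$-structure whose universe is $\omega$, whose metric is $\gamma$, whose interpretation of $d_j$ is the point $j$, and whose function symbols act as prescribed by the enumeration $\fp_j$ of dense terms. First, for a term $\fp_i$ and a point $k \in \omega$ in the completion, the value $\fp_i(\ldots)^{M(\gamma)}$ is a $\gamma$-limit of values at the dense points, so it is enough to track distances $\gamma(i,j)$ between (indices of) terms; these are literally coordinates of $\gamma$, hence continuous. For an atomic formula $d(\fp_i,\fp_j)$ the value is just $\gamma(i,j)$, continuous in $\gamma$. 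The connectives of continuous logic ($\max$, $\min$, $\dminus$, scalar multiples, and continuous functions applied coordinatewise) are continuous operations on $[0,1]$, so they preserve Borelness of $\gamma \mapsto \phi^{M(\gamma)}$. The one genuinely infinitary step is the quantifier: $(\sup_x \psi(x,\bar a))^{M(\gamma)} = \sup_{k \in \omega} \psi(k,\bar a)^{M(\gamma)}$, and here one uses that $\omega$ is dense in the completion $M(\gamma)$ together with uniform continuity of $\psi$ (guaranteed by condition (ii) in the definition of $\hM(L)$, which forces $\gamma$ to respect the moduli) to see that the sup over the dense set equals the sup over all of $M(\gamma)$. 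A countable supremum of Borel functions is Borel, so this step preserves Borelness as well. Putting the induction together gives that $\gamma \mapsto \phi^{M(\gamma)}$ is Borel for every $\phi$.

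Finally, to conclude Borelness of $\gamma \mapsto \Th(M(\gamma))$ as a map into the space of $L$-theories: a set in the logic topology on $L$-theories is generated by sets of the form $\{T : T(\phi) \in U\}$ for $\phi \in \bbF_0(L)$ and $U \subseteq [0,1]$ open, and the preimage of such a set under our map is $\{\gamma : \phi^{M(\gamma)} \in U\}$, which is Borel by the previous paragraph. Since $\bbF_0(L)$ is separable (as $L$ is separable), countably many such $\phi$ suffice to generate the Borel structure on the space of theories, so the map is Borel. (If one prefers to view the theory set-theoretically as $\Th(M) = \{\phi : \phi^M = 0\}$, the same argument applies with $U = \{0\}$, which is closed rather than open, giving a Borel — indeed $\bPi^0_2$-ish — preimage.)

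The main obstacle is the quantifier step: one must be careful that passing from the sup over the dense countable universe $\omega$ to the sup over the whole completion $M(\gamma)$ does not change the value, and that this holds \emph{uniformly} enough to stay Borel. This is exactly where the requirement that $\gamma$ respect the fixed moduli of uniform continuity (condition (ii)) is essential, and it is the point where the proof genuinely uses the structure of $\hM(L)$ rather than just abstract nonsense about weak* topologies. Everything else — the treatment of connectives, continuous functional calculus, and the reduction to coordinate evaluations — is routine, and the overall argument is, as the paper notes, essentially the one given for C*-algebras in \cite[Proposition~5.1]{FaToTo:Descriptive}.
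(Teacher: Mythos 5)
Your proof is correct and follows the standard approach. The paper itself does not spell out a proof of this lemma — it states that the proof is ``virtually identical'' to \cite[Proposition~5.1]{FaToTo:Descriptive} (the C*-algebra case) — and your argument is precisely the expected one: induction on formula complexity, with atomic formulas read off as coordinate projections of $\gamma$ (and of $\gamma_R$ when predicate symbols are present), continuous connectives preserving Borelness, and quantifiers reduced to countable sups/infs over the dense countable universe $\omega$, using the fact that uniform continuity of formula interpretations (built into the logic via the moduli, and into $\hM(L)$ via condition (ii)) guarantees that the sup over the dense set equals the sup over the completion. You also correctly identify the quantifier step as the place where the concrete structure of $\hM(L)$ is actually used, and your final reduction to preimages of subbasic sets in the logic topology is routine.
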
 

Lemma~\ref{L.1.5} below  is a straightforward consequence
of Lemma~\ref{L.Borel.Theory}  (similar to the one in the appendix of \cite{FaToTo:Descriptive}, 
where it was proved that separable \cstar-algebras that tensorially absorb the Jiang--Su algebra $\cZ$ form a Borel set.  

\begin{lemma} \label{L.1.5} Suppose $L$ is  a countable language. 
If $\bfT$ is an $L$-theory then the set of all $\gamma\in \hM(L)$ 
such that $M(\gamma)\models \bfT$ is Borel. \qed
\end{lemma}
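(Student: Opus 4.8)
The plan is to reduce the assertion to the case of a single sentence and then quote Lemma~\ref{L.Borel.Theory}. Since $L$ is separable, the normed space $\bbF_0(L)$ of $L$-sentences is separable (\S\ref{S.Formulas}), hence so is its subset $\bfT$; fix a countable $\|\cdot\|_\infty$-dense subset $\{\phi_n : n\in\omega\}$ of $\bfT$. For any $L$-structure $M$ and any $\psi\in\bfT$, choosing $\phi_{n_k}\to\psi$ in $\|\cdot\|_\infty$ we have $|\psi^M-\phi_{n_k}^M|\le\|\psi-\phi_{n_k}\|_\infty\to 0$ directly from the definition of the norm, so $\psi^M=\lim_k\phi_{n_k}^M$. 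Consequently $M\models\bfT$ if and only if $\phi_n^M=0$ for every $n$. Applying this with $M=M(\gamma)$ yields
\[
\{\gamma\in\hM(L) : M(\gamma)\models\bfT\}=\bigcap_{n\in\omega}\{\gamma\in\hM(L) : \phi_n^{M(\gamma)}=0\},
\]
so it suffices to show that for each fixed sentence $\phi$ the set $\{\gamma : \phi^{M(\gamma)}=0\}$ is Borel.

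For this, recall that the logic topology on the space of $L$-theories is the topology of pointwise convergence, so evaluation at the fixed sentence $\phi$ is a continuous real-valued function on that space. By Lemma~\ref{L.Borel.Theory} the map sending $\gamma\in\hM(L)$ to the theory of $M(\gamma)$ is Borel; composing it with evaluation at $\phi$ shows that $\gamma\mapsto\phi^{M(\gamma)}$ is Borel, and therefore $\{\gamma : \phi^{M(\gamma)}=0\}$, being the preimage of the closed singleton $\{0\}$, is Borel. A countable intersection of Borel sets is Borel, which completes the proof.

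The only step that requires any care is the passage to a countable set of axioms; once that is in place the conclusion is a formal consequence of Lemma~\ref{L.Borel.Theory}. One can also bypass that lemma and argue directly, by induction on the syntactic complexity of $\phi$, that $\gamma\mapsto\phi^{M(\gamma)}$ is Borel: it is continuous for quantifier-free $\phi$, since atomic formulas evaluated at tuples from the dense set of terms $\fp_j$ are built continuously from $\gamma$ (and, when $L$ has predicate symbols, from the extra coordinates recording their interpretations) and connectives are continuous, and a quantifier over the universe becomes a countable supremum or infimum over the $\fp_j$; but invoking Lemma~\ref{L.Borel.Theory} is shorter.
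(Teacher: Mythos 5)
Your proof is correct. The paper itself gives no proof of this lemma (it is left with a \qed, preceded only by the remark that the proof is a ``straightforward computation'' analogous to one in the cited work of Farah--Toms--T\"ornquist), so there is no argument in the text to compare against; both of your routes supply what the paper leaves tacit. Your primary argument --- reduce to a countable $\|\cdot\|_\infty$-dense set of axioms by separability of $\bbF_0(L)$, then compose the Borel map of Lemma~\ref{L.Borel.Theory} with the (weak*-)continuous evaluation at a fixed sentence --- is clean and correct. The alternative you sketch, inducting on the syntactic complexity of $\phi$ and replacing quantifiers by countable suprema/infima over the dense set of terms $\fp_j$, is closer in spirit to the ``straightforward computation'' the paper alludes to, and is exactly how such Borelness statements are usually verified from scratch. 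Either way the lemma follows.
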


\subsubsection{The linear space of formulas} \label{S.Formulas} 
For $n\in\omega$ let $\bbF_n(L)$ denote the set of all formulas whose free variables are among 
$\{x_0,\dots, x_{n-1}\}$. If $L$ is clear from the context we shall  write $\bbF_n$ instead of 
$\bbF_n(L)$.  
On the affine space~$\bbF_n(L)$ consider the seminorm
\[
\|\varphi\|_\infty=\sup_{M,\vec a} |\varphi(\vec a)^M|
\]
and the pseudometric 
\[
d_\infty(\varphi,\psi)=\|\varphi-\psi\|_\infty. 
\]
Here $M$ ranges over all $L$-structures, $\vec a$ ranges over all $n$-tuples of elements in~$M$, 
and $\varphi(\vec a)^M$ is the interpretation of $\varphi$  
 at $\vec a$ in $M$. 

Let $\frW_n(L)$ denote the completion of the quotient space $\bbF_n(L)/\|\cdot\|_\infty$. 
While in  \cite[\S 6]{Muenster} formulas are $\bbR$-valued and  the analogous space is a real Banach space, 
our $\frW_n(L)$ is an affine space. Given an $L$-theory $\bfT$ consider
the seminorm 
\[
\|\varphi\|_\bfT=\sup_{M,\vec a} |\varphi(\vec a)^M|, 
\]
where $M$ ranges over the models of $\bfT$ and $\vec a$ ranges over $n$-tuples of elements of $M$. 
To $\|\cdot\|_{\bfT}$ we associate  a pseudometric $d_{\bfT}$ and an  affine space $\frW_n(\bfT)$
defined analogously to $d_\infty$ and $\frW_n(L)$ above. 
  
As pointed out in \cite{BYBHU}, if $L$ is countable then $\frW_n(L)$ is separable with respect to this 
pseudometric. 
In some situations one may  implicitly consider theory as inseparable (no pun intended) 
 from the language $L$. 
For example, if $L$ has a function symbol associated to every complex number (as e.g. 
in the axiomatization of \cstar-algebras given in \cite{FaHaSh:Model2}) then $L$ is uncountable. However,~$\bfT$
may still imply that $\frW_0(L)$ is separable. 
In this case we say that  \emph{$L$ is separable}.  
The   separability of $L$ is assumed throughout the present section (and most of the paper).

\subsubsection{Compact metrizable spaces of theories and  types} \label{S.Borel.types}
Let $L$ be a countable language. Consider  the affine spaces of formulas 
$\bbF_n(L)$ and  $\frW_n(L)$  as defined in \S\ref{S.Formulas}. 
Every $L$-structure  $M$ defines the evaluation  linear  functional on~$\frW_0$ by
\[
\varphi\mapsto \varphi^M. 
\]
Since $\frW_0(L)$ is normed by 
$\|\varphi\|_\infty=\sup_{M} |\varphi^M|$   (see \S\ref{S.Formulas}),  
every evaluation functional  has norm $\leq 1$. 
Such functionals are \emph{complete $L$-theories} (see~\S\ref{S.preliminaries}). The space of all complete $L$-theories is denoted   $S_0^L$  
and   equipped with   the topology of pointwise convergence (also known as the     \emph{logic topology} or the    \emph{weak$^*$-topology}).  
 By the compactness theorem for the logic of metric structures,~$S_0^L$ is compact 
  (see~\cite[Theorem~5.8]{BYBHU}). Since $L$ is separable $_0^L$ is metrizable, although 
  it does not have a canonical metric. 

Suppose an  incomplete 
 $L$-theory $\bfT$ is fixed (possibly $\bfT=\emptyset$). 
 Every complete extension $\bfT'$ of $\bfT$ defines a continuous 
 affine functional on $\frW_0(\bfT)$ and hence an element of $S_0^L$. 
 The set of functionals associated to complete extensions of $\bfT$ in this manner
 is, by the compactness theorem for the logic of metric structures, 
  a closed subset of $S_0^L$  (see also \cite[Lemma~6.1.1 and Lemma~6.1.2]{Muenster}).   
Since each functional in this set uniquely determines a complete extension of $\bfT$, 
we identify this compact metrizable space with the space of complete extensions of $\bfT$
and denote it $S_0^L(\bfT)$. We write~$S_0(\bfT)$ whenever $L$ is clear from the context.


An $L$-model $M$ and an $n$-tuple $a_i$, for $i<n$, 
in $M$ define
by interpretation a linear functional $\tp_M(\bar a)$ on $\frW_n(L)$  by 
\[
\tp_M(\bar a)(\varphi(\bar x))=\varphi(\bar a)^M. 
\]
 The 
space $S_n(\bfT)$ of all consistent complete $n$-types is compact in the logic topology, 
and we identify it with the space of (consistent and complete) $n$-types  over $\bfT$. 

The \emph{set of realizations} of $n$-ary type $\bt$ in model $M$ 
is  
\[
\bt(M)=\{\bar b\in M^n: M\models \bt(\bar b)\}.
\]
 Completeness of $\bfT$ implies that for every $M\models \bfT$ 
the 
set 
\[
\{\bt\in S_n(\bfT): \bt\text{ is realized in $M$}\}
\]
is dense in the logic topology on $S_n(\bfT)$.

\subsubsection{A metric on the space of complete $n$-types over a 
complete theory~$\bfT$} 
\label{S.Metric} 
Let $L$ be a countable language, let~$\bfT$ be a complete $L$-theory, and fix~$n\geq 1$. 
Following (\cite[p. 44]{BYBHU}) on the space $S_n(\bfT)$ of complete $n$-types over~$\bfT$  (\S\ref{S.Borel.types})
we define  metric $d=d_{\bfT,n}$ by ($M^n$ is considered with the $\max$-distance)  
\begin{multline*}
d(\bt,\bs)=\inf\{d(\bar a,\bar b): \text{ there exist $M\models T$ and $\bar a$ and $\bar b$ in $M$}\\
\text{ such that 
$M\models \bt(\bar a)$ and $M\models \bs(\bar b)$}\}. 
\end{multline*}
Since both types and $\bfT$ are complete, the triangle inequality is satisfied.
Also, $d(\bt,\bs)=0$ if and only if $\bs(M)=\bt(M)$ for every $M\models \bfT$, in which case $\bs$ and $\bt$ correspond to the same functional on~$\frW_n(\bfT)$. 
This topology is stronger than the compact topology  and 
these topologies roughly correspond
to  the norm and weak topologies in functional  analysis (see however
Lemma~\ref{L.d.Borel}).

Given an $n$-type $\bt$ over a theory $\bfT$ and a new $n$-tuple of constants $\bar c$, 
we let~$\bfT_{\bt/\bar c}$ denote the theory in the language $L\cup \{\bar c\}$ 
obtained by extending~$\bfT$ with axioms asserting that $\bar c$ realizes $\bt$. 
More precisely, one adds all conditions of the form $\varphi(\bar c)=0$ to $\bfT$, 
 where $\varphi(\bar x)=0$ 
is a condition in $\bt$.  If  $\bt$ is complete then 
so is~$\bfT_{\bt/\bar c}$.  One can iterate this definition and name realizations of 
more than one type, as in the following proof. 

\begin{lemma} \label{L.d.Borel}
For every $n$ and  $\e\geq 0$ the  set $\{(\bfr,\bfs): d(\bfr,\bfs)>\e\}\subseteq S_n(\bfT)^2$ is open in the logic topology.  
\end{lemma}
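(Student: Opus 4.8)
The plan is to unwind the definition of the metric $d$ on $S_n(\bfT)$ and show that the complement of $\{d(\bfr,\bfs)>\e\}$, i.e.\ the set $\{d(\bfr,\bfs)\le\e\}$, is closed; equivalently, that if $d(\bfr,\bfs)>\e$ then there is a basic logic-topology neighborhood of $(\bfr,\bfs)$ on which $d$ stays $>\e$. Recall that $d(\bfr,\bfs)\le\e$ means: for every $\delta>0$ there is a model $M\models\bfT$ with tuples $\bar a,\bar b$ realizing $\bfr,\bfs$ respectively and $\max_{i<n} d(a_i,b_i)\le\e+\delta$. The key observation is that this quantity can be read off from a single complete theory. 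Using the device introduced just above the lemma, identify $(\bfr,\bfs)$ with the complete theory $\bfT_{\bfr/\bar c}$ further expanded by naming a realization of $\bfs$ by constants $\bar c'$ (as remarked, one can iterate $\bfT_{\bt/\bar c}$ and name realizations of more than one type). In that expanded complete theory the value $\bigl(\max_{i<n} d(x_i,y_i)\bigr)$ evaluated at $\bar c,\bar c'$ is a well-defined real number, namely $\inf$ over all models realizing both types of $\max_{i<n}d(a_i,b_i)$, which is exactly $d(\bfr,\bfs)$.

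Concretely, I would proceed as follows. First, fix the formula $\phi(\bar x,\bar y):=\max_{i<n} d(x_i,y_i)$, a quantifier-free $L$-formula in $2n$ variables with values in $[0,1]$. Second, note that for complete types $\bfr,\bfs\in S_n(\bfT)$,
\[
d(\bfr,\bfs)=\inf\{\,\phi(\bar a,\bar b)^M : M\models\bfT,\ M\models\bfr(\bar a),\ M\models\bfs(\bar b)\,\}.
\]
Third, observe that this infimum is attained (by compactness: take an ultraproduct/limit of approximate realizations, or pass to a model of the complete theory $\bfT$ expanded by constants $\bar c,\bar c'$ with the conditions of $\bfr,\bfs$ plus the condition $\phi(\bar c,\bar c')\le \inf$), so it equals $\inf$ and in particular $d(\bfr,\bfs)>\e$ iff in \emph{every} model $M\models\bfT$ containing realizations $\bar a$ of $\bfr$ and $\bar b$ of $\bfs$ we have $\phi(\bar a,\bar b)^M>\e$. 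Fourth — and this is the crux — translate $d(\bfr,\bfs)>\e$ into a statement witnessed by finitely much of $\bfr$ and $\bfs$: by compactness there exist finite subsets $\bfr_0\subseteq\bfr$, $\bfs_0\subseteq\bfs$ and some $\e'>\e$ such that \emph{already} $\bfT\cup\bfr_0(\bar c)\cup\bfs_0(\bar c')$ forces $\phi(\bar c,\bar c')\ge\e'$; indeed, otherwise for each finite $\bfr_0,\bfs_0$ and each $k$ there is a model realizing $\bfr_0,\bfs_0$ with $\phi\le\e+1/k$, and a limit of these gives a model realizing all of $\bfr,\bfs$ with $\phi\le\e$, contradicting $d(\bfr,\bfs)>\e$.

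Finally, assemble the neighborhood. Given such finite $\bfr_0=\{\psi_1(\bar x)=0,\dots,\psi_m(\bar x)=0\}$, $\bfs_0=\{\chi_1(\bar y)=0,\dots,\chi_\ell(\bar y)=0\}$ and $\e'>\e$, the set $U$ of pairs $(\bfr',\bfs')\in S_n(\bfT)^2$ such that $\psi_j^{\bfr'}<\eta$ for all $j\le m$ and $\chi_j^{\bfs'}<\eta$ for all $j\le\ell$ is, for $\eta>0$ small enough, a logic-topology-open neighborhood of $(\bfr,\bfs)$; and by a routine compactness argument, choosing $\eta$ small, any $(\bfr',\bfs')\in U$ still satisfies $d(\bfr',\bfs')>\e$ (if not, a model witnessing $d(\bfr',\bfs')\le\e$ with the almost-satisfied conditions, pushed to a limit as $\eta\to0$, would contradict the finitary forcing statement). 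Hence $\{d(\bfr,\bfs)>\e\}$ is open.

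The main obstacle is the fourth step: extracting from the infinitary condition $d(\bfr,\bfs)>\e$ a genuine \emph{finitary} witness with a strict margin $\e'>\e$. This is where the compactness theorem for logic of metric structures (which the paper uses tacitly) does the real work, and one must be a little careful that the approximate nature of satisfaction in continuous logic — conditions $\psi_j<\eta$ rather than $\psi_j=0$ — does not leak across the strict inequality; the margin $\e'>\e$ is exactly the slack that absorbs this, and choosing the neighborhood radius $\eta$ as a function of $\e'-\e$ (and the moduli of uniform continuity of the finitely many formulas involved) closes the argument. The case $\e=0$ is handled identically, noting $d(\bfr,\bfs)>0$ still yields a strict finitary margin.
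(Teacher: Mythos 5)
Your proof is correct and follows essentially the same route as the paper's: you identify $(\bfr,\bfs)$ with the complete theory $\bfT_{\bfr/\bar c,\bfs/\bar c'}$ naming realizations of both types, use compactness to extract a finite set of (approximately satisfied) conditions that already forces $\max_i d(c_i,c'_i)$ bounded away from $\e$, and let that finitary witness cut out a logic-open neighborhood on which $d>\e$. The paper's version is more condensed (it invokes compactness in one line to produce a finite set of \emph{open} conditions proving $d(\bar c,\bar d)>\e$), but your explicit handling of the strict-inequality margin $\e'>\e$ and the shrinking radius $\eta$ is exactly the content being elided there.
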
 

\begin{proof} 
Fix types $\bt$ and $\bs$ such that $d(\bt,\bs)>\e$. 
This is equivalent to stating that 
\[
\bfT_{\bt/\bar c,\bs/\bar d}\models d(\bar c, \bar d)>\e. 
\]
 Then by compactness  there exists a  finite set of open conditions  $\bfT_0$ in~$\bfT_{\bt/\bar c,\bs/\bar d}$
 such that $\bfT_0\models d(\bar c, \bar d)>\e$. 
 This defines a logic open neighbourhood~$U$ of $(\bt,\bs)$ in $S_n(\bfT)$  
such that $d(\bfr,\bfs)>\e$ for all $(\bfr,\bfs)\in U$. 
\end{proof}

\subsubsection{The compact metrizable space $S_n^-(\bfT)$ 
 of incomplete $n$-types over $\bfT$}\label{S.Incomplete}
Fix a countable language $L$, $n\geq 1$ and a
 (not necessarily complete) 
 $L$-theory~$\bfT$.  An $n$-type~$\bt$ in $\bfT$ is a countable set of conditions (\S\ref{S.Conditions}), 
but we can also identify it with the set 
(considering a type as a  set of conditions)
\[
K_{\bt}=\{\bs\in S_n(\bfT): \bt\subseteq \bs\}. 
\]
This set is closed
(and therefore compact) in the logic topology. We claim that every nonempty compact $K\subseteq S_n(\bfT)$ is 
equal to $K_{\bt}$ for some $\bt$.  Fix~$K$; then  
\[
\bt_K=\bigcap_{\bs\in K} \bs 
\]
 is a type that includes $\bfT$. 
If $\bs\in S_n(\bfT)\setminus  K$, then since $K$ is closed 
there exists an open  condition $\varphi(\bar x)<\e$ such that the closed condition 
 $\varphi(\bar x)\leq \e/2$ belongs to $\bs$
but not to any type in $K$. Therefore no type $\bs\notin K$  extends $\bt_K$ and we have
\[
K=\{\bs\in S_n(\bfT): \bs\supseteq \bt_K\}. 
\]
We can therefore  identify the space $S_n^-(\bfT)$ of not necessarily complete types over 
$\bfT$ with  the space of all closed subsets of $S_n(\bfT)$ (the so-called exponential space) 
equipped with its compact metrizable topology
given by the Hausdorff metric. (Note that the empty set corresponds to all inconsistent types extending $\bt$.)

We identify $S_n(\bfT)$ with a subset of $S_n^-(\bfT)$ consisting of all singletons. 
This is a closed subset of $S_n^-(\bfT)$ and the subspace topology agrees with the 
compact metrizable topology on $S_n(\bfT)$. When discussing  complexity of a set of 
types over $\bfT$ we consider the set of all types,  complete and incomplete. 

A straightforward proof of the following lemma is omitted
(see Lemma~\ref{L.pairing} for the definitions of types $\bs\vee \bt$ and $\bs\wedge\bt$). 

\begin{lemma}\label{L.vee}
Suppose $\bfT$ is a theory in a countable language. For all $k$ and $m$
the type pairing maps 
\begin{align*} 
\vee&\colon S_m^-(\bfT)\times S_k^-(\bfT)\to S_{m+k}^-(\bfT)\\
\wedge&\colon S_m^-(\bfT)\times S_k^-(\bfT)\to S_{m+k}^-(\bfT)
\end{align*}  
are continuous. \qed
\end{lemma}

\subsubsection{The compact metrizable space  of pairs $(\bfT,\bt)$}\label{S.Pairs}
Fix a countable language $L$. 
For a fixed $n$ the set of pairs 
 $(\bfT,\bt)$ where $\bfT$ is a (not necessarily complete) $L$-theory and~$\bt$ is a complete type over $\bfT$ 
is endowed with a compact metrizable 
topology as follows. 
We  identify each pair $(\bfT,\bt)$ with the complete theory  $\bfT_{\bt/\bar c}$ (see 
\S\ref{S.Metric}).  Every complete theory in the language obtained by extending the 
language of $\bfT$ by adding constants~$\bar c$ is equal to~$\bfT_{\bt/\bar c}$ for some pair $(\bfT,\bt)$. 
Therefore the set 
\[
\{(\bfT, \bt): \bfT\text{ is a not necessarily complete $L$-theory and } \bt\in S_n(\bfT)\}
\]
 is identified with the compact metrizable space   
 $S_0^{L\cup \{\bar c\}} (\bfT)$ as in 
  \S\ref{S.Borel.types}.

\subsection{Omitting complete types} 

\label{S.Principal} 
Assume $\bfT$ is a complete theory in a countable language
and $\bt$ is a (not necessarily complete) 
 $n$-type in the signature of $\bfT$.  
As in \S\ref{S.Incomplete}, we  identify~$\bt$ with the set $K_{\bt}$ of all complete types extending $\bt$. 
An $n$-type (complete or not) $\bt$ is \emph{principal} (or \emph{isolated}) if 
for every  $\e>0$  the set (using the metric on $S_n(\bfT)$ defined in  \S\ref{S.Metric})
\[
B_\e(\bt)=\{\bs\in S_n(\bfT): \dist(\bs,K_{\bt})<\e\}
\]
is somewhere dense in the logic topology with respect to $\bfT$. 
This is equivalent to the definition of a principal type given  in  \cite[Definition~12.2]{BYBHU}
by   \cite[Proposition~12.5]{BYBHU}. 
 %
%
%
%
%
%
The following is  the  omitting
 types theorem given in~\cite[Theorem~12.6]{BYBHU}  (note that in \cite{BYBHU} all types were assumed to be complete).

\begin{thm} \label{L.triv} Suppose  $\bt$ is a type over a theory $\bfT$ in a countable language. 
\begin{enumerate}
\item If $\bt$  is not principal, then~$\bfT$ has a separable model omitting~$\bfT$. 
\item If $\bt$ is principal and both $\bt$ and $\bfT$ are complete then  $\bt$ is realized in every  $M\models\bfT$. \qed
\end{enumerate}
\end{thm} 


This paper is  about the case not covered by Theorem~\ref{L.triv}: principal, 
but not complete, types over a (possibly complete) theory in a countable language.

\subsubsection{The set of omissible complete types is Borel}

 The set of $n$-types $\bt$ omissible in a model of a theory $\bfT$
  is $\bSigma^1_2$ (see Proposition~\ref{P.Sigma-1-2}).  
We consider the logic topology on the space of all complete theories 
in a fixed countable language $L$ (\S\ref{S.Borel.types}). 
For $n\in \omega$ consider the space of pairs $(\bfT,\bt)$ where $\bfT$ is 
an $L$-theory and $\bt$ is a complete $n$-type over $\bfT$ with respect to the logic topology defined in \S\ref{S.Pairs}.

\begin{prop} \label{P.Borel} 
Fix  $n\in \omega$. 
\begin{enumerate} 
\item The set of all pairs $(\bfT,\bt)$ such that $\bfT$ is a complete theory and  
$\bt$ is a complete $n$-type realized in every model of~$\bfT$ is Borel.  
\item The set of all pairs $(\bfT,\bt)$ such that $\bfT$ is a  theory and  
$\bt$ is a not necessarily complete  $n$-type realized in some  model of~$\bfT$ is closed. 
\end{enumerate}
\end{prop}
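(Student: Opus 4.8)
The plan is to realize both sets as continuous (or at worst Borel) preimages of the set of consistent theories under suitable ``naming'' operations, and then invoke the compactness theorem together with the omitting types theorem to turn the relevant closure conditions into first-order syntactic conditions. For part (2), fix $n$ and recall from \S\ref{S.Borel.types} that the space of consistent $L$-theories is compact in the logic topology and that the map naming a realization of a type, $(\bfT,\bt)\mapsto \bfT_{\bt/\bar c}$, is continuous: a basic logic-open set in the target is determined by finitely many conditions on formulas in $\bbF_0(L\cup\{\bar c\})$, each of which pulls back to a logic-open condition on the pair $(\bfT,\bt)$. Now $\bt$ is realized in \emph{some} model of $\bfT$ if and only if $\bfT_{\bt/\bar c}$ is consistent, and the latter is a closed condition in the logic topology (the space of consistent theories is closed), so the set in (2) is closed, hence Borel. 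I would spell this out using Lemma~\ref{L.CFC} to pass between open and closed conditions, and the completeness theorem of \cite{yaacov2010proof} to identify consistency with the nonexistence of a finite inconsistent subset of conditions.

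For part (1), the key point is that a \emph{complete} type $\bt$ over a \emph{complete} theory $\bfT$ is realized in every model of $\bfT$ precisely when it is principal in the sense of \S\ref{S.Principal}, and—using that both $\bfT$ and $\bt$ are complete—this happens iff the balls $B_\e(\bt)$ can be chosen to form a decreasing chain whose intersection is a single type, which in turn is equivalent to: for every $\e>0$ there is a formula $\phi_\e$ witnessing that $\bt$ is isolated up to $\e$, i.e. some open condition $\psi(\bar x)<1$ that is consistent with $\bfT$ and forces $d(\bar x,\bt)<\e$ in the sense that $\bfT\cup\{\psi(\bar x)<1\}\vdash \chi(\bar x)<\e$ for each condition $\chi(\bar x)=0$ of $\bt$. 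Since $\bt$ is complete we may quantify over a countable dense set of candidate formulas $\psi$ (the space $\bbF_n(L)$ is separable by \S\ref{S.Formulas}), so ``$\bt$ is principal'' becomes a countable conjunction over $\e\in\{1/k:k\geq 1\}$ of a countable disjunction over candidate $\psi$ of a conjunction over conditions of $\bt$ of the statement ``$\bfT_{\bt/\bar c}$-provably $\psi(\bar c)<1$ implies $\chi(\bar c)<\e$''; by compactness each such provability statement is decided by a finite fragment and hence is an open (so Borel) condition on $(\bfT,\bt)$. Assembling these countably many Borel conditions with countable unions and intersections yields a Borel set. The delicate point here is verifying the biconditional ``realized in every model $\iff$ principal'' in the metric setting: one direction is the omitting types theorem of \cite[\S 12]{BYBHU} (non-principal types are omissible), and the other uses completeness of $\bt$ to upgrade the non-nowhere-dense balls into a genuine decreasing chain with a one-point intersection, as sketched in \S\ref{S.Principal}.

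The main obstacle I anticipate is not the logical content but the bookkeeping required to certify Borelness: one must check that each elementary building block—``$\bfT_{\bt/\bar c}$ is consistent'', ``$\bfT_0\vdash \chi(\bar c)<\e$'' for a finite fragment $\bfT_0$, ``$\psi$ lies in a fixed basic logic-open neighbourhood''—is genuinely open or closed in the product of the logic topologies on theories and types, uniformly in the finitely many formulas involved, and that the quantifier over finite fragments $\bfT_0$ and over candidate formulas $\psi$ ranges over a countable index set (so that the resulting Boolean combination stays within the Borel hierarchy rather than escaping to $\bSigma^1_1$). This is exactly the kind of computation carried out in \cite{FaToTo:Descriptive} and its appendix, and in \cite[Proposition~5.1]{FaToTo:Descriptive} for the C*-algebra case; I would model the argument on those, invoking Lemma~\ref{L.Borel.Theory}, Lemma~\ref{L.d.Borel}, and the separability of $\bbF_n(L)$ throughout, and remark that the completeness of $\bfT$ (and of $\bt$ in part (1)) is precisely what prevents the naive $\bSigma^1_2$ description from being optimal.
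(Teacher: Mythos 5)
Your part (2) is essentially the paper's proof: consistency of a type with a theory is a finitary condition by compactness, hence closed (and a fortiori Borel) in the logic topology. Good.

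Part (1), however, has a genuine gap in the syntactic reformulation of principality. You claim that ``$\psi(\bar x)<1$ forces $d(\bar x,\bt)<\e$'' can be rendered as ``$\bfT\cup\{\psi(\bar x)<1\}\vdash \chi(\bar x)<\e$ for each condition $\chi(\bar x)=0$ of $\bt$.'' These are not equivalent. The second condition says that any $\bar a$ satisfying $\psi(\bar a)<1$ approximately satisfies every formula in $\bt$; this is closeness of $\tp(\bar a)$ to $\bt$ in the \emph{logic} topology. The first is closeness of $\bar a$ to the set of \emph{exact} realizations of $\bt$ in the \emph{metric} $d$ of \S\ref{S.Metric}, and going from approximate satisfaction of a type to metric proximity to its realization set is exactly the delicate step that fails for partial types in metric logic --- it is the phenomenon responsible for the failure of the classical omitting types test, which is the starting point of the whole paper. (Compare with the correct syntactic characterization in Theorem~\ref{T.Uniform}(3), which involves conditions of the form $\phi^m_j(\bar x)\geq\delta$, not approximations $\chi(\bar x)<\e$.) For complete types over complete theories the implication you need does hold in one direction via the decreasing chain of balls, as sketched at the end of \S\ref{S.Principal}, but it is not a definitional matter, and your ``in the sense that'' silently substitutes a weaker condition.

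The paper avoids this entirely. After invoking the Omitting Types Theorem to reduce ``realized in every model'' to ``principal,'' it works directly with the metric $d$ on the type space: Lemma~\ref{L.d.Borel} shows $\{(\bfr,\bfs): d(\bfr,\bfs)>\e\}$ is open, so $\{\bs: d(\bt,\bs)\geq\e\}$ is Borel uniformly in $\bt$; second countability of the logic topology then lets one express ``$B_\e(\bt)$ is not nowhere dense'' with countably many quantifications over a fixed countable base, yielding a Borel set. No syntactic rendition of metric proximity is needed. If you want to keep your formula-by-formula approach, you would have to prove a genuine syntactic criterion for metric principality (along the lines of Theorem~\ref{T.Uniform}) rather than asserting one; as written the translation step does not hold.
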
 

\begin{proof} (1) By the Omitting Types Theorem (\cite[\S 12]{BYBHU},  \cite{Ha:Continuous}, or Corollary~\ref{C.Omitting})
type $\bt$ has to be realized in every model of $\bfT$ if and only if it is principal
 (principal types were defined in \S\ref{S.Principal}).

Since the logic topology is second countable, expressing the fact that~$B_\e(\bt)$ 
is nowhere dense requires only quantification over a countable set of open conditions. 
It therefore suffices to show that the set $\{\bs: d(\bt,\bs)\geq \e\}$ is Borel, 
and this follows from Lemma~\ref{L.d.Borel}.  

(2) 
This is a consequence of the  compactness theorem, \cite[Theorem~5.8]{BYBHU}. 
\end{proof} 

The  completeness assumption on types in Proposition~\ref{P.Borel} (1) 
is necessary by Theorem~\ref{T0}, but see also Proposition~\ref{P.UDT-Borel}.

\subsection{A test for elementary equivalence}
We include a general test for elementary equivalence  used only in~\S\ref{S.T1} below. 
A subset $Y$ of a metric space is \emph{$\e$-dense} if for every point $x\in X$ 
there exists $y\in Y$ such that $d(x,y)<\e$. 

\begin{lemma} \label{L.isomorphic}
Assume $A$ and $B$ are $L$-structures such that 
for every finite $L_0\subseteq L$ and every 
$\e>0$ there are $\e$-dense substructures $A_0$ and $B_0$ of $L_0$-reducts 
of $A$ and $B$, 
respectively, which are isomorphic. 
Then $A$ and $B$ are elementarily equivalent. 
\end{lemma}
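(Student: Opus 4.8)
The statement asserts that a ``finitary approximate isomorphism'' condition suffices for elementary
equivalence in logic of metric structures. The plan is to show by induction on the complexity of an
$L$-formula $\phi(\bar x)$ that, under the hypothesis, $\phi^A$ and $\phi^B$ agree; since this is
easier to phrase for sentences, I would instead prove the sharper statement that for every sentence
$\phi$ and every $\delta>0$ there is a finite $L_0\subseteq L$ and an $\e>0$ such that any two
isomorphic $\e$-dense $L_0$-substructures $A_0$, $B_0$ of the $L_0$-reducts of $A$ and $B$ satisfy
$|\phi^{A_0'}-\phi^{B_0'}|<\delta$, where $A_0'$, $B_0'$ are the ambient reducts. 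Combined with the
hypothesis (which furnishes such $A_0\cong B_0$ for every $L_0$ and every $\e$) this gives
$|\phi^A-\phi^B|<\delta$ for all $\delta$, hence $\phi^A=\phi^B$.

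First I would reduce to connectives and quantifiers: the set of formulas $\phi(\bar x)$ for which the
desired approximation property holds is closed under the continuous connectives (uniform continuity of
$u\colon[0,1]^k\to[0,1]$ turns a joint $\delta'$-approximation of the arguments into a
$\delta$-approximation of $u$ applied to them) and under $\sup_x$ and $\inf_x$ (here one uses
$\e$-density: a near-optimal witness in the big structure is approximated within $\e$ by a point of
the dense substructure, and the modulus of uniform continuity of the formula in the variable $x$
controls the resulting error, so choosing $\e$ small enough relative to $\delta$ and the modulus
suffices). The base case is atomic formulas $R(\tau_1(\bar x),\dots,\tau_k(\bar x))$ and
$d(\tau_1(\bar x),\tau_2(\bar x))$: these involve only finitely many function and predicate symbols,
so putting them into $L_0$ and using that $A_0\cong B_0$ are \emph{substructures} (so their
interpretations of those symbols agree with the reducts on $A_0$, $B_0$) handles the case on the
dense subsets exactly, and uniform continuity of the terms and predicates propagates the agreement
from the $\e$-net to all of $A$, $B$ up to an error controlled by $\e$.

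Carrying out this induction cleanly requires threading the quantitative bookkeeping: at each step one
must specify how small $\e$ and how large $L_0$ must be in terms of the target error $\delta$ and the
(finitely many) moduli of uniform continuity encountered so far. For a fixed $\phi$ this is a finite
recursion on the syntax tree, so only finitely many symbols and finitely many modulus constraints
arise, and the hypothesis of the lemma supplies isomorphic $\e$-dense $L_0$-substructures for
\emph{every} choice of the pair $(L_0,\e)$ we end up needing; that is why no uniformity across all
$\phi$ simultaneously is required. The main obstacle is the quantifier step, where one passes from an
approximate equality of values on the dense substructures to one on the whole structures: one has to
be careful that the supremum (or infimum) over the dense set approximates the supremum over the whole
structure, which is precisely where $\e$-density together with the modulus of uniform continuity of
$\phi$ in the quantified variable is used, and where the choice of $\e$ must be made \emph{after} the
inductive hypothesis has fixed the modulus for the subformula. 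Everything else is a routine
$\e$/$\delta$ argument using the standing assumption that all symbols and formulas carry fixed moduli
of uniform continuity.
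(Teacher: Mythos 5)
Your proof is correct in substance, but it follows a different route from the paper's. The paper quotes the fact (\cite[Proposition~6.9]{BYBHU}) that every formula is a uniform limit of formulas in prenex normal form, and then argues only for sentences of the shape $\sup_{x_0}\inf_{x_1}\cdots\sup_{x_{2n-2}}\inf_{x_{2n-1}}\psi(\bar x)$ with $\psi$ quantifier-free. Because $\psi$ carries a single modulus of uniform continuity, the whole argument collapses into one step: choose $L_0$ to contain the symbols of $\phi$, choose $\e$ so that perturbing all variables by $\e$ moves $\psi$ by at most $\delta/2$, and use $\e$-density of the isomorphic substructures to conclude $|\phi^A-\phi^{A_0}|\le\delta/2$ and likewise for $B$. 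You instead perform a genuine structural induction on formula complexity, handling connectives and each quantifier separately, which is also valid and avoids the external appeal to prenex normal form but requires considerably more quantitative bookkeeping.

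One small inaccuracy worth flagging: the ``sharper statement'' you propose to prove by induction is phrased for \emph{sentences}, but that version does not induct — you need the statement at the level of formulas $\phi(\bar x)$ with free variables, asserting (roughly) that for every $\delta>0$ there are $L_0$ and $\e$ so that whenever $A_0\cong B_0$ are $\e$-dense $L_0$-substructures with matching isomorphism $\iota$, one has $|\phi(\bar a)^A-\phi(\iota\bar a)^B|<\delta$ for all tuples $\bar a$ in $A_0$. You clearly intend this (it is what you use in the atomic case and the quantifier step), so this is a matter of stating the induction hypothesis precisely rather than a gap in the argument. With that precision in place, the plan is sound: the quantifier step uses $\e$-density plus the modulus of $\phi$ in the quantified variable to compare $\sup$ over the whole structure with $\sup$ over the $\e$-net, and then the induction hypothesis to transfer between $A_0$ and $B_0$; the order of choices you emphasize ($\e$ chosen after the inner modulus is fixed) is exactly right.
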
 

\begin{proof} 
By  \cite[Proposition~6.9]{BYBHU} 
every formula can be uniformly approximated by formulas in 
 prenex  form. It will therefore suffice to show that every formula in   prenex form has the same value in $A$ and $B$. 
Let $\varphi$ be an $L$-sentence in  prenex  
form,  
\[
\varphi=\adjustlimits \sup_{x_0} \inf_{x_1} \dots \adjustlimits \sup_{x_{2n-2}} \inf_{x_{2n-1}} 
\psi(\bar x),
\]
where $\psi(\bar x)$ is quantifier-free 
and let $L_0\subseteq L$ be a finite subset
consisting only of symbols that appear in $\psi$. 
For $\delta>0$ fix $\e>0$ small enough so that 
perturbing  variables in $\bar x$ by $\leq \e$ 
does not change the value of $\psi(\bar x)$ by more than $\delta/2$. 
Let $A_0$ and $B_0$ be isomorphic $\e$-dense substructures of reducts of $A$ and $B$, respectively.   
Then 
\[
|\varphi^A-\varphi^{A_0}|\leq \delta/2
\qquad \text{and}
\qquad
|\varphi^B-\varphi^{B_0}|\leq \delta/2. 
\]
Since $A_0\cong B_0$ we have $\varphi^{A_0}=\varphi^{A_0}$ and $|\varphi^A-\varphi^B|\leq \delta$. 
Since $\delta>0$ was arbitrary, we conclude 
that $\varphi^A=\varphi^B$. Since $\varphi$ was arbitrary  $A$ and $B$ are 
elementarily equivalent. 
\end{proof} 

%

\subsection{A simple fact about definable sets} 
\label{S.Definable} 
We include a brief discussion of definable sets 
as a response to the anonymous referee's request. 
Let $L$ be a language and let $M$ be an $L$-structure. 
Following 
\cite[Definition~9.1]{BYBHU} (see also \cite[\S 3.1]{Muenster}) 
we say that a predicate $R\colon M\to [0,1]$ is \emph{definable} 
if it  
can be uniformly approximated by $L$-formulas. 
More precisely, for every $\e>0$ there is an $L$-formula $\varphi(x)$ 
such that 
$\sup_{x\in M} |R(x)-\varphi(x)^M|\leq \e$. 
 A closed subset~$Z$ of  $M$  is \emph{definable} 
(\cite[Definition~9.16]{BYBHU}, also \cite[\S 3.2]{Muenster}) 
 if the associated 
distance predicate   
\[
\dist(x,Z)=\inf_{z\in Z} d(x,z)
\]
is definable. 
In the case when $M$ is a multi-sorted structure and $Z$ is a subset of one of its sorts, $S$, 
one requires only $\sup_{x\in S^M}|\dist(x,Z)-\varphi(x)|\leq \e$. 
By \cite[Theorem~9.17]{BYBHU} (see also 
\cite[Theorem~3.2.2]{Muenster} for the treatment of
 sets definable in a not necessarily complete theory) a set is definable if and only if 
for every $L$-formula $\psi(\bar y,x)$ the predicates 
\[
\sup_{x\in Z} \psi(\bar y,x)\text{ and } 
\inf_{x\in Z} \psi(\bar y,x)
\]
are definable. In other words, expanding the definition of an $L$-formula by allowing 
quantification over definable sets results in a conservative extension of the language $L$. 
(An extensive discussion of definability in  multi-sorted and unbounded case,  imaginaries, $\bfT^{\eq}$, and $A^{\eq}$ in the logic of metric structures 
can be found in \cite[\S 3.1--3.3]{Muenster}.)
We state and prove the single-sorted, single variable version of an (undoubtedly 
 well-known) lemma that will be used in \S\ref{S.T1}.

\begin{lemma} \label{L.Definable}
Suppose that $M$ is a metric $L$-structure with a sort $S$ and that $Z\subseteq S^M$
is the zero set of some formula $\varphi(x)$ such that 0 is an isolated point of the set $Y=\{\varphi(a): a\in S^M\}$. 
Then both $Z$ and $S\setminus Z$ are definable. 
\end{lemma} 

\begin{proof} Let $f\colon [0,1]\to [0,1]$ be any  continuous function such that $f(0)=0$ and $f(t)=1$
for $t\in Y\setminus\{0\}$. Then $f(\varphi(a))^M$ is equal to 
$0$ if $a\in Z$
and it is equal to $1$ if $a\in S^M\setminus Z$. 
The formulas
\begin{align*} 
\psi_0(x)&=\inf_z \min(1,f(\varphi(z))+d(x,z)),\\
\psi_1(x)&=\inf_z \min(1,(1-f(\varphi(z)))+d(x,z))
\end{align*} 
 satisfy $\psi_0(a)^M=\dist(a,Z)$ and $\psi_1(a)^M=\dist(a, M\setminus Z)$ for all $a\in S^M$, 
 showing that both $Z$ and $S\setminus Z$ are definable. 
\end{proof} 

\section{Trees} \label{S.Trees} 

The category of trees of height $\omega$ with order-preserving and level-preserving maps 
is not axiomatizable in classical logic. We will see that it is axiomatizable in logic of metric structure. 
The examples in \S\ref{S.Difficult} of theories with sets of types of high complexity 
and the examples in \S\ref{S.T1} of complete theories with omissible, but not simultaneously omissible, 
types with be expansions of the theory of trees. The axiomatizability result (Theorem~\ref{T.T.Ax}) will not be used directly 
in other proofs.

\subsection{The Theory of the Baire space} \label{S.Baire} 
Let $L_{\cN}$ be a language with a single sort~$D_1$. 
 The intended interpretation of $D_1$  is $\olo\sqcup \oo$. 
  Language $L_{\cN}$  is  equipped with the following. 
\begin{enumerate}
\item  \label{I.L.1} Constant symbols for all elements of $\olo$ (we shall identify 
$t\in \olo$ with the corresponding constant). 
\item \label{I.L.2} Unary function symbols $f_k$ for $k\in \omega$. 
\pushcounter
\end{enumerate}
Interpretations of  each $f_k$  is required to be  1-Lipshitz. 
Theory $\bfT_{\cN}$ is the theory of the $L_{\cN}$-model $\cN$ whose  universe is  
$\olo\sqcup \oo$, 
described as follows. 

The length of  $s\in \olo$ will be denoted by $|s|$, hence $\dom(s)=|s|$. 
The metric on $\olo\sqcup \oo$ is the standard Baire space metric
(with $\Delta(s,t)=\min(|s|,|t|, \{n: s(n)\neq t(n)\}$ for $s\neq t$)
 \[
 d(s,t)=1/(\Delta(s,t)+1). 
 \]
  If $s\sqsubseteq t$ then $d(s,t)=1/(|s|+1)$. 
  For $s\in \olo$  and $k\leq |s|$ we denote 
  the initial segment of $s$ with length $k$ by $s\rs k$. 
 For $k\in \omega$ the function $f_k$ is interpreted as 
 \[
  f_k(s)=\begin{cases} 
  s\rs k& \text{ if }k\leq |s|\\
   s&\text{  if }k>|s|. 
\end{cases} 
\]
 Clearly $\olo$ is a dense subset of $D_1^M$ which is closed under all $f_k$. 

 Since $\cN$ has a  subset consisting of  elements that are interpretations of constant symbols, 
 it is an atomic model of $\bfT_{\cN}$ and  
  every~$N\models\bfT_{\cN}$ has an elementary submodel isometrically isomorphic to $\cN$.  

\subsection{Axiomatizability and $L_{\cN}^-$}  \label{S.WF} 
\label{L.CN-}
A class of metric structures is axiomatizable if it is the class of all models of a first-order theory $\bfT$, and 
a category is axiomatizable if it is equivalent to an axiomatizable class of metric structures. 

\begin{thm} \label{T.T.Ax} The category of  trees of height $\leq \omega$ with respect to the order-preserving and level-preserving morphisms
is axiomatizable. 
\end{thm} 
  
Let $L_{\cN}^-$ be the reduct of~$L_{\cN}$  
with sort $D_1$ and functions $f_k$ for $k\in \omega$, but without constant 
symbols for the elements of $\olo$. Recall that the height of a tree $T$ is the least ordinal 
$\alpha$ such that the $\alpha$th level of $T$ is empty. 
Suppose $T$ is a tree of height $\omega$. It is isomorphic to a tree of finite sequences of 
elements of a large enough set with respect to the end-extension ordering. 
An $L_{\cN}^-$ structure is $\cM_T$ associated to $T$ as in  \S\ref{S.Baire}.  
One defines distance $d(s,t)=1/(\Delta(s,t)+1)$ for distinct elements $s$ and $t$ in~$T$, and 
the interpretation of $f_k$ is defined so that $f_k(s)=s$ if $|s|\leq k$ 
and $f_k(s)=s\rs k$ if  $|s|>k$. 
The $d$-completion of~$T$  is an 
$L_{\cN}^-$-structure  whose domain is naturally identified with the union of~$T$ and the set of its infinite branches.

\begin{proof}[Proof of Theorem~\ref{T.T.Ax}]
We have already associated metric structure $\cM_T$ to a tree $T$. 
To prove that the class $\{\cM_T: T$ is a tree of height $\leq\omega\}$ is axiomatizable, we use a well-known result  (see e.g. \cite[Theorem~2.4.1]{Muenster}):  
 A category of metric structures is axiomatizable if it is  
 closed under the isomorphisms, ultraproducts, and elementary submodels. 
Only the closure under ultraproducts requires some argument.

Suppose $T(j)$, for $j\in J$, is a family of trees and $\cV$ is an ultrafilter on~$J$. 
With $M_j=\cM_{T(j)}$ we need to find a tree $T$ such that $M=\prod_{\cV} M_j$ is naturally isomorphic to $\cM_T$.  
For $m\in \omega$ let $T_m$ be the $\cV$-ultraproduct of $m$-th levels $T(j)_m$ of $T(j)$.  
The elements of $T_m$ are the equivalence classes of functions $g\in \prod_j  T(j)_m$
modulo the $\cV$-almost everywhere equality. Let $T$ be the tree whose $m$-th level is $T_m$ with the natural ordering. 
It is a routine to check that $\cM_T\cong \prod_{\cV} M_j$.  
 
By \cite[Theorem~2.4.1]{Muenster},   there exists a theory $\bfTT$ such that $M\models \bfTT$ if and only if $M\cong \cM_T$ for some tree $T$.

It remains to prove that the category of models of $\bfTT$ is equivalent to the category of trees of height $\leq\omega$. 
 Since $T$ is identified with a dense subset of $\cM_T$, 
   an order-preserving and  level-preserving $f\colon T\to S$ 
defines a morphism from $\cM_T$ to $\cM_S$. We therefore have a functor that associates metric structure $\cM_T$ 
to a tree $T$. 

In order to find the inverse functor, fix $M\models \bfTT$. 
Then $M$ 
 has an  $F_\sigma$ subset
\[
M=\{a\in N: f_k(a)=a\text{ for some }k\}. 
\]
This is a dense subset of   $M$ since $d(a, f_n(a))\leq 1/(n+1)$ for all $a$. 
With the ordering defined by $a\sqsubseteq b$ if and only if $a=f_k(b)$ for some $k$, $T_M$ is a tree of height $\omega$. 
 Since $T_M$ is a dense subset of $M$ and $\cM_T$ has $T$ as a dense subset,~$T_{\cM_T}$ is naturally isomorphic to $T$ and  $\cM_{T_M}$ is naturally isomorphic to $M$. 
This completes the proof. 
\end{proof}

This subsection concludes with some straightforward observations on models of $\bfTT$ and 
an introduction of terminology. If $N\models \bfTT$, the
  elements of $N\setminus T_N$ are in a natural bijective correspondence to 
   the branches of tree~$T_N$, 
 because $f_k(x)=f_k(y)$ for all $k$ implies $d(x,y)=0$ and therefore
 $x=y$.
Thus any  $N\models\bfTT$ has at most 
two kinds of elements, 
\emph{nodes} (the elements of~$T_N$) and \emph{branches} (the elements of $N\setminus T_N$). 
The nodes of $N$ are the ranges of  interpretations of functions
 $f_m$, for $m\in \omega$. 
If $f_m(a)=a$ then  $d(a,b)<1/(m+1)$ implies $a=b$, and therefore nodes form a discrete subset of $N$. 
The \emph{height} of a node $a\in N$,  denoted $|a|$, 
  is the least $m$ such that $f_{m}(a)=a$.

All non-nodes of $N$ satisfy  $d(f_m(a), a)=1/(m+1)$ for all $m$. 
These are the \emph{branches} of $N$. 
If  $N\models \bfTT$ then $T_N$ 
is a tree and 
we naturally extend  the notation $|t|$ and~$\sqsubseteq$
to elements of $T_N$. This applies to expansions of models of $\bfT_{\cN}$ and $\bfTT$  used in 
\S\ref{S.Difficult} and 
\S\ref{S.T1}.

\subsection{Some definable sets} \label{S.NodesEtc}

The results of the present subsection,  that some natural sets are definable in 
 models of $\bfTT$, will be used in \S\ref{S.M4} and \S\ref{S.ProofT2}. 
The following lemma was added because of a request of the 
 anonymous referee. 

\begin{lemma} \label{L.Def.Nodes} Suppose 
$N\models \bfTT$
and $m\geq 1$. 
\begin{enumerate}
\item \label{I.Def.2} Both the set of all nodes of height $>m$ 
and the set of all nodes of height $\leq m$ are definable subsets of $N$.  
\item \label{I.Def.1} The set of all nodes of height $m$ (i.e. the $m$th \emph{level} of $N$) 
is a definable subset of $N$. 

\item \label{I.Def.3} For a node $a$ in $N$ and $k>m=|a|$ the set 
\[
\Succ_k(a)=\{b\in N: f_m(b)=a\text{ and } |b|=k\}
\]
is  a definable (from parameter $a$) subset of $N$. 

\item \label{I.Def.4} For $m$ and $n$ in $\omega$ the set 
\[
N_{m,n}=\{a\in N: |a|=m, \rho_N(a)\geq n+1\}
\]
 is a definable subset of $N$.   
\pushcounter
\end{enumerate}
\end{lemma} 

\begin{proof} If $T$ is any tree of height $\omega$, for every  node $s$ of $T$ we have $f_m(s)=s$ if $|s|\leq m$
and $d(s,f_m(s))=1/(m+1)$ if  $|s|>m$.  
Therefore  $d(s,f_m(s))<1/(m+1)$ is equivalent to   
 $d(s,f_m(s))=0$, and  
\eqref{I.Def.2}  follows by Lemma~\ref{L.Definable}.

\eqref{I.Def.1} Let 
\[
\varphi_m(x)=d(f_m(x), x)+\left|\frac 1{m} \dminus d(f_{m-1}(x),x)\right|.  
\] 
Then $\varphi_m(a)^N=0$ if and only if $f_m(a)=a$ and $d(f_{m-1}(a),a)\geq 1/m$. 
But $f_m(a)=a$ if and only if $|a|\leq m$ and $d(f_{m-1}(a),a)\geq 1/m$ if and only if $|a|\geq m$. 
  Therefore  the $m$th level of $N$ is the zero-set of $\varphi_m$. Since 
   $\varphi_m(b)^N<1/(m+1)$
 implies $\varphi_m(b)^N=0$,  by Lemma~\ref{L.Definable}
 the $m$th level of $N$ is definable.

\eqref{I.Def.3}  For $m<k$ let  $\psi_{m,k}(x,y)=d(f_m(x),y)+\varphi_{k}(x)$. 
Then $\Succ_k(a)$ is the zero-set of $\psi_{m,k}(x,a)$ and 
$\psi_{m,k}(x,a)<1/(k+1)$ implies $\psi_{m,k}(x,a)=0$. 
Therefore $\Succ_k(a)$ is definable from parameter $a$ by Lemma~\ref{L.Definable}. 

\eqref{I.Def.4} Let   $\varphi_m$ and $\psi_{m,n}$  be as in the earlier part of this proof. Then  
$N_{m,n}$ is the zero-set of $\varphi_m(x)+\inf_y \psi_{m,n}(y, x)$, 
and it  is definable by Lemma~\ref{L.Definable}. 
 \end{proof}

\begin{remark} To the best of our knowledge, it is not known whether the intersection of two definable sets in a metric structure is necessarily definable.
However, the definable sets $X$
that appear in Lemma~\ref{L.Def.Nodes} are `strongly definable' 
in the sense that both $X$ and its complement $X^\complement$ are closed, and 
$\dist(X,X^\complement)>0$. The intersection of `strongly definable' sets is `strongly definable.'   
\end{remark} 

Lemma~\ref{L.Def.Nodes} shows that, to some extent, the  models of $\bfTT$  behave as
discrete first-order structures. The following lemma is an example of this.

\begin{lemma} \label{L.counting} 
The following property, 
 depending on $m,n,k,l$ in $\omega$ such that $k<n$, 
  of trees   is elementary: 
 \begin{quote} 
 Every node  $s$ of height $m$  and rank at least $n+1$ 
 has at least~$l$ distinct immediate successors  of rank  $k$. 
  \end{quote} 
\end{lemma} 

\begin{proof} Fix $N\models\bfTT$. 
Lemma~\ref{L.Def.Nodes} implies that   
   both the $m$-th level of~$T_N$ and the set of nodes of rank $k$ are definable.
   Also, the distance of any two distinct nodes on the $m$th level is at least $1/(m+1)$
   and the assertion that such $1/(m+1)$-discrete set $X$ has at least $l$ elements can be expressed  as follows: 
\[
   \inf_{y_0\in X,y_1\in X,\dots, y_{l-1}\in X} \frac 1{m+1}\dminus \min_{j<k<l}\d(y_j,y_k). 
\]
Using our assumption on $X$, the value of 
this sentence is 0 if and only if $X$ contains at least $l$ elements. 
\end{proof} 

\section{Complexity of spaces of types} 
\label{S.Difficult} 

 From now on, \emph{all types are assumed to be partial and consistent}. 
Let us briefly recall  definitions of pointclasses in  
  projective hierarchy. 
A subset of a Polish space~$X$ is~$\bDelta^1_1$ if it is Borel (this is Souslin's Theorem, 
\cite[14.11]{Ke:Classical}). 
The images
of $\bDelta^1_1$ sets under Borel-measurable functions are $\bSigma^1_1$ (or analytic) sets, and their complements
are~$\bPi^1_1$ sets. 
For $n\geq 2$ one defines $\bSigma^1_n$ and $\bPi^1_n$ 
subsets of $X$ by recursion. Continuous images of $\bPi^1_n$  sets 
are $\bSigma^1_{n+1}$ sets and their complements are $\bPi^1_{n+1}$ sets. 
This is a proper hierarchy and for every $n$ the  pointclasses
$\bSigma^1_{n+1}\setminus \bSigma^1_n$, $\bSigma^1_{n+1}\setminus \bPi^1_{n+1}$, 
$\bPi^1_{n+1}\setminus \bSigma^1_{n+1}$, 
and $\bPi^1_{n+1}\setminus \bPi^1_n$ are nonempty. 
If $\Gamma$ is a pointclass then a subset $Z$ of a Polish space is \emph{$\Gamma$-hard} 
if every set in $\Gamma$ is a preimage of  $Z$ by a continuous function. 
If a $\Gamma$-hard set $Z$ itself belongs to $\Gamma$ then it is said to be \emph{$\Gamma$-complete}. 
See \cite{Ke:Classical} for more information.

 Whenever we say that a type $\bt$ is omissible in a model of  $\bfT$ it is assumed 
 to be  consistent with $\bfT$.  

\subsection{Simple complexity results} 

\label{S.Basic.Complexity}

Recall that the space $S_n(\bfT)$ of complete $n$-types over a complete theory $\bfT$ is a compact 
metric space (\S\ref{S.Borel.types}) 
and that the space $S_n^-(\bfT)$ of not necessarily complete $n$-types over $\bfT$ is identified with the
compact metrizable space of its closed subsets (\S\ref{S.Incomplete}). 

\begin{lemma} \label{L.Omissible} \label{L.Sigma11} 
Suppose $L$ is a countable language,  $M$ is a separable $L$-structure, and $n\in \omega$. 
 \begin{enumerate}
 \item The set of all 
complete $n$-types realized in $M$ is $\bSigma^1_1$. 
\item The set of all not necessarily complete $n$-types realized in $M$ is~$\bSigma^1_1$ 
\end{enumerate} 
\end{lemma}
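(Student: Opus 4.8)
The plan is to express ``$\bt$ is realized in $M$'' as a statement whose only unbounded quantifiers are existential over Polish spaces, so that the defining set is $\bSigma^1_1$.

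\medskip

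\noindent\textbf{Setup and coding.}
First I would fix Borel (indeed Polish) parameterizations of the relevant objects. Recall from \S\ref{S.Borel.types} that $S_n(\bfT)$ is compact metrizable and from \S\ref{S.Incomplete} that $S_n^-(\bfT)$ is identified with the exponential (hyperspace) of closed subsets of $S_n(\bfT)$, with its compact Hausdorff-metric topology; this takes care of the parameter spaces in both items. Since $M$ is a \emph{fixed} separable structure in a separable language, fix once and for all a countable dense subset $D=\{a_k : k\in\omega\}$ of $M^n$ (or rather of $M$, and let $D^n$ enumerate the $n$-tuples from it); for each formula $\phi$ in the separable space $\bbF_n(L)$ and each $k$, the real number $\phi(a_k)^M$ is a fixed real, and the map $(\phi,k)\mapsto\phi(a_k)^M$ is continuous in $\phi$ for fixed $k$ (it is $1$-Lipschitz for $\|\cdot\|_\infty$) and hence Borel; this is the only ``information about $M$'' we need, and it is absorbed into the definition as a fixed Borel parameter, not a quantifier.

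\medskip

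\noindent\textbf{The $\bSigma^1_1$ definition.}
For (1): a complete $n$-type $\bt\in S_n(\bfT)$ is realized in $M$ if and only if there is a point $\bar a\in M^n$ with $\phi(\bar a)^M=0$ for every condition $\phi=0$ in $\bt$. Using density of $D$ and uniform continuity, this is equivalent to: there is a sequence $(k_j)_{j\in\omega}\in\omega^\omega$ such that the tuples $a_{k_j}$ form a Cauchy sequence (with, say, $d(a_{k_j},a_{k_{j+1}})\le 2^{-j}$) and such that for every condition $\phi=0$ in $\bt$ and every $\e>0$ we have $|\phi(a_{k_j})^M|<\e$ for all large $j$. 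More cleanly, I would invoke Lemma~\ref{L1} / the type $\bt_\omega$ of \S\ref{S.pomega}: $M$ realizes $\bt$ iff the countable structure $D$ contains a sequence realizing $\bt_\omega$, which amounts to the existence of $f\in\omega^\omega$ such that for every $m$, $D\models\bt_m(a_{f(0)},\dots,a_{f(m-1)})$. The latter ``for every $m$'' is a quantifier over a countable set applied to a Borel (in fact clopen-in-the-parameters, once $\bt$ is coded) matrix condition, so the whole statement is $\exists f\in\omega^\omega\,(\text{Borel})$, i.e.\ $\bSigma^1_1$. This handles (1).

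For (2) the argument is literally the same: a not-necessarily-complete type $\bt\in S_n^-(\bfT)$ is realized in $M$ iff $\exists f\in\omega^\omega$ with $D\models(\bt_\bt)_m$ at $(a_{f(0)},\dots,a_{f(m-1)})$ for all $m$, where $\bt_\bt$ is the associated $\omega$-type from \S\ref{S.pomega} applied to the partial type $\bt$ (the construction there does not use completeness of $\bt$). The only extra point is that the conditions of $\bt$ must be read off Borel-measurably from the code of $\bt\in S_n^-(\bfT)$ as a closed subset of $S_n(\bfT)$; this is routine since a closed set is realized by a tuple iff \emph{some} complete type in it is, equivalently iff every finite conjunction of conditions ``in'' the closed set (i.e.\ conditions $\phi<\e$ failing on the whole closed set give the generating closed conditions, as in \S\ref{S.Incomplete}) is approximately satisfied. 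So again we get $\exists f\,(\text{Borel})$.

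\medskip

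\noindent\textbf{Main obstacle.}
The one place needing genuine care — the ``hard part'' — is verifying that the matrix relation, namely ``the $n$-tuple $(a_{f(0)},\dots,a_{f(m-1)})$ from the fixed dense set approximately satisfies the first $m$ conditions of the type coded by the parameter'', is Borel jointly in the type-parameter and in $f$. For item (1) with $\bt\in S_n(\bfT)$ this is immediate because a complete type is determined by the continuous functional $\phi\mapsto\phi(\bar x)^{\bt}$, so the condition ``$|\phi_j(a_{f(j)\dots})^M|<\e$'' is open in the obvious way once we fix an enumeration $\{\phi_j\}$ of a countable dense subset of $\bbF_n(L)$. For item (2) one must instead check that, given a closed set $K\subseteq S_n(\bfT)$ in the Hausdorff topology, the predicate ``$(a_{k_0},\dots,a_{k_{n-1}})$ $\e$-realizes $\bt_K$'' is Borel in $K$; this follows from the standard fact that the relation $\{(K,\bs): \bs\in K\}$ on (hyperspace)$\times S_n(\bfT)$ is Borel (indeed closed), combined with continuity of evaluation. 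Granting this, no unbounded quantifier other than the existential $\exists f\in\omega^\omega$ appears, and both sets are $\bSigma^1_1$ as claimed.
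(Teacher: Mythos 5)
Your proof is correct and takes essentially the same approach as the paper: for (1) the paper defines a Borel map from $\hM(L)\times(\oo)^n$ to $S_n(\bfT)\cup\{*\}$ sending a Cauchy sequence drawn from the dense coding set to the type of its limit and observes that the set of realized types is the image of a fiber, which is exactly your $\exists f\in\oo\,(\text{Borel})$ formulation; for (2) the paper likewise passes through the closedness of the membership relation $\{(\bs,K):\bs\in K\}$ (it states the dual $\bPi^1_1$ form --- that $\{K:K\subseteq A\}$ is $\bPi^1_1$ where $A$ is the set of omitted complete types --- which is equivalent to your direct $\bSigma^1_1$ statement that $\{K:K\cap R\neq\emptyset\}$ is $\bSigma^1_1$ for $R$ the realized types). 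The one stylistic remark: in (2) you do not actually need to extract a countable enumeration of conditions of $\bt_K$ Borel-measurably and run the $\bt_\omega$ construction on it --- the cleaner route you mention at the end (reduce to (1) plus closedness of membership) suffices and is what the paper does; the sentence about ``every finite conjunction approximately satisfied'' conflates consistency with realization and should be dropped.
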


\begin{proof} (1) 
Let $a(M,j)$, for $j\in \omega$, be an enumeration of a countable subset of 
 the  universe of $M$. 
 Define $f\colon   (\oo)^n\to S_n(\bfT)\cup \{*\}$ by 
 setting 
 $f(x(0), \dots, x(n-1))$ to be  
  the  type of the limit of $x(0)(j), \dotsm x(n-1)(j)$, for $j\in \omega$, in $M$
  if this limit exists in $M$ and to  $*$
  if the sequence $\{a(M,x(i)(j)): j\in \omega\}$ is not Cauchy for some $i<n$.  
   This function is Borel and the image of $(\oo)^n$ is the set of $n$-types realized in $M$. 

(2) By (1), the set $A\subseteq S_n(\bfT)$ of all complete 
$n$-types omitted in $M$ is~$\bPi^1_1$. 
We need to show that the set $\{K\in S_n^-(\bfT): K\subseteq A\}$ is also $\bPi^1_1$. 
This is standard but we include an argument for the convenience of the reader. The set
\[
Z=\{(x,K)\in S_n(\bfT)\times S_n^-(\bfT): x\in K\}
\]
is closed and  $K\subseteq A$ if and only if $(\forall x)((x,K)\in Z\rightarrow x\in A)$, 
giving the required $\bPi^1_1$ definition.  
%
 \end{proof}

 The set of complete 
 $n$-types realized in a  
  model $M$ of $\bfT$  is always dense in the logic topology on $S_n(\bfT)$ 
   and therefore if~$M$ does not realize all types then 
 the set of complete types realized in $M$ is not closed in logic topology.  
 A model~$M$ of a complete theory $\bfT$ is \emph{atomic} if the set of realizations
of principal types is dense in~$M^n$ for every $n\geq 1$
 (see \cite[p. 79]{BYBHU}). This is equivalent 
to every tuple of~$M^n$ having a principal type. 
Compare the following with Proposition~\ref{P.Borel}.  
    
  \begin{prop} \label{P.Sigma-1-2}  \label{C.Prime} 
Fix $n\geq 1$ and a countable language $L$.  
\begin{enumerate}
  \item 
  If $\bfT$ is a complete $L$-theory with an atomic
   model then the set of 
   (not necessarily complete) 
  $n$-types omissible in a model of $\bfT$ is $\bPi^1_1$. 
\item If $\bfT$ is a (not necessarily complete) $L$-theory then the set of all 
(not necessarily complete) 
$n$-types omissible in a model of $\bfT$ is $\mathbf \bSigma^1_2$. 
\end{enumerate}
\end{prop}
  
  \begin{proof} 
  Suppose that   $\bfT$  
  has an atomic model.  Then it has a separable atomic model and this model
  is an elementary submodel of every other model of $\bfT$ (i.e. it is a \emph{prime} model of $\bfT$). 
  This implies that a type is omissible in a model of $\bfT$ if and only if it is omitted in its atomic model.   Together with   (2) of Lemma~\ref{L.Omissible} this implies (1). 
  
  (2) By the proof  of Lemma~\ref{L.Omissible}
the set 
   \[
   \{(M,\bt): M\models \bfT, \bt\in S_n^-(\bfT),\text{  and $M$ omits }\bt\}
   \]
    is $\bPi^1_1$
   and the set of all types omitted in some model of $\bfT$ is clearly its continuous 
   image. 
   \end{proof}  

Proposition~\ref{P.Sigma-1-2} (1) is sharp by
Corollary~\ref{C-1-1} below. 
  The following related result  
was inspired by \cite{Bice:Brief}. 

\begin{prop} \label{P2+} There are a countable language $L$ and a separable $L$-structure~$M$ 
such that the set of quantifier-free unary types realized in $M$ is 
a complete $\bSigma^1_1$ set. 
\end{prop}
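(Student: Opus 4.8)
The plan is to encode an arbitrary $\bSigma^1_1$ set as the set of quantifier-free unary types realized in a single separable metric structure, using the tree characterization of analytic sets together with the tree $T_{D,\bt}$ from Lemma~\ref{L1} and the discussion following it. Recall that a set $A\subseteq\oo$ is $\bSigma^1_1$ iff there is a Borel (equivalently, for our purposes, a recursively presented, but Borel suffices) set $B\subseteq\oo\times\oo$ with $A=\{x:\exists y\,(x,y)\in B\}$, and in turn $A$ is the projection of the set of branches of a tree $S$ on $\omega\times\omega$; so $x\in A$ iff the section tree $S_x=\{s\in\omega^{<\omega}:(x\rs|s|,s)\in S\}$ is ill-founded. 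The idea is to build $M$ so that a unary quantifier-free type $\bt_x$ ``named'' by $x$ is realized in $M$ precisely when $S_x$ has an infinite branch.

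First I would fix the language $L$ to consist of countably many unary predicates (or a single binary predicate, or countably many constants together with the metric) sufficient to ``read off'' from the quantifier-free type of a point $a$ a real $x(a)\in\oo$: e.g.\ predicates $P_n^k$ so that the value pattern $\{P_n^k(a)^M\}$ determines the value $x(a)(n)$. The carrier of $M$ will be (the completion of) a disjoint union, over all finite sequences $s\in\omega^{<\omega}$, of small clusters of points, arranged so that there is a sequence $a_0,a_1,\dots$ with $d(a_j,a_{j+1})\le 2^{-j}$ (as in $(\bt_\omega 2)$) converging to a limit realizing a target type exactly when the sequence of $s$'s traces an infinite branch through some $S_x$. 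Concretely, for each node $t$ of the full tree $\omega^{<\omega}$ I would put a point $b_t$ whose quantifier-free type encodes (i) a finite partial real, namely $x\rs|t|$ where $x$ is determined coordinatewise along the branch, and (ii) enough bookkeeping that the metric relations $d(b_t,b_{t'})$ are small exactly when $t'$ extends $t$ by one, and only when the extension stays in a ``live'' part of the tree. A point at infinite distance-data that realizes a ``full'' unary type $\bt_x$ then exists in the completion iff there is an infinite branch $b_{t_0},b_{t_1},\dots$ with matching real $x$, i.e.\ iff $x\in A$. Using the finer statement after Lemma~\ref{L1}: $M$ realizes $\bt_x$ iff the tree $T_{D,\bt_x}$ (for $D$ the chosen dense set) is ill-founded, and I would arrange a definable bijection between $T_{D,\bt_x}$ and $S_x$, so realizability of $\bt_x$ in $M$ is equivalent to $x\in A$.

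The key steps, in order, are: (1) choose a universal $\bSigma^1_1$ set $A\subseteq\oo$ and a tree $S$ on $\omega\times\omega$ projecting to it; (2) describe the quantifier-free ``coding scheme'': a map $x\mapsto\bt_x$ from $\oo$ into quantifier-free unary types such that $x\mapsto\bt_x$ is continuous (hence Borel) and injective, so that ``$\{\bt_x:x\in A\}$ is $\bSigma^1_1$-complete'' reduces to ``$A$ is $\bSigma^1_1$-complete'' by the continuous reduction $x\mapsto\bt_x$; (3) construct the countable premetric structure on $\{b_t:t\in\omega^{<\omega}\}$ (plus auxiliary points), verify it satisfies the closed conditions (i)--(ii) defining membership in $\hM(L)$ — i.e.\ $\gamma$ is a metric and respects the (here trivial, predicates-only) moduli of uniform continuity; (4) take the completion $M=M(\gamma)$ and show: a point realizing $\bt_x$ is in $M$ iff there is a Cauchy sequence through the $b_t$'s witnessing an infinite branch of $S_x$, i.e.\ iff $x\in A$; (5) conclude that the set of quantifier-free unary types realized in $M$ is the continuous image described in Lemma~\ref{L.Sigma11}(2) restricted to quantifier-free types — hence $\bSigma^1_1$ — and is $\bSigma^1_1$-hard by the reduction in (2). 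Completeness of the type class in the sense of $\bSigma^1_1$-complete then follows.

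The main obstacle will be step (3)–(4): making the metric genuinely a metric (the triangle inequality across different branches of the tree must be controlled — this forces the clusters at ``incomparable'' nodes to be kept uniformly far apart, say at distance close to the diameter $1$, while comparable nodes at level $j$ are within $2^{-j}$) and simultaneously ensuring that \emph{only} honest infinite branches of $S_x$ give rise to a realized type, with no spurious realizations coming from Cauchy sequences that ``jump'' between incomparable nodes or from limits of the auxiliary coding points. A clean way to prevent spurious realizations is to make the predicate values $P_n^k(b_t)^M$ depend only on the $n$-th coordinate of the real coded along $t$ and change by a fixed amount $\ge\delta_n$ whenever that coordinate changes, while keeping all $b_t$ at levels $\ge N$ with incompatible coding at pairwise distance $\ge 1-\e_N$; then any Cauchy sequence in $M$ is eventually confined to a single branch, and its limit realizes $\bt_x$ only for the $x$ coded along that branch, which exists iff the branch is infinite in $S_x$. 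Verifying these ``separation'' estimates and that $\gamma\in\hM(L)$ is the bulk of the work; the descriptive-set-theoretic bookkeeping (steps 1, 2, 5) is routine given Lemma~\ref{L.Sigma11}.
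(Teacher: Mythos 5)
Your plan is a genuinely different route from the paper's, and it misses the key simplification that makes this proposition almost trivial. The paper observes that in a language whose only non-logical symbol is a single unary predicate $f$ (and no constants or function symbols), the only atomic formulas with one free variable are $f(x)$ and $d(x,x)$, and the latter is identically $0$; hence the \emph{entire} quantifier-free unary type of a point is determined by the single real $f(x)$. The paper then takes $M$ to be an arbitrary \emph{closed} subset $X\subseteq\oo\times\oo$ (with $\oo$ identified with $[0,1]\setminus\bbQ$ and equipped with a complete metric making the identity map into $[0,1]$ 1-Lipschitz), and interprets $f$ as the first projection. The set of realized quantifier-free unary types is then canonically and continuously parametrized by $\mathrm{proj}_1(X)$, which can be chosen $\bSigma^1_1$-complete. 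No trees, no completion of a countable premetric structure, and no verification of separation estimates is needed: $X$ is already a complete separable metric space, and $f$ is automatically 1-Lipschitz for the max-metric.

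Your tree-based construction is closer in spirit to what the paper does for Theorems~\ref{P1} and \ref{P2}, where tree structure must be visible to full first-order formulas. Besides being far heavier than necessary here, the proposal has a concrete gap: you index the carrier by $t\in\olo$ alone and claim that $b_t$'s quantifier-free type ``encodes $x\rs|t|$ where $x$ is determined coordinatewise along the branch,'' but for a single node $t$ there may be many reals $u$ with $(u,t)\in S$, so $t$ does not determine a partial real and the predicate values $P_n^k(b_t)$ are ill-defined. To repair this you would have to index the carrier by pairs $(u,s)\in S$ and encode $u$ in the type of $b_{(u,s)}$. Even after that fix, the items you flag as ``the bulk of the work'' — verifying the triangle inequality across incomparable branches, keeping incomparable clusters uniformly far apart while comparable ones contract, and ruling out spurious realizations coming from limits through finite-level coding points or from Cauchy sequences that hop between branches — are not actually carried out and are genuinely delicate. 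In short, your approach could probably be pushed through, but the proposition does not need it; the decisive observation you should internalize is that quantifier-free unary types in a predicates-only unary language collapse to a single continuous invariant, which lets you realize \emph{any} analytic subset of the value space directly as the set of realized types.
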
 

\begin{proof} Language $L$ has only one unary predicate symbol $f$, interpreted as a 1-Lipshitz function.  
Consider the Baire space  $\oo$ with a complete separable metric.  
We fix a homeomorphism of $\oo$ with $[0,1]\setminus \bbQ$
(e.g. send an  irrational to  the associated continuous fraction). 
 Fix a closed $X\subseteq \oo\times\oo$. 
Consider~$X$ with the $\max$-metric induced from~$\oo$ and interpret $f$ as the projection 
to the $x$-axis. Then  $f$ is 1-Lipshitz. 

The only atomic formulas in $L$ are $f(x)$ and $d(x,y)$. If $\varphi(x)$ is a quantifier-free formula with only one 
free variable, then the only atomic subformulas of $\varphi$ are of the form $f(x)$ and $d(x,x)$. 
The latter is identically equal to~0, and therefore the quantifier-free type of an element of $M$ 
is completely determined by its projection to the $x$-axis (with $\oo$ identified with a subset of $[0,1]$).  
Choosing $X$ so that its projection is a complete analytic set 
(combine \cite[\S 25.A]{Ke:Classical} and \cite[\S 27.B]{Ke:Classical}) 
 completes the proof. 
\end{proof}

\subsection{Type $\bs_0$ and the standard model} \label{S.q}
 We shall describe a theory $\bfT_{\cN,h}$ and a unary type $\bs_0$ in
the expanded language   $L_{\cN,h}=L_{\cN}\cup\{h\}$ (where $h$ is a new unary function symbol) 
such that the only model of $\bfT_{\cN,h}$
omitting $\bs_0$ is the unique expansion $\cN_h$ of $\cN$ to a model of $\bfT_{\cN,h}$.

Fix an enumeration $s_n$, for $n\in \omega$, of $\olo$. 
Let $\cN_h$ be the expansion of $\cN$ to $L_{\cN,h}$ obtained by 
 interpreting $h$ as follows (here $\langle n\rangle$ denotes the element of $\olo$
of length $1$ whose only digit is $n$). 
\[
h(x)=
\begin{cases} 
s_n, & \text{ if $x=\langle n \rangle$ for some $n$}\\
f_1(x), & \text{ otherwise.}
\end{cases} 
\]
Then $h$ is 2-Lipshitz because $d(x,y)\leq 1/2$ and $x\neq y$ 
 implies $h(x)=h(y)$. Let $\bfT_{\cN,h}=\Th(\cN_h)$ and
\[
\psi=\sup_x \inf_y d(x,h(y))+d(f_1(y),y).
\]
The range of $h\rs \{y: f_1(y)=y\}$ is $\olo$.  Since 
$\olo$ is a dense subset of $\cN$ we have $\psi^{\cN_h}=0$.  
By elementarity, 
 in every model $M$ of $\bfT_{\cN,h}$ the set $\{h^M(x): f_1(x)=x\}$ is dense in $M$.

\begin{lemma}\label{L.s0} There exists a partial $1$-type $\bs_0$ 
over $\bfT_{\cN,h}$ such that 
 $M$ omits~$\bs_0$ if and only if $M\cong \cN_h$, 
for every $M\models \bfT_{\cN,h}$.  
\end{lemma} 

\begin{proof} 
Let $\bs_0(x)$ be the type consisting of conditions 
\[
d(f_1(x), \langle n\rangle)=1
\]
for all $n\in \omega$. Then every finite subset of $\bs_0$ is realized in $\cN_h$ by a 
large enough $\langle m\rangle$. 
On the other hand, $\cN_h$ clearly omits $\bs_0$. Fix  $M\models \bfT_{\cN,h}$ which omits $\bs_0$.  Then 
 $\{y\in M: f_1(y)=y\}=\{\langle n \rangle : n\in \omega\}$. It follows that~$\cN_h$ 
 is dense in $M$ and $M$ and $\cN_h$ are isometrically isomorphic. 
  \end{proof} 

\subsection{$\bPi^1_1$-completeness}  Fix a complete theory $\bfT$ in a countable language. 
The  set of (not necessarily complete) $n$-types omissible in a model of $\bfT$ 
is~$\bSigma^1_2$,  by Proposition~\ref{P.Sigma-1-2}. 
  
 \begin{thm} \label{P1} There is 
 a complete theory $\bfT_2$ in a countable language $L_2$ 
 such that 
 the space of all 2-types $\bt$  omissible in a model of   $\bfT_2$ 
 is $\bPi^1_1$-complete. 
 \end{thm}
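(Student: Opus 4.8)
The plan is to take for $\bfT_2$ the theory of the Baire space structure from \S\ref{S.Baire}, suitably expanded so that the type $\bs_0$ of \S\ref{S.q} is available. Concretely, let $\cN^+$ be the expansion of $\cN$ to the separable language $L_2 := L_{\cN}\cup\{h\}$ in which $h$ is interpreted as in \S\ref{S.q}, and set $\bfT_2 := \Th(\cN^+)$. As in \S\ref{S.Baire}, $\cN^+$ has a dense set of interpretations of constant symbols, so it is the prime model of $\bfT_2$; hence by Corollary~\ref{C.Prime} the set of $2$-types omissible in a model of $\bfT_2$ is $\bPi^1_1$. It remains to prove $\bPi^1_1$-hardness, and for this I will exhibit a continuous reduction from the (classically $\bPi^1_1$-complete) set of well-founded trees on $\omega$ to the set of omissible $2$-types.

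Given a tree $S\subseteq\olo$, let $\bt_S(x)$ be the $1$-type expressing ``$x$ is an infinite branch through $S$'': the conditions
\[
d(f_n(x),\sigma)\ \ge\ \tfrac1{n+2}\qquad(n\in\omega,\ \sigma\in\omega^n\setminus S),
\]
where each such $\sigma$ is a constant symbol of $L_2$, together with the conditions $d(f_n(x),x)\ge\tfrac1{n+2}$ for $n\ge1$. Every finite fragment of $\bt_S$ is realized in $\cN^+$ by a sufficiently generic branch of $\olo$, so $\bt_S$ is consistent with $\bfT_2$. The second group of conditions excludes every finite node of $\cN^+$ (for $\tau\in\olo$ one has $f_n(\tau)=\tau$ once $n\ge|\tau|$), while for a branch $\beta\in\oo$ the first group forces $\beta\restriction n\in S$ for all $n$ (take $\sigma=\beta\restriction n$). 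Hence $\cN^+$ realizes $\bt_S$ if and only if $S$ has an infinite branch. Now put $\bt'_S := \bs_0\vee\bt_S$, the $2$-type supplied by Lemma~\ref{L.pairing} applied to the $1$-types $\bs_0$ and $\bt_S$. By that lemma a model $M\models\bfT_2$ omits $\bt'_S$ exactly when it omits both $\bs_0$ and $\bt_S$; by \S\ref{S.q} the only model of $\bfT_2$ omitting $\bs_0$ is $\cN^+$ (which also omits $\bs_0$). Therefore $\bt'_S$ is omissible in a model of $\bfT_2$ if and only if $\cN^+$ omits $\bt_S$, i.e. if and only if $S$ is well-founded; so $S\mapsto\bt'_S$ is the desired reduction, provided it is continuous.

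Continuity is the one place where care is needed. Fix an enumeration of the countably many conditions above so that membership of each of them in $\bt_S$ is a clopen function of $S$ (in the Cantor topology on trees); since the pairing operation of Lemma~\ref{L.pairing} is built from $\min$ and $\max$, it follows that $S\mapsto\bt'_S$ is, at the level of the listed conditions, locally constant. To pass to continuity of $S\mapsto\bt'_S$ as a map into the compact metric space $S_2^-(\bfT_2)$ of \S\ref{S.Incomplete}, one uses that the conditions attached to level $n$ pin $f_n(x)$ down only to within $\tfrac1{n+2}$: altering $S$ solely at nodes of length $\ge N$ changes the closed set of complete $2$-types extending $\bt'_S$ by at most $O(1/N)$ in the Hausdorff metric. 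This bound is the crux: one must check that every complete type extending the ``shallow part'' of $\bt_S$ can be nudged, within metric distance $O(1/N)$, to a complete type also satisfying all the level-$>N$ conditions --- which in turn uses the richness of models of $\bfT_2$, a realization being modifiable at deep levels without moving far and such deep modifications always being choosable so as to respect the remaining conditions.

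Granting that estimate, the reduction is continuous, so the set of omissible $2$-types is $\bPi^1_1$-hard, and together with the first paragraph it is $\bPi^1_1$-complete. The main obstacle is thus entirely in the last paragraph: the careful Hausdorff-metric estimate underlying continuity of $S\mapsto\bt_S$. Everything else is a soft combination of Corollary~\ref{C.Prime}, Lemma~\ref{L.pairing}, the analysis of $\bs_0$ in \S\ref{S.q}, and the classical $\bPi^1_1$-completeness of the set of well-founded trees.
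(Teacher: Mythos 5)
Your approach is genuinely different from the paper's, and it is worth spelling out where the two diverge. The paper enlarges the language with a second, compact sort $D_2$ interpreted by the space $\cT$ of subtrees of $\olo$, together with a Lipschitz membership predicate $\ee$; the reduction sends $S$ to a $2$-type $\bt^S(x,y)$ that pins the $D_2$-variable $y$ down to (a copy of) $S$ via the metric conditions $d(S_n,y)=d(S_n,S)$ and the $\ee$-conditions, and then says $x$ is a branch through $y$. Since $\cT$ is compact and the parameters $d(S_n,S)$ vary continuously in $S$, continuity of $S\mapsto\bt^S$ is near-immediate, and no pairing with $\bs_0$ is used at all (the prime model $\cN_2$ omits $\bt^S$ exactly when $S$ is well-founded, and realization in the prime model propagates to every model). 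You instead stay in the plain expanded Baire-space language and bake $S$ directly into the type via the constant symbols for $\olo$. This is a cleaner theory, but it pushes all the difficulty into the continuity argument, and that is precisely the step you have not supplied.

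Two smaller issues first. First, the pairing with $\bs_0$ is unnecessary: as you yourself verify, $\cN^+$ realizes $\bt_S$ iff $S$ is ill-founded, and since $\cN^+$ is prime (it has a dense set of named elements and every $\cM\models\bfT_2$ contains it as an elementary submodel), any realization in $\cN^+$ transfers to every model. So $\bt_S$ alone (padded to a $2$-type by a dummy variable) already gives the reduction. Second, if you do keep the pairing, be careful: by the actual construction in Lemma~\ref{L.pairing} — $\bt\vee\bs=\{\theta_F\}$ with $\theta_F=\max$ — a pair $(a,b)$ realizes $\bt\vee\bs$ iff $a$ realizes $\bt$ \emph{and} $b$ realizes $\bs$, so $M$ omits $\bt\vee\bs$ iff $M$ omits at least one of $\bt,\bs$; the statement of item~(1) in that lemma appears to have $\vee$ and $\wedge$ interchanged relative to the displayed formulas. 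Taking the lemma's text at face value, $\bs_0\vee\bt_S$ is \emph{always} omissible (since $\cN^+$ always omits $\bs_0$), which would collapse your reduction; what you want is the $\wedge$-version, $\bs_0\wedge\bt_S$, whose omission forces both $\bs_0$ and $\bt_S$ to be omitted.

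The substantive gap is the continuity of $S\mapsto\bt_S$ (equivalently $S\mapsto\bt'_S$) into $S_2^-(\bfT_2)$, which you correctly identify as the crux and then leave at the level of ``Granting that estimate.'' This really does require an argument, and it is more than a naive Hausdorff-metric estimate on conditions: one needs to handle nonstandard complete types extending $\bt_S$. The intended argument is something like the following. For $S_k\to S$, one direction (any limit of $q_k\in[\bt_{S_k}]$ lies in $[\bt_S]$) is a straightforward passage to the limit in each condition. For the other direction, given $p\in[\bt_S]$, realize $p$ by $a$ in an $\aleph_0$-saturated $M\models\bfT_2$; if $S$ and $S_k$ agree on $\omega^{<N_k}$, construct a branch $a'_k\in M$ with $f_n(a'_k)=f_n(a)$ for $n\le N_k$ and, for $n>N_k$, choose $f_n(a'_k)$ to be a node of length $n$ of $M$ lying at distance $\ge 1/n$ from every standard node of length $n$ (possible by saturation and the $\bfT_2$-axiom that every node has infinitely many successors); since distinct nodes of length $n$ lie at distance $\ge 1/n\ge 1/(n+2)$, this $a'_k$ realizes $\bt_{S_k}$, and $d(a,a'_k)\le 1/(N_k+1)\to0$ forces $\tp_M(a'_k)\to p$ in the logic topology. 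This is a fixable gap, not a fatal one, but as written your proposal asserts rather than proves the key estimate, and the phrase ``within metric distance $O(1/N)$'' obscures that the real content is the existence of suitable nonstandard perturbations in saturated models. The paper's auxiliary tree-sort buys precisely the avoidance of this argument.
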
 
 
 \begin{proof} We define an expansion $L_2$ of $L_{\cN}$  (\S\ref{S.Baire}) 
 to be a two-sorted language with the sorts $D_1$ and $D_2$. If $M$ is an $L_2$-structure
 then $D_1^M$ is an $L_{\cN}$-structure. 

A \emph{subtree} of $\olo$ is a (possibly empty) subset  $a$ of $\olo$ such that  $s\in a$ 
implies $t\in a$ for all $t\sqsubseteq s$ in $\olo$. 

 The intended interpretation of  $D_2$ is  the space $\cT$ of all subtrees of~$\olo$.   
The  language~$L_2$  is  equipped with the following

\begin{enumerate}
\popcounter
\item\label{I.L.3}  Constant symbols $S_n$, for $n\in \omega$, 
for all finite-width subtrees of $\olo$ all of whose branches have eventually zero value.
\item A binary predicate symbol $\ee$ of sort $D_1\times D_2$. 
\pushcounter
\end{enumerate}
The interpretation of $\ee$   is required to be  1-Lipshitz. 
Theory $\bfT_2$ is the theory of the $L_2$-model $\cN_2$ described as follows. 
The universe of $\cN_2$ is the set $\olo\sqcup \oo \sqcup \cT$ and $\cN_2$ is an 
extension of the $L_{\cN}$-model $\cN$ as described in~\S\ref{S.Baire}. 

 The metric on $\cT$ is defined via 
(let  
 $\delta(a,b)=\min\{k: a\cap k^{\leq k}\neq b\cap k^{\leq k}\}$)
  \[
 d(a,b)=1/\delta(a,b).
 \]
Since the set of all subtrees of $\olo$ is a closed 
set in the Cantor-set topology of $\cP(\olo)$, 
  $D_2^{\cN_2}$ is a compact metric space and metric $d$ is easily seen to be compatible with this topology. 
Interpretations of constant symbols from \eqref{I.L.3}  form a countable dense set. 
  
We introduce an auxiliary function $\ell\colon \olo\to \omega$ via
\[
\ell(t)=\max(\{|t|\}\cup \range(t))
\]
and define $\ee$  on $\olo\times \cT$ via
 \[
 \ee(t,S)=\begin{cases} 
 0, & \text{ if } t\in  S,\\
 1/(\ell(t)+1), & \text{ if } t\notin S. 
 \end{cases} 
 \]
The predicate $\ee$ is Lipshitz on $\olo \times D_2^{\cN_2}$
because $d(S,T)\leq 1/k$ and $d(s,t)\leq 1/k$ 
implies that $\ee(s,S)\leq 1/m$ iff $\ee(t,T)\leq 1/m$ for all $m\leq k$.  
We can therefore continuously extend $\ee$ to $\oo\times \cT$. 
Then we have $\ee(x,S)=0$ for all  $S\in \cT$ and all branches $x$ of $S$.

  Let $\bfT_2=\Th(\cN_2)$. 
  Since the interpretations of  constants in $L$  
  form a dense subset of $\cN_2$, it is an atomic model 
of~$\bfT_2$. Therefore a type is omissible in a model of $\bfT_2$ if and only if it is omitted
in $\cN_2$.

Let $S\in \cT$. 
 We let $\bt^S$ be the partial  type in variables
 $x,y$ of the sort $D_1\times D_2$ consisting of the following conditions: 
\begin{enumerate}
\popcounter
\item\label{I.T2..1}  $\ee(f_k(x),y)=0$ for all $k$. 
\item\label{I.T2..2} $\ee(t,y)=0$ if $t\in S$ and $\ee(t,y)=1/(\ell(t)+1)$ if $t\notin S$, for all constants 
$t\in \olo$. 
\item\label{I.T2..3} $d(S_n,y)=\e_n$, where $\e_n=d(S_n,S)$, for all $n$. 
\item  \label{I.T2..4}
$|d(f_k(x), x)-1/(k+1)|=0$.
\pushcounter
\end{enumerate}
Suppose $b,c$ is a realization of $\bt^S$ in a model $M$ of $\bfT_2$. Then $D_1^M\models \bfT_{\cN}$ 
and~$T_M$ is a tree of height $\omega$ (see \S\ref{S.WF}).  
By \eqref{I.T2..2}
and  \eqref{I.T2..3} $c$ is a subtree of~$T_M$ such that $c\cap \olo=S$. 
By  \eqref{I.T2..4} $b$ is a branch and by 
 \eqref{I.T2..1} it is a branch of the tree~$c$.

The map $\cT\ni S \mapsto \bt^S\in S_2^-(\bfT_2)$ (see \S\ref{S.Incomplete}) is clearly continuous
(each of the spaces is considered with respect to its compact metrizable topology). 
Since the set of well-founded trees in $\cT$ is $\bPi^1_1$-complete (\cite[32.B]{Ke:Classical}), 
it only remains to check 
that $\bt^S$ is omissible in a model of  $\bfT_2$ if and only if $S$ is well-founded. 
Since the standard model $\cN_2$ of $\bfT_2$
is the atomic model of~$\bfT_2$, this is equivalent to $\bt^S$ being 
omissible in $\cN_2$. 

If $S$ is well-founded then $\cN_2$ omits~$\bt^S$. This is because 
if $(b,a)$ realizes~$\bt^S$ then $a=S$, and therefore $b\in \oo$ has to be a `true' branch of $S$.  
If $S$ is ill-founded and $b$ is its branch, then 
$\cN_2$ realizes $\bt^S$ by $(b,S)$. 

Since by (1) of  Proposition~\ref{C.Prime} the set of types omissible in a model of $\bfT$ is $\bPi^1_1$, 
this completes the proof. 
\end{proof} 

\begin{coro} \label{C-1-1} 
There exists a separable model $\cN_2$ such that the 
set of types omitted in $\cN_2$ is $\bPi^1_1$-complete. 
\end{coro} 

\begin{proof} Model $\cN_2$ used in Theorem~\ref{P1} is an atomic model of its theory, 
and therefore a type is omissible in a model of $\bfT_2$ if and only if $\cN_2$ omits it. 
\end{proof} 

\subsection{$\bSigma^1_2$-completeness}
Denote  the  compact metrizable space of all
subtrees of $\olo\times \olo$ by~$\cT^2$. For $R\in \cT^2$ and $x\in \oo$ let 
\[
R_x=\{s\in \olo: (s,x\rs |s|)\in R\}. 
\]
The following fact is well-known but we could not find a reference in the literature. 

\begin{lemma} \label{L.Sigma12-complete}
The subspace $Z$ of all $R\in \cT^2$ such that for some~$x$ the tree~$R_x$ is well-founded is a
complete $\bSigma^1_2$ set. 
\end{lemma} 

\begin{proof} This set is clearly $\bSigma^1_2$. Since every uncountable Polish space is Borel-isomorphic to $\oo$, it suffices
to show that every $\bSigma^1_2$ subset of $\oo$ is a continuous preimage of~$Z$. 
Suppose $A$ is a $\bSigma^1_2$ subset of $\oo$. 
Fix a closed subset $F$ of $(\oo)^3$
such that $A=\{x: (\exists y\in \oo)(\forall z\in \oo) (x,y,z)\notin F\}$ and  
let  $T=\{(x\rs m, y\rs m, z\rs m): (x,y,z)\in F, m\in \omega\}$. 
Define  $R\colon \oo\to \cT^2$ by  
\[
R(x)=\{(s,t): |s|=|t|\text{ and }(x\rs |s|, s,t)\in T\}.
\]
 This 
is a continuous map, and  $x\in A$ if and only if the tree 
$R(x)_y$ is well-founded for some $y\in \oo$. Therefore $A=R^{-1}(Z)$. 
Since $A$ was an arbitrary~$\bSigma^1_2$ subset of $\oo$  
this completes the proof. 
\end{proof} 

The following theorem is logically incomparable with Theorem~\ref{P1} since 
 the theory $\bfT_3$  is not complete.

 \begin{thm} \label{P2} There is a theory $\bfT_3$ in a countable language $L_3$ such that 
 the space of all $2$-types  omissible in a model of   $\bfT_3$ is $\bSigma^1_2$-complete. 
 \end{thm}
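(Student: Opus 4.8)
The plan is to mimic the structure of the proof of Theorem~\ref{P1}, but engineer the theory so that omitting a type encodes a $\bSigma^1_2$ condition rather than a $\bPi^1_1$ one. The canonical complete $\bSigma^1_2$ set is
\[
A=\{x\in\omega^\omega: \text{the tree }T_x\text{ on }\omega\times\omega\text{ has an ill-founded section... }\}
\]
more precisely, a set of the form $\{x:\exists y\,(x,y)\in B\}$ where $B\subseteq\omega^\omega\times\omega^\omega$ is a complete coanalytic set, say $B=\{(x,y): T_{x,y}\text{ is well-founded}\}$ for a recursive assignment $(x,y)\mapsto T_{x,y}$ of subtrees of $\olo$. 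So the target is: given $x$, produce (continuously) a type $\bt^x$ over a fixed theory $\bfT_3$ such that $\bt^x$ is omissible in a model of $\bfT_3$ iff there exists $y$ with $T_{x,y}$ well-founded. The existential quantifier over $y$ will be produced by the freedom to choose which model of $\bfT_3$ we build (this is exactly why $\bfT_3$ is allowed to be incomplete — models of $\bfT_3$ will come in a $\bSigma^1_1$-parametrized family, one "slot" for each candidate witness $y$), and the coanalytic inner condition "$T_{x,y}$ well-founded" will be handled by a well-foundedness/omitting argument as in Theorem~\ref{P1}.

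First I would set up the language $L_3$ as a multi-sorted extension of $L_2$: keep the Baire-space sort $D_1$ (with $\olo\sqcup\oo$ and the functions $f_k$) and the tree sort $D_2$ (with $\ee$ and the constants $S_n$), and add a third sort $D_3$ whose intended interpretation in the "standard" model is again a copy of $\oo$, meant to hold the witness $y$; add a unary predicate or a pairing function linking $D_1$, $D_3$, and $D_2$ so that, in any model, naming an element of $D_3$ (a potential branch-code $y$) together with an element of $D_2$ recovers the tree $T_{x,y}$ up to the relevant finite approximations, continuously and Lipschitz-ly, exactly as $\ell$ and $\ee$ did in Theorem~\ref{P1}. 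Crucially, unlike $\bfT_2$, the theory $\bfT_3$ is NOT the full theory of one standard structure: I would let $\bfT_3$ be an incomplete theory whose models are, essentially, the structures $N_y$ obtained by fixing a value of $y\in D_3$ and building the Baire/tree part accordingly — i.e. $\bfT_3$ axiomatizes "there is a distinguished element $c\in D_3$, and the tree attached to it is $T_{x,c}$" but leaves $c$ unspecified. Then the type $\bt^x(u,v,w)$, in sorts $D_1\times D_2\times D_3$, is built (as in Theorem~\ref{P1}, conditions (5)--(8) there) to force: $w$ is the distinguished witness $c$, $v$ equals the tree $T_{x,w}$, and $\{f_k(u):k\}\cap\olo$ is a branch through $v$; so a realization of $\bt^x$ in $N_y$ exists iff $T_{x,y}$ is ill-founded. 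Hence $\bt^x$ is omitted in $N_y$ iff $T_{x,y}$ is well-founded, and $\bt^x$ is omissible in SOME model of $\bfT_3$ iff $\exists y\,(T_{x,y}\text{ well-founded})$, which is $x\in A$. Consistency of $\bt^x$ with $\bfT_3$ is checked via finite approximations realized in standard models exactly as in the previous proof. Combined with Lemma~\ref{L.Sigma-1-2} (the set of omissible types is always $\bSigma^1_2$) and the continuity of $x\mapsto\bt^x$ into $S_1^-(\bfT_3)$ (wait — $\bt^x$ as described is a $3$-type; I would collapse the three sorts into a single sort, or use the pairing trick of Lemma~\ref{L.pairing}-style coding, so that $\bt^x$ becomes a genuine $1$-type as the statement demands, coding the triple $(u,v,w)$ inside one element of a product sort), this yields $\bSigma^1_2$-completeness.

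The main obstacle I expect is the reduction of the arity to a genuine \emph{$1$-type} while keeping the theory in a separable language and keeping the encoding Lipschitz: in Theorem~\ref{P1} the analogous object was a $2$-type, and here the natural object is a $3$-type, so I must fold the auxiliary data (the tree $v$, the witness $w$) into functional images of the single variable $u$ — i.e. introduce unary function symbols $g,h$ sending $D_1$ onto $D_2$ and $D_3$ in the standard model, with the tree-structure-extraction predicate $\ee$ reading off $T_{x,u}$ from $g(u)$ — and then phrase $\bt^x(u)$ purely in terms of $u$, $f_k(u)$, $g(u)$, $h(u)$. The subtlety is ensuring these collapsing maps are uniformly continuous (a fixed modulus) and that the resulting single-variable type still exactly forces "$\{f_k(u)\}\cap\olo$ is a branch of $T_{x,h(u)}$" with no spurious realizations; a second subtlety is making sure $\bfT_3$ genuinely has, for every $y\in\oo$, a model realizing the "$c=y$" configuration and no model in which $c$ ranges over something larger that would accidentally realize $\bt^x$ when all $T_{x,y}$ are well-founded — i.e. pinning down $D_3^N$ to be exactly a copy of $\oo$ in every model, which is where an auxiliary omitting-type-style axiom scheme (as with $\bs_0$ in~\S\ref{S.q}) may be needed, or where the incompleteness of $\bfT_3$ is genuinely exploited.
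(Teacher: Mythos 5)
Your overall strategy matches the paper's exactly: the existential quantifier over the witness comes from the incompleteness of $\bfT_3$ (different models correspond to different choices of witness), and the inner $\bPi^1_1$ condition is a well-foundedness statement handled as in Theorem~\ref{P1}. However, the two ``subtleties'' you flag at the end are precisely the places where your argument, as stated, does not yet work, and you have not quite found the mechanism the paper uses to resolve them.

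First, you say $\bfT_3$ should axiomatize ``the tree attached to $c$ is $T_{x,c}$,'' but the theory must be a single fixed object while $T_{x,c}$ depends on the reduction parameter $x$; the parameter has to live entirely in the type, not the theory. In the paper, $L_3$ has a third sort $D_3$ whose intended interpretation is the compact space $\cT^2$ of subtrees of $\olo\times\olo$, the predicate $\ee$ is extended to $D_1\times D_1\times D_3$, and the type $\bt^R$ (with $R$ playing the role of your $T_x$) consists of conditions $\ee(f_k(x),f_k(c),R)=0$ together with the branch conditions from Theorem~\ref{P1}. The witness is a \emph{constant} symbol $c$ of sort $D_1$ which $\bfT_3:=\Th(\cN_3)$ simply never mentions --- that is the only source of incompleteness, and it also makes $\bt^R$ genuinely unary in $x$, so the sort-collapsing machinery you worry about is unnecessary.

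Second, and more importantly, pinning the model down to be standard cannot be done by an ``axiom scheme'': adding axioms would destroy the incompleteness you need, and standardness of the $L_{\cN}$-part is not first-order anyway. The paper instead invokes Lemma~\ref{L.pairing} to reduce to pairs of simultaneously omissible types and fixes one member of the pair to be $\bs_0$ from \S\ref{S.q}. Omitting $\bs_0$ forces the $L_{\cN}$-reduct of the model to be isometrically isomorphic to the standard $\cN$, so $c$ is interpreted by an honest element $a\in\oo$; then $\bt^R$ is omitted in that model if and only if $R_a$ is well-founded, and the two types are simultaneously omissible if and only if some $a$ makes $R_a$ well-founded. It is the joint omission of a second type, not added axioms, that enforces standardness --- and this is precisely the role your proposal leaves vague.
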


\begin{proof} 
We define a joint expansion $L_3$ of $L_2$ as in the proof of Theorem~\ref{P1}
and $L_{\cN,h}$ as in~\S\ref{S.q}.  
It is  a three-sorted language with sorts $D_1$, $D_2$, and~$D_3$, 
with $D_1$ and $D_2$ interpreted as before, with the exception of predicate $\ee$. 
The intended interpretation of~$D_3$ is $\cT^2$.

In addition to  symbols imported from $L_2$ and $h$, in~$L_3$ we have the following. 
\begin{enumerate}
\popcounter
\item  A constant symbol $c$ of  sort $D_1$. 
\item  Constant symbols $R_n$, for $n\in \omega$, of sort $D_3$  
for all finite-width subtrees of $\olo\times \olo$ 
all of whose branches have eventually zero value. 
\item Predicate $\ee$ of the  
sort $D_1\times D_1\times D_3$.
\pushcounter
\end{enumerate}
Let $L_3^-$ be the language $L_3$ without the constant symbol $c$. 
Theory $\bfT_3$ is the theory of the $L_3^-$-model $\cN_3$ described as follows. 
Its universe is equal to $\olo\sqcup \oo\sqcup \cT\sqcup\cT^2$
and it includes model $\cN_2$ as defined in the proof of Theorem~\ref{P1}
and has predicate $h$ as defined in~\ref{S.q}. 

The metric $d$ on $\cT^2$ defined as
\[
d(R,S)=\inf\{1/k: R\cap (k^{\leq k})^2\neq S\cap (k^{\leq k})^2\}
\]
turns $\cT^2$ into a compact metric space. 

Define $\ee$  on $\olo\times \olo\times \cT^2$ via
 \[
 \ee(s,t,R)=\begin{cases} 
 0, & \text{ if } (s,t)\in  R,\\
 1/(\max(\ell(s),\ell(t))+1), & \text{ if } (s,t)\notin R. 
 \end{cases} 
 \]
This  predicate is Lipshitz since $d(S,T)\leq 1/k$, 
$d(s_1,t_1)\leq 1/k$ and $d(s_2,t_2)\leq 1/k$ 
together imply that $\ee(s_1,s_2,S)=1/m$ iff $\ee(t_1,t_2,T)=1/m$ for all $m\leq k$.  
We can therefore continuously extend $\ee$ to $\oo\times \cT^2$. 
By  the continuity  we have $\ee(x,y,S)=0$ for all $x,y\in \oo$ and all $S\in \cT$. 

 Theory   $\bfT_3=\Th(\cN_3)$ is not a complete $L_3$-theory. 
   since 
 it provides no information on the interpretation of 
the constant symbol  $c$. For  $R\in \cT^2$
let  $\bt^R(x,y)$ be a type in the sort $D_1\times D_3$  consisting of the following conditions. 
\begin{enumerate}
\popcounter
\item\label{I.pR.0}  $d(R_n,y)=r$, where $r=d(R_n,R)$, for all $n\in \omega$. 
\item \label{I.pR.1} $\ee(f_k(x), f_k(c), y)=0$ for all $k\in \omega$. 
\item \label{I.pR.2}  $|d(f_k(x), x)-1/(k+1)|=0$.
\pushcounter
\end{enumerate}
If $(b,S)$ realizes $\bt^R$ in $M\models\bfT_3$, then $R=S$ by \eqref{I.pR.0}, $f_k(b)\neq b$ for all $k$ 
by \eqref{I.pR.2}, and $b$ is a branch of $R_c$ by \eqref{I.pR.1}.  
Let $\bs=\bs_0$ defined in \S\ref{S.q}

We claim that $\bt^R$ and $\bs$ are simultaneously 
 omissible if and only if there exists a real $a\in \oo$ such that $R_a$ is well-founded. 
If there is such a real, then the model of~$\bfT_3$ obtained by interpreting $c$ as $a$ omits both 
$\bt^R$ and~$\bs$. 
Now assume there is no such real and let $N$ be a model of $\bfT_3$ in which~$\bs$ is omitted. 
By Lemma~\ref{L.s0}, 
 the reduct of  $N$  to $L_{\cN}$ is isometrically isomorphic to $\cN_h$. 
If $a$ is the interpretation of $c$ in $N$, 
then $c\in \oo$. Therefore the tree~$R_c$ is ill-founded, and  $\bt^R$ is realized in $N$ 
by $(b,R)$ where $b$ is any branch of $R_c$. 

The function $\cT^2\ni R\mapsto \bt^R\in S_1^-(\bfT_3)$  (see \S\ref{S.Incomplete}) 
is clearly continuous. Therefore  Lemma~\ref{L.vee} implies that the function 
$\cT^2\ni R\mapsto \bt^R\vee \bs_0\in S_2^-(\bfT_3)$ is continuous. 
We therefore have a continuous map from $\cT^2$ into $S_2^-(\bfT_3)$ such that the 
preimage of the set of omissible types is, by 
Lemma~\ref{L.Sigma12-complete},  a  $\bSigma^1_2$-complete set, 
and this concludes the proof.   
\end{proof}

\section{Forcing and  omitting types}
\label{S.forcing}

Our study of generic models is motivated by potential applications to operator 
algebras (see \cite{Fa:Logic},  \cite{Muenster}, also \S\ref{S.Uniform} and \S\ref{S.CR}). 
Related results   
were obtained in  \cite{ben2009model} and \cite{eagle2013omitting}, 
similarly inspired by Keisler's classic~\cite{keisler1973forcing}. 
Both of these papers study a version of Keisler's forcing adapted to the 
infinitary version of the logic of metric structures. 
In the first-order logic a type (complete or partial) is omitted in a generic 
model if and only if it is omissible. 
In the  logic of metric structures this  remains true for 
complete types (by \cite[Theorem~12.6]{BYBHU} or 
 Theorem~\ref{P3} below) but not for partial types (Theorem~\ref{C.generic}). 
There are several good sources for 
the metric model-theoretic forcing  
(\cite{caicedo2014omitting}, \cite{eagle2013omitting}, \cite{ben2009model}, 
\cite[Appendix A]{goldbring2014kirchberg}, \cite[\S 6]{Muenster},  and \cite{FaGoHaSh:Existentially}). Since 
the present paper is a companion to \cite{Muenster} meant to be self-contained
and accessible to non-logicians,  we include some of the basics for the reader's convenience. 
The forcing construction described below is also known as the \emph{Henkin construction}.

\subsection{The forcing notion $\bbPTS$} \label{S.Forcing.Sigma}
Fix  a (not necessarily complete) theory $\bfT$ in a (not necessarily separable) 
 language~$L$ and a set of $L$-formulas $\Sigma$
 with  the following closure 
properties. 
\begin{enumerate}
\item [($\Sigma1$)] $\Sigma$  includes all quantifier-free formulas,
\item  [$(\Sigma 2)$] $\Sigma$ is closed under taking subformulas and  change of variables, 
\item [$(\Sigma 3)$] if $k\in \omega$,  $\varphi_i(\bar x)$, for $0\leq i<k$, 
 are in $\Sigma$,  and $f\colon [0,1]^k\to [0,1]$ is a continuous 
function, then  $f(\varphi_0(\bar x),\dots, \varphi_{k-1}(\bar x))$ is in $\Sigma$. 
\end{enumerate}
Two most interesting cases are when $\Sigma$ is the set of all quantifier-free formulas and when $\Sigma$ is the set of all formulas.

We postulate a simplifying assumption 
that~$L$ has a single sort with a single domain of quantification. If this is not the case, the 
 forcing can be modified by adding an infinite supply of constants (like $d_j$, for $j\in \omega$ below) 
 for every domain of quantification. For example, in the 
 case of \cstar-algebras, tracial von Neumann algebras, 
  or other Banach algebras 
 one adds constants~$d^n_j$, for $j\in \omega$, 
 for elements of the $n$-ball for every $n\geq 1$; 
 we omit the   straightforward details (see \cite[\S 6]{Muenster} for the case of \cstar-algebras). 

Let $d_j$, for $j\in \omega$, be a sequence of new constant symbols and let 
\[
L^+=L\cup \{d_j: j\in \omega\}.
\] 
If $F=(f(0), \dots, f(n-1))$ is an $n$-tuple of natural numbers, then we write 
 \[
 \bar d_F=(d_{f(i)}: i<n).  
\]
Let
\[
\Sigma^+=\{\varphi(\bar d_F): \varphi(\bar x)\in \Sigma, \bar d_F\text{ 
is of the same length as }\bar x \}. 
\]
For every $L$-formula $\varphi(\bar x)$ and every tuple $F$ of the appropriate 
length,~$\varphi(\bar d_F)$ is an $L^+$-sentence.  Conversely, 
 every $L^+$-sentence is of this form.

An open  condition   (see \S\ref{S.CFC}) $\varphi(\bar d)<\e$ is \emph{satisfied} in model $M$ 
if there exists $\bar a$ in $M$ of the appropriate length such that $\varphi(\bar a)^M<\e$. 
A condition is \emph{consistent with $\bfT$} if it is satisfied  in some model of $\bfT$. 
Suppose  $\varphi$ and $\psi$ are  $L^+$-sentences such that every $d_i$ that occurs in $\psi$ occurs in $\varphi$. 
We write 
\[
\bfT\cup\{\varphi<\e\}\models \psi<\delta
\]
 if  in every $M\models \bfT$ and every interpretation $\bar a$ of constants $d_i$ occurring in $\varphi$ in  $M$
 one has that $\varphi(\bar a)^M<\e$ implies $\psi(\bar a)^M<\delta$.

A \emph{condition} in $\bbPTS$ is   a triplet
 \[
 p=(\psi^p, F^p, \e^p)
 \]
(we shall write $(\psi,F,\e)$ whenever $p$ is clear from the context)
  where $\psi$ is an $n$-ary formula in $\Sigma$, $F$ is an $n$-tuple of natural numbers, 
    $\e>0$, 
and $\psi(\bar d_F)< \e$ 
is a condition consistent with $\bfT$. 
We shall write $\bar d^p$ instead of~$\bar d_{F^p}$.  
The poset~$\bbPTS$ is ordered  by 
\begin{align*}
 p\geq q& \qquad\text{if}\qquad\text{$F^p\subseteq F^q$ and }
\bfT\cup\{\psi^q(\bar d^q)<\e^q\}\models  \psi^p(\bar d^p)<\e^p. 
 \end{align*}
If $p\geq q$ then we say that $q$ \emph{extends} $p$ or that $q$ is \emph{stronger than} $p$. 
By Lemma~\ref{L.CFC} every condition is equivalent to some $p$ such that $\e^p=1$. 
Conditions $p$ and $q$ are \emph{incompatible}, $p\perp q$, if no condition extends both $p$~and~$q$. 
Conditions $p$ and $q$ are \emph{compatible}, $p\not\perp q$, 
 if some condition extends both $p$ and $q$. 

In the terminology of \cite{hodges2006building} and \cite{FaGoHaSh:Existentially}, 
if $\Sigma$ consists of all quantifier-free formulas then $\bbPTS$ is the \emph{Robinson forcing}, 
or the \emph{finite forcing}. If~$\Sigma$ consists of all formulas, then $\bbPTS$ is the \emph{infinite forcing}. 
In the latter case, we shall write $\bbPT$ for $\bbPTS$. 

We  identify condition $p=(\psi^p,F^p,\e^p)$ in $\bbPTS$ with 
the  open condition $\psi^p(\bar d^p)<\e^p$
and  use notations $\bfT+p$ and $\bfT\cup \{\psi(\bar d^p)<\e\}$ interchangeably. 

A recap of 
 standard forcing terminology (\cite{Ku:Set}, \cite{schindler2014set}) is in order. 
 Subset~$G$ of $\bbPTS$ is a \emph{filter} if every two elements of~$G$ have a common extension in $G$ and $q\in G$ and $p\geq q$ implies $p\in G$. 
A subset $\bfD$ of $\bbPTS$ is \emph{dense} if every $q\in \bbPTS$ has an extension in  $\bfD$. 
It is \emph{dense below} some $p\in \bbPTS$ if every $q\leq p$ has an extension in $\bfD$. 
If $\bfF$ is a family of dense subsets of $\bbPTS$ then a filter $G$ is \emph{$\bfF$-generic} 
if $G\cap \bfD\neq \emptyset$ for all $\bfD\in \bfF$.   

\begin{lemma} \label{L.CtbleGeneric} If $\bfF$ is a countable family of dense subsets of $\bbPTS$ 
then there exists a $\bfF$-generic filter. 
\end{lemma} 

\begin{proof} Enumerate sets in $\bfF$ by $\omega$ 
and choose a decreasing sequence $p_n$, for $n\in \omega$, so that $p_n$ belongs to the $n$th set in $\bfF$. 
Then 
\[
G=\{q\in \bbPTS: (\exists n) p_n\leq q\}
\]
 is an $\bbF$-generic filter. 
\end{proof} 

\begin{lemma} \label{L.Dphi}
For  $\varphi(\bar d_F)\in \Sigma^+$ and $k\geq 1$ the set
\[
\bfD_{\varphi(\bar d_F), k}=\{p\in \bbPTS: (\exists r\in \bbR) \bfT+p\models |\varphi(\bar d_F)-r|<1/k\}. 
\]
 is  dense in  $\bbPTS$. 
\end{lemma}  

 \begin{proof}  Fix a condition  $p\in \bbPTS$ and let $n=n^p$. We may assume that $\e^p<1/k$. 
 If $M\models \bfT$ is such that $\bar a\in M^n$ satisfies $p$ then  
  $M\models \psi^p(\bar a)<\e^p$. With $r=\varphi(\bar a)^M$, the condition
 \[
q= (\max(\psi^p, |\varphi(\bar d_F)-r|), F^p, \e^p)
 \]
is satisfied in $M$ by $\bar a$ and it extends $p$. 
\end{proof} 


If $L$ is separable then  $G$ meets  all dense sets of the form $\bfD_{\varphi(\bar d_F), \e}$ 
if and only if it meets all dense sets of the form $\bfD_{\varphi_j(\bar d_F), 1/k}$ 
where $\varphi_j(\bar x)$, for $j\in \omega$, is a set of formulas dense 
in the pseudometric~$d_{\bfT}$  defined in  \S\ref{S.Formulas} and $k\in \omega$. 

 A formula is an \emph{$\forall\exists$-formula} if it is of the form 
 \[
\textstyle \sup_{\bar x}\inf_{\bar y}\psi(\bar x, \bar y, \bar z)
 \]
 where $\psi$ is quantifier-free. A theory $\bfT$ is \emph{$\forall\exists$-axiomatizable}
 if it is axiomatizable by a set of $\forall\exists$-sentences. 
We emphasize that in the following proposition theory $\bfT$ 
 is not assumed to be  complete and that the language $L$ is not assumed to be separable. 
 
\begin{thm} \label{P3} 
Suppose  $\bfT$ is a theory  in a 
language $L$  and $\Sigma$ is a set of $L$-formulas 
satisfying $(\Sigma 1)$--$(\Sigma 3)$. Then there is a  family $\bfF$ of dense subsets of $\bbPTS$ with the following property. 
If $G$ is an   $\bfF$-generic filter, then there exists a unique
 $L^+$-structure $M_G$ satisfying the following.  
\begin{enumerate}
\item \label{I.P3.1} 
The interpretations of  $\{d_j: j\in \omega\}$ form 
a dense subset of $M_G$. 
\item \label{I.P3.1.1} Every condition $p\in G$ is satisfied in $M_G$. 
\item \label{I.P3.2} If $\Sigma$ is the set of all  $L$-formulas 
then $M_G\models \bfT$. 
\item \label{I.P3.3} If $\bfT$ is $\forall\exists$-axi\-omati\-za\-ble
then $M_G\models \bfT$. 
\end{enumerate}
If $L$ is separable, then  an 
 $\bbF$-generic filter $G$ and model $M_G$ with the above properties  exist. 
\end{thm} 

The proof of Theorem~\ref{P3} is broken up into a few lemmas. 
Family $\bfF$ shall include all sets   $\bfD_{\varphi(\bar d_F), \e}$ defined above 
as well as five other families of dense subsets of $\bbPTS$ similarly 
indexed by $L^+$-formulas $\varphi(\bar s)$ in $\Sigma$, natural numbers, and tuples of 
constants in 
$L^+$. 
A proof that each of these sets is dense in $\bbPTS$ is, being very similar to the proof 
of the density of $\bfD_{\varphi(\bar d_F), \e}$, omitted. 

The first family, indexed by 
an $L^+$-formula $\varphi(\bar d_F, x)$ such that $\varphi$ belongs to  $\Sigma$
with a single free variable 
$x$, $r\in \bbQ$, and $m\geq 1$ is defined by 
\begin{align*}
\bfC_{\varphi(\bar d_F, x),r,m}=\{p\in \bbPTS: &\bfT+p\models \inf_x\varphi(\bar d, x)>r-1/m\\
&\text{ or } (\exists j) \bfT+p\models \varphi(\bar d_F, d_j)<r\}. 
\end{align*}
The second family is indexed by  
$i,j $ and $k>0$ in $\omega$ 
(`$d$' stands for `distance') 
\[
\bfD_{d,i,j,k}=\{p\in \bbPTS: (\exists r) \bfT+p\models |d(d_i,d_j)-r|<1/k\}. 
\]
The third and fourth families are indexed by 
predicate symbols 
$P$  or functional symbols~$g$ in $L$,  sets $F\subseteq \omega$ whose cardinality is equal to the arity 
of~$P$ (or of $g$, respectively), 
 and $k>0$ in $\omega$
 \begin{align*}
\bfD_{P,F,k}&=\{p\in \bbPTS: (\exists r\in [0,1]) \bfT+p\models |P(\bar d_F)-r|<1/k\}\\
\bfD_{g,F,k}&=\{p\in \bbPTS: (\exists l\in \omega) \bfT+p\models |f(\bar d_F)-d_l|<1/k\}. 
\end{align*} 
The next family is indexed by 
 $p\in \bbPTS$  such that  $\psi^p=\inf_x \varphi(\bar d_F, x)$ for 
 some $\varphi$ in $\Sigma$: 
\[
\bfD_{\inf,p}=\{q: q\perp p\text{ or } (\exists j\in \omega) \bfT+
q\models \varphi(\bar d_F, d_j)<\e^p\}. 
\]
If $\Sigma$ consists of quantifier-free formulas and $\bfT$ is $\forall\exists$-axiomatizable then the fifth family of dense sets is 
indexed by 
 quantifier-free formulas $\varphi(\bar x,\bar y)$  (recall that all quantifier-free formulas belong to $\Sigma$) 
such that $\sup_{\bar x}\inf_{\bar y}\varphi(\bar x, \bar y)=0$ is an axiom of  $\bfT$, 
finite $F\subseteq \omega$ of the appropriate cardinality, and $k>0$: 
\[
\bfE_{\varphi(\bar d_F, x), k}=\{q\in \bbPTS: (\exists F') \bfT
+q\models \varphi(\bar d_F, \bar d_{F'})<1/k\}. 
\]

\begin{lemma} \label{C.Dense} 
Each of the sets 
$\bfC_{\varphi(\bar d_F, x),r,m}$, 
$\bfD_{d,i,j,k}$, 
$\bfD_{P,F,k}$, 
$\bfD_{g,F,k}$, 
$\bfD_{\inf,p}$, and
$\bfE_{\varphi(\bar d_F, x), k}$
is dense in $\bbPTS$ for every choice of parameters, as long as  $\varphi$ belongs to $\Sigma$. 
\end{lemma} 

\begin{proof} 
We prove that every set of the form $\bfE_{\varphi(\bar d_F, x), k}$ is dense. 
Proofs of the other cases are very similar,  and therefore omitted. 

Suppose $p\in \bbPTS$ and with $n=n^p$ find $M\models \bfT$ and $\bar a\in M^n$
that satisfies $p$. Since 
$\sup_{\bar x}\inf_{\bar y}\varphi(\bar x, \bar y)^M=0$, 
there exists  $\bar b\in M^m$ (where $m$ is the length of tuple $\bar y$) 
such that 
\[
\varphi(\bar a, \bar b)^M<1/k. 
\]
Let $F'$ be an $m$-tuple in $\omega$ such that for all $i<j<m$ and all $l<n$ 
we have $f'(j)\neq f(l)$ and $f'(i)\neq f'(j)$. 
The open  condition 
\[
q=(\max(\psi^p, \varphi(\bar d_F, \bar d_{F'})), \min(\e^p, 1/k))
\]
is satisfied in $M$ by $\bar a,\bar b$ and it extends $p$. 
Since $p$ was arbitrary, this proves that 
$\bfE_{\varphi(\bar d_F, x), k}$ is dense in $\bbPTS$. 
\end{proof} 

All parameters of  dense sets in Lemma~\ref{C.Dense} range over countable sets, 
with the exception of formulas $\varphi$ and conditions $p$. 
Suppose  $L$ is countable.  Then for every $n\in \omega$ 
we can choose a countable dense subset  $\bfD_n$ of  $\bbF_n(L)$ defined in \S\ref{S.Formulas}. 
Then   
\begin{multline*}
\bbFTS=\{
\bfD_{\varphi(\bar d_F), 1/k}, 
\bfC_{\varphi(\bar d_F, x),r,m}, 
\bfD_{d,i,j,k}, 
\bfD_{P,F,k}, 
\bfD_{g,F,k}, 
\bfD_{\inf,p}, 
\bfE_{\varphi(\bar d_F, x), k}:\\
\{ \varphi, \psi\}\subseteq \bigcup_n \bfD_n,\{i,j,k\}\subseteq \omega, F\in \omega^{<\omega}, g\in L, P\in L\}
\end{multline*}
is a countable family of dense subsets of $\bbPTS$. 

\begin{lemma} \label{L.P3.1}  
Suppose  $\bfT$ is a theory  in a 
language $L$  and $\Sigma$ is a set of $L$-formulas 
satisfying $(\Sigma 1)$--$(\Sigma 3)$.
If  $G\subseteq \bbPTS$ is an $\bbFTS$-generic filter then 
there is 
a unique $L^+$-structure $M_G$ with 
the interpretations of  $\{d_j: j\in \omega\}$ as  
a dense subset  and such that 
for all $p\in \bbPTS$ 
 the following holds: 
\begin{enumerate}
\item [$(*_p)$]  If $r\geq 0$ then 
 $(|\psi^p-r|, F^p, \e^p)\in G$ if and only if  
 \[
 M_G\models |\psi^p(\md^p)-r|<\e^p.
 \] 
\end{enumerate}
\end{lemma} 

\begin{proof} Let 
 $M_G^0=\{\md_j:j\in \omega\}$. 
Since $G$ intersects 
$\bfD_{d,i,j,k}$ for all $i,j$, and $k$, 
\[
d(\md_i,\md_j)=r  
\Leftrightarrow
(\forall k\geq 1)(\exists p\in G)  
\bfT+p\models |d(d_i,d_j)-r|<1/k
\]
 defines a metric on $M_G^0$. 
 Since $G$ intersects   
 $\bfD_{P,F,k}$ 
 for every $n$-ary predicate~$P$ in~$L$ and  every $n\in \omega$, 
for $F\in \omega^n$ 
 we can define
 \[
 P(\md_F)=r
\Leftrightarrow
(\forall k\geq 1)(\exists p\in G)  
\bfT+p\models |P(\md_F)-r|<1/k. 
\]
 Let the universe of~$M_G$ be  the metric completion of~$M_G^0$. 
Since  interpretation of every relational symbol
$P$ on $M_G^0$  respects the uniform continuity modulus associated with $P$, 
it has a unique extension to a predicate on $M_G$. 

For every function symbol $g$ in $L$ filter 
$G$ intersects all $\bfD_{g,F,k}$.  Therefore for every 
$\bar d_F$ there exists a  sequence $d_{n(j)}$, for $j\in \omega$, 
in $M_G^0$ such that 
\[
g(\md_F)^{M_G}=\lim_j \md_{n(j)}
\]
 is well-defined. 
This interpretation of $g$ respects the modulus of uniform continuity associated with $g$ and can 
therefore be extended to a function on~$M_G$. 
This completes the definition of $M_G$. 

We shall prove  $(*_p)$ 
  by induction on complexity of the formula~$\psi^p$.  

The case when $\psi^p$ is an atomic formula is immediate from the definition. 
Assume that $(*_q)$  holds for all $q$ such that $\psi^q$ is a proper 
 subformulas of $\psi^p$. 

Suppose $\psi^p=f(\psi_0(\bar d_F), \dots, \psi_{n-1}(\bar d_F))$ for $n\geq 1$, $F\in \omega^{<\omega}$,    continuous function $f$, and 
 formulas $\psi_j$, for $j<n$.  
  We shall prove the direct implication  in $(*_p)$ in the case when $r=0$. Suppose $p\in G$ and 
  let 
\[
r_j=\psi_j(\md_F)^{M_G}
\]
for $j<n$.  Since $f$ is uniformly continuous on bounded sets, we can 
fix a large enough $k\in \omega$ so that $|s_j-r_j|<1/k$ for all $j<n$ implies 
$|f(\bar s)-f(\bar r)|<\e^p$. 
Since $G$ is $\bbFTS$-generic, there are $q_j\in G$ 
  such that $q_j\in \bfD_{\psi_j(\bar d_F), 1/k}$,
for all $j<n$. 
By the inductive hypothesis we have $M\models |\psi_j(\md_F)-r_j|<1/k$ for all $j<n$. 
Therefore $M\models \psi^p(\md_F)<\e^p$, thus proving the direct implication in    $(*_p)$ in case when $r=0$. 

If $r\neq 0$ then the above argument applied when $f$ is replaced with $f-r$ proves the direct implication in  
$(*_p)$. The converse is  automatic since $M_G\models \psi^p(\md^p)=r$ for exactly one $r\geq 0$. 
 
 Suppose  
 $\psi^p$ is of the form 
  $\inf_x \varphi(\bar d_F, x)$ for some $\varphi$ for 
which the claim has been proven.   As before, we first prove the direct implication in $(*_p)$ in case 
when $r=0$. 

Suppose $p\in G$ and fix $q\in G\cap \bfD_{\inf,p}$. 
Then~$q$ and $p$ are compatible because~$G$ is a filter,  
and therefore $\bfT+q\models \varphi(\bar d_F,d_j)<\e^p$ for some  $j\in \omega$. 
Again since $G$ is a filter, the condition  $\varphi(\bar d_F,d_j)<\e^p$ belongs to $G$.  
By applying the inductive hypothesis to this condition and $\varphi$ we have
 $M_G\models \varphi(\md_F,d_j)<\e^p$ and therefore 
 $M_G\models \inf_x\varphi(\md_F,x)<\e^p$. Since $G$ was an arbitrary $\bbF$-generic 
 filter containing $p$, the direct implication in $(*_p)$ holds with $r=0$. 
 If $r\neq 0$ then the above argument applied when 
$\varphi(d_F, x)$ is replaced with $|\varphi(d_F, x)-r|$ 
 proves the direct implication in  
$(*_p)$. As before, the converse is automatic.

Since $\sup_x \varphi=1-\inf_x (1-\varphi)$, 
this covers all cases and concludes the inductive proof of $(*_p)$ for all $p$. 

It only remains to prove the uniqueness.  
But if $M$ is an $L^+$-structure that satisfies all conditions in $G$ and 
the set of interpretations $\{d_i^M: i\in \omega\}$ is dense in $M$ then 
$M$ is clearly isometrically isomorphic to $M_G$. 
\end{proof}

\begin{proof}[Proof of Theorem~\ref{P3}]
Given $L$,   $\bfT$,  and $\Sigma$, 
set $\bbF=\bbFTS$  is a  family of dense subsets of $\bbPTS$ 
which by Lemma~\ref{L.P3.1} has the property that every $\bbF$-generic filter  defines a
unique $L^+$-structure $M_G$ 
such that \eqref{I.P3.1} and 
\eqref{I.P3.1.1} hold.

\eqref{I.P3.2} 
If $\Sigma$ consists of all $L$-formulas, then 
\eqref{I.P3.1.1} implies  that $M_G\models \bfT$. 

\eqref{I.P3.3} Assume $\bfT$ is $\forall\exists$-axiomatizable and recall that  $(\Sigma 1)$ implies 
$\Sigma$ includes  all quantifier-free formulas. 
For every quantifier-free formula $\varphi(\bar x,\bar y)$ such that $\sup_{\bar x}\inf_{\bar y}\varphi(\bar x, \bar y)=0$ is an axiom of $\bfT$, 
for every $F$ of the appropriate cardinality,  and every $k>0$ the set 
$\bfE_{\varphi(\bar d_F, x), k}$ is met by the generic filter $G$. 
A simple argument now shows that
  $M_G$ satisfies all  $\forall\exists$-axioms of $\bfT$ 
and  \eqref{I.P3.3} follows.

Now suppose $L$ is countable. Then $\bbFTS$ is also countable and an $\bbF$-generic filter 
$G$  exists by Lemma~\ref{L.CtbleGeneric}. 
\end{proof}

\begin{remark} The proof of Theorem~\ref{P3} used the fact that 
for every sentence $\varphi(\bar d_F)$ in $\Sigma^+$ and $\e>0$  the set $\bfD_{\varphi(\bar d_F), \e}$ 
of conditions that decide 
the $\varphi^{M_G}$ up to $\e$
 is dense in $\bbPTS$. A deeper fact is  
worth mentioning. Even if $\varphi(\bar d_F)$ is 
an $L^+$-sentence that  does not belong 
to $\Sigma^+$ then for every $\e>0$  the set of conditions $p$ in $\bbPTS$ that decide the value of 
$\varphi(\bar d_F)^{M_G}$ for every sufficiently generic $G$ (possibly 
more than merely $\bbF$-generic, but a countable family of dense sets still suffices) 
 up to $\e$ is dense in $\bbPTS$. 
 This is a consequence of Cohen's Truth Lemma, 
 \cite[Lemma~IV.2.24]{Ku:Set}. 
\end{remark}

Principal types were defined in \S\ref{S.Principal}. 

\begin{definition} 
Suppose $\Sigma$ is a set of $L$-formulas 
which satisfies $(\Sigma 1)$--$(\Sigma 3)$ and $\bt(\bar x)$ is an $n$-ary type for some $n\geq 1$. 
  An $n$-ary 
 type~$\bt(\bar x)$ is \emph{$\Sigma$-non-principal}
if there exists $k\geq 1$ 
such that 
for every  $F\in \omega^n$ 
 the set
 (using the $\max$-distance on $M^n$)
\begin{align*} 
\bfD_{\bt(\bar x), F,k}=\{q\in \bbPTS: &M\models \bfT\text{ implies} \\
&
\inf\{\dist(\bar a, \bt(M)): M\models \psi^q(\bar a)<\e^q\} \geq 1/k\}. 
\end{align*} 
 is dense in $\bbPTS$. 
\end{definition}


\begin{thm} \label{P3.4}
Suppose $\bfT$ is a complete $L$-theory and $\Sigma$ is a set of $L$-formulas 
satisfying $(\Sigma 1)$--$(\Sigma 3)$. 
If 
$\bt$ is a $\Sigma$-non-principal type then there is a family $\bfF_{\bt}$
of dense subsets of $\bbPTS$ such that  if $G$ is $\bfF_{\bt}$-generic 
then  $M_G$  omits~$\bt$. If $L$ is separable then $\bbF_{\bt}$ can be chosen countable. 
\end{thm}

\begin{proof} Suppose $\bt(\bar x)$ is a 
$\Sigma$-non-principal $n$-type. Fix 
 $k\geq 1$ such that $\bfD_{\bt, F, k}$ is dense in $\bbPTS$ for all 
 $F\in\omega^n$. 
With $\bbFTS$ as defined before Lemma~\ref{L.P3.1}, let 
\[
\bbF_{\bt}=\bbFTS\cup \{\bfD_{\bt, F, k}: F\in \omega^n\}. 
\]
 If $G$ is $\bbF_{\bt}$-generic, 
 then (with  $\md_F$ and $M_G^0$ as in the proof of Lemma~\ref{L.P3.1}) 
 we have  $\dist(\md_F, \bt(M_G))\geq 1/k$ for all $F\in \omega^n$. 
Since $\{\md_F: F\in \omega^n\}=(M_G^0)^n$ is dense in~$M_G$, the latter model   omits~$\bt$. 

The last claim follows from the fact that $\bbFTS$ is countable if $L$ is. 
\end{proof}

The following is well-known  (\cite[\S 12]{BYBHU} or \cite[Lecture~4]{Ha:Continuous}; see
also Corollary~\ref{C.Omitting}). 

\begin{coro} \label{C3.4} Suppose $\bfT$ is a complete theory in a countable language 
and~$\bt_n(\bar x)$ is a complete type over $\bfT$ for every $n\in \omega$. 
Then the following are equivalent. 
\begin{enumerate}
\item \label{C.P3.1.1} Each $\bt_n$ is not principal. 
\item \label{C.P3.1.2} Each $\bt_n$ is omissible in a model of $\bfT$. 
\item \label{C.P3.1.3} There exists a family of dense subsets $\bbF$ of 
 $\bbPT$ such that for every $\bbF$-generic filter $G$ model~$M_G$ 
 is a model of $\bfT$ which omits all $\bt_n$. 
 \end{enumerate}
 \end{coro} 
 
 \begin{proof} The equivalence of \eqref{C.P3.1.1} and \eqref{C.P3.1.2} is 
 Lemma~\ref{L.triv},  \eqref{C.P3.1.3} clearly implies \eqref{C.P3.1.2}, 
 and the converse is Theorem~\ref{P3.4}. 
 \end{proof}

\subsection{Forcing with `certifying structures'} 
 Let $\bfT$ be a not necessarily complete theory,  let $\Sigma$ be a set of 
 formulas in the language of $\bfT$ satisfying  closure properties ($\Sigma$1)--($\Sigma$3) as in \S\ref{S.Forcing.Sigma}
 and let     $\fM$ be a nonempty 
set of models of $\bfT$. Forcing $\bbPTSM$ is defined as follows. 
Its conditions  are  triples
 \[
 p=(\psi^p, F^p, \e^p)
 \]
(we shall write $(\psi,F,\e)$ whenever $p$ is clear from the context)
  where $\psi$ is an $n$-ary formula, $F$ is an $n$-tuple of natural numbers, 
    $\e>0$, 
and $\psi(\bar d_F)< \e$
is a condition satisfied in some model in $\fM$. We shall write $\bar d^p$ instead of~$\bar d_{F^p}$.  
We let $p\geq q$ if the following holds. 
\begin{enumerate}
\item [] 
 $F^p\subseteq F^q$ and 
for every $M\in \fM$ and $\bar a$ in $M$ of the appropriate length, 
if $\psi^q(\bar a)^M<\e^q$ then $\psi^p(\bar a)<\e^p$.
\end{enumerate}
If $p\geq q$ we say that $q$ \emph{extends} $p$ or that $q$ is \emph{stronger than} $p$. 
 If $\bfT$ is a complete theory, 
then every condition consistent with $\bfT$ is realized in every model of $\bfT$ and $\bbPTSM$ is 
isomorphic to $\bbPTS$ if $\fM$ is any nonempty set of models of $\bfT$. 

A proof of Theorem~\ref{P3.1} below 
is analogous to the proof of 
Theorem~\ref{P3} and is therefore omitted. 
As before, theory $\bfT$ 
 is not assumed to be  complete and that the language $L$ is not assumed to be separable.

\begin{thm} \label{P3.1} 
Suppose  $\bfT$ is a theory  in a 
language $L$  and 
 that either $\Sigma$ consists of all $L$-formulas or that $\bfT$ is $\forall\exists$-axiomatizable and 
$\Sigma$ includes all quantifier-free formulas.   
 Then there is a  family $\bfF$ of dense subsets of $\bbPTSM$ with the following property. 
If $G$ is an   $\bfF$-generic filter, then there exists a unique
 $L^+$-structure $M_G$ which is a model of $\bfT$ and 
 has the interpretations of  $\{d_j: j\in \omega\}$ as
a dense subset.  
 \qed
\end{thm}

\section{Forcing} 
\label{S.Set-theoretic}
In the present section we discuss the relation with  the set-theoretic forcing, 
in which one constructs a generic extension of a model of ZFC. 
Some acquaintance with the method of 
forcing is required (e.g. \cite{Ku:Set} or \cite{schindler2014set}).  
Readers not interested in forcing may  want to  skip ahead to \S\ref{S.Uniform}.

\subsection{Cohen forcing} 

Recall that two forcing notions $\bbP_0$ and $\bbP_1$ are \emph{forcing equivalent} 
if there is a poset $\bbP_3$ and order-preserving maps $f_j \colon \bbP_j\to \bbP_3$ 
such that $f_j[\bbP_j]$ is dense in $\bbP_3$ for $j<2$ (\cite[Definition~IV.4.25]{Ku:Set}). 
Forcing equivalence of $\bbP_0$ and $\bbP_1$ is 
 equivalent to asserting that for every  $V$-generic filter  $G_j$ in $\bbP_j$ 
there exists a filter $G_{1-j}\subseteq \bbP_{1-j}$ in $V[G_j]$ such that $V[G_j]=V[G_{1-j}]$, for $j<2$
(\cite[Lemma~IV.4.6]{Ku:Set}). 

 The poset for adding a Cohen real has   nonempty rational intervals in $[0,1]$  
as conditions,  ordered by the inclusion. 
  It is forcing-equivalent to every forcing with a countable dense set and no minimal elements 
  (this is 
immediate from e.g. \cite[Lemma~IV.4.26]{Ku:Set}).

\begin{lemma} \label{L.Cohen} \label{L.Cohen.1} 
If $\bfT$ is a theory in a countable language and $ \fM$ is a class of its models
then each of $\bbPTS$ and 
 $\bbPTSM$ 
 has a countable dense subset, 
and is therefore equivalent to the standard forcing for adding a Cohen real. 
\end{lemma} 

\begin{proof} By  the separability of $L$, for every $n$
 there exists a countable set of 
formulas $\bfD_n$ which is $d_{\bfT}$-dense in $\bbF_n(\bfT)$. 
We shall  produce a countable dense set $E$ of conditions  
in $ \bbPTS$ by using  a standard continuous functional calculus trick (\S\ref{S.CFC}).  

 For each $m\in \omega$ let $f_m(t)=\max(t-1/m, 0)$. 
Consider the set $\bfC$  of all 
conditions  in $\bbPTS$ of the form $f_m(\varphi(\bar d))<1/n$, for $m,n$ in $\omega$
and $\varphi\in \bfD$. If $\varphi(\bar d)<\e$ is a condition then there exist $M\models\bfT$ 
and a tuple $\bar a$ in $M$ such that $M\models \varphi(\bar a)=r<\e$. 
Therefore  $f_n(\varphi(\bar d))<\e$ is in $\bbPTS$ for all $n>1/(\e-r)$. 

We claim that the set $\bfC$ of all conditions of this sort  is dense in $\bbPTS$. Since this set 
is countable this will conclude the proof. 
Take a condition $\psi(\bar d)<\e$. Fix $n>1/\e$ and let $\varphi(\bar x)\in \bfD$ be such that 
$d_\infty(\varphi,\psi)<1/(2n)$. For a large enough $m>2n$ we have that 
 $f_m(\varphi(\bar d))<1/m$ is a condition in~$\bfC$
 stronger than $\psi(\bar d)<\e$. 

A proof in the case of $\bbPTSM$ is analogous. 
\end{proof}

The \emph{covering number},   $\cov(\meager)$,  
 for the  ideal of meager (i.e. first category) subsets of $\bbR$ is  
the minimal cardinality of  a family of meager sets  required to 
cover the real line (\cite[Definition~III.1.2 and Definition~III.1.6]{Ku:Set}). 
Since every separable, completely metrizable space $X$ with no isolated points 
has a dense $G_\delta$ subset homeomorphic to the 
 Baire space, $\cov(\meager)$ is equal to the minimal number of first category 
 subsets of $X$ required to cover~$X$.
  
\begin{coro} \label{C.Omitting} 
If $\bfT$ is a complete theory in a countable language, $\kappa<\cov(\meager)$, 
 and $\bt_\gamma$ for $\gamma<\kappa$  
is a set of complete non-principal types over~$\bfT$, then 
$\bfT$ has a separable model that omits all $\bt_\gamma$. 
\end{coro}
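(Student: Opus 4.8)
The plan is to combine Lemma~\ref{L.Cohen} (which says $\bbPT$, equivalently $\bbPTS$ for $\Sigma$ the set of all formulas, has a countable dense subset and is thus forcing-equivalent to Cohen forcing) with Proposition~\ref{P3} and Proposition~\ref{P3.4}. The point is that $\cov(\cM)$ is exactly the least cardinal $\kappa$ such that there is a family of $\kappa$ dense subsets of Cohen forcing admitting no filter meeting all of them; equivalently, for $\kappa<\cov(\cM)$, every family of $\kappa$ dense sets has a generic filter. So I want to realize ``$\bfT$ has a separable model omitting all $\bt_\gamma$'' as ``there is a filter on $\bbPT$ meeting a suitable family of fewer than $\cov(\cM)$ many dense sets.''

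First I would fix the countable family $\bfF$ of dense subsets of $\bbPT$ provided by Proposition~\ref{P3} (so that an $\bfF$-generic filter yields a separable model $M_G\models\bfT$ with the $d_j$'s dense). Then, since each $\bt_\gamma$ is complete and non-principal, hence in particular $\Sigma$-non-principal for $\Sigma$ the set of all formulas, Proposition~\ref{P3.4} supplies for each $\gamma<\kappa$ a \emph{countable} family $\bfF_{\bt_\gamma}$ of dense subsets of $\bbPT$ such that any $\bfF_{\bt_\gamma}$-generic filter produces $M_G$ omitting $\bt_\gamma$. Let $\bfF^* = \bfF \cup \bigcup_{\gamma<\kappa}\bfF_{\bt_\gamma}$; this is a union of a countable set with $\kappa$ countable sets, so $|\bfF^*|\leq \kappa\cdot\aleph_0 = \max(\kappa,\aleph_0) < \cov(\cM)$ (using $\kappa<\cov(\cM)$ and $\aleph_0\le\cov(\cM)$, with the case $\kappa$ finite handled by the ordinary Omitting Types Theorem or absorbed trivially).

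Next I would invoke Lemma~\ref{L.Cohen}: $\bbPT$ has a countable dense subset $\bbD$, and meeting a dense subset of $\bbPT$ is the same as meeting its trace on $\bbD$, so working inside $\bbD$ we are exactly in the setting of Cohen forcing on a countable poset. By the standard characterization of $\cov(\cM)$ (see \cite{BarJu:Book}), since $|\bfF^*|<\cov(\cM)$ there is a filter $G$ on $\bbD$ — equivalently on $\bbPT$ — meeting every member of $\bfF^*$. Applying Proposition~\ref{P3} to this $G$ gives a separable $M_G\models\bfT$, and since $G$ also meets each $\bfF_{\bt_\gamma}$, Proposition~\ref{P3.4} guarantees $M_G$ omits every $\bt_\gamma$, completing the proof.

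I do not expect a serious obstacle here; the result is essentially the ``$\cov(\cM)$ many types'' strengthening of the Omitting Types Theorem obtained by the usual forcing-with-Cohen-conditions argument, and all the genuine work (the genericity characterization of $\cov(\cM)$, the construction of $M_G$, the density of the type-omitting sets) has already been isolated in Lemma~\ref{L.Cohen}, Proposition~\ref{P3}, and Proposition~\ref{P3.4}. The only mild subtlety is bookkeeping: one must check that $\Sigma$-non-principality for $\Sigma$ = all formulas is implied by non-principality as defined in \S\ref{S.Principal} (immediate, since the condition on $q$ is literally the same, now quantified over all conditions rather than a sub-poset), and that the cardinal arithmetic $\kappa\cdot\aleph_0<\cov(\cM)$ goes through, which it does because $\cov(\cM)$ is uncountable. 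This is the step I would double-check but it is routine.
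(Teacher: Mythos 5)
Your proof is correct, and it is essentially built from the same ingredients as the paper's (Lemma~\ref{L.Cohen}, Proposition~\ref{P3}, Proposition~\ref{P3.4}, and the $\mathrm{MA}(\text{countable})$ characterization of $\cov(\cM)$), but it follows a genuinely different technical route at the final step. You explicitly collect fewer than $\cov(\cM)$ dense sets---$\bfF$ to build $M_G$ and $\bfF_{\bt_\gamma}$ to omit each $\bt_\gamma$---and then invoke the genericity characterization of $\cov(\cM)$ directly. The paper instead takes a transitive model $\cN$ of a large enough fragment of ZFC of cardinality $\kappa$ containing $\bfT$, the $\bt_\gamma$'s, and the countable dense subset of $\bbPT$, finds a filter $G$ meeting all $\kappa$-many dense sets in $\cN$ (i.e., $G$ is $\cN$-generic), observes that $\cN[G]\models\text{``}M_G\text{ omits }\bt_\gamma\text{''}$ because $\bbPT$ forces this, and finally transfers this to the universe via $\bPi^1_1$-absoluteness. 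The trade-off: your version is more elementary (no transitive models, no absoluteness), but requires some care in verifying that the specific countable families produced by Proposition~\ref{P3.4} do the job; the paper's version is standard forcing practice that is more robust---it never has to name the relevant dense sets, since genericity over $\cN$ automatically picks up all of them---at the cost of invoking Shoenfield-type absoluteness. One very small point to tidy in your write-up: a dense subset $\bfD$ of $\bbPT$ does not literally ``trace'' to a dense subset of the countable dense sub-poset $\bbD$; one should instead pass to $\{p\in\bbD : p\leq q\text{ for some } q\in\bfD\}$, which is dense in $\bbD$ and whose members witness meeting $\bfD$ after upward closure. This is routine bookkeeping, not a gap.
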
 

\begin{proof} 
By  Theorem~\ref{P3.4} 
to every $\bt_\gamma$ we associate a dense $\calD_\gamma\subseteq \bbPT$ such that 
if $G$ intersects $\calD_\gamma$ and all dense sets in the family 
$\bbF$ from Theorem~\ref{P3}, then 
 $M_G$ is a model of $\bfT$ 
which omits each $\bt_\gamma$. 

 By Lemma~\ref{L.Cohen.1} the space of all 
 filters in $\bbPTS$ is homeomorphic to the Cantor set, hence it is  
 compact, metrizable and with no isolated points.  
 Therefore  $\cov(\meager)$ is equal to the minimal number of 
 dense subsets of $\bbPTS$ not met by a single filter. 
 We can therefore choose $G$ to meet all dense sets in $\bbF\cup \{\calD_\gamma: \gamma<\kappa\}$ and the generic  model $M_G$ is as required. 
\end{proof} 

\subsection{Absoluteness} \label{S.Absoluteness} In \S\ref{S.Atomic} we consider some properties of $M_G$. 
In order to put these properties  in the proper context, 
we include a brief discussion of absoluteness 
for the convenience of the reader. 
A statement $\varphi(\bar x)$ of ZFC is \emph{absolute} if  for any two transitive models
$V\subseteq W$ of a sufficiently large fragment of ZFC and  for all parameters  $\bar a$ in $V$ of the appropriate 
length one has 
\[
\varphi(\bar a)^V\Leftrightarrow \varphi(\bar a)^W
\]  
(see   \cite[\S II.4]{Ku:Set}). 
If $V\subseteq W$ are models of a large enough fragment of ZFC then 
a complete metric structure $N$ in $V$ is identified with its completion $\tilde N$ in the larger model $W$.   
A property $\Theta$ of metric structures  is \emph{absolute} if for any two transitive models
$V\subseteq W$ 
of a sufficiently large fragment of ZFC, 
for every metric structure $N$ in $V$ we have $\Theta(N)^V\Leftrightarrow \Theta(\tilde N)^W$. 
A definition of a subset $P(x,N)$ of a metric structure $N$ (such as the set of types realized in a model), 
is \emph{absolute}  if for every $a\in N^V$ we have $P(a,N)^V$ if and only if $P(a,\tilde N)^W$.

Given a  language $L$ in the logic of metric structures  and a string of characters $\varphi$, 
the assertion `$\varphi$ is an  $L$-formula' 
is absolute. This essentially follows from  
\cite[Lemma~II.4.14, Theorem~II.4.15, Corollary~II.4.17]{Ku:Set} 
and the discussion in between, where the analogous statement for   the first-order logic was proved. 
The semantics of $L$---i.e. the definition of the interpretation of a formula $\varphi$ in a metric structure---is also proved 
to be absolute by a routine induction on the complexity of $\varphi$. 

Suppose $V\subseteq W$ are 
 transitive models of a sufficiently large fragment of ZFC and $L$ is a language in $V$. 
The linear space of $L$-formulas in $V$ (\S\ref{S.Formulas})  
is in general a proper subspace of the space of $L$-formulas in $W$. 
It is however always dense with respect to the metric defined in 
\S\ref{S.preliminaries} 
 (this is a consequence of the Stone--Weierstrass theorem, 
since all  formulas built by  using polynomial functions with rational coefficients belong to $V$), 
and therefore the space of $L$-formulas in $W$ is the  completion of the space of $L$-formulas
in $V$. An $L$-theory is identified with a continuous functional on the space of $L$-formulas. 
Therefore  every  $L$-theory $\bfT$ in $V$  has a unique extension to an $L$-theory in $W$, also denoted $\bfT$.   
By identifying a $k$-type with a functional on the space of $L$-formulas with free variables included in $x_j$, for $j<k$,   
we similarly identify a type over $\bfT$ in $V$ with its unique extension to a type over $\bfT$ in $W$.

\begin{prop} Suppose $L$ is a language of metric structures, $M$ is an $L$-structure,  $\varphi$ is an $L$-formula, 
 $\bar a$ is a tuple in  $M$ of the appropriate length, and $r\in \bbR$. 
The following are absolute between models of a large enough fragment of ZFC. 
\begin{enumerate}
\item \label{P.ABS.Sat} $\varphi(\bar a)^M=r$.   
\item  \label{P.ABS.Type} Type $\tp_{M}(\bar a)$ of  an $n$-tuple $\bar a$ in $M$. 
\item \label{P.ABS.Th} $\Th(M)$. 
\item \label{P.ABS.Types} The set of types realized in $M$. 
\item \label{P.ABS.d} the distance $d(\bt,\bs)$ between complete types as defined in \S\ref{S.Metric}. 
\item  \label{P.ABS.principal} The assertion `$\bt$ is principal.'
\end{enumerate}
\end{prop} 

\begin{proof} Suppose $V\subseteq W$ are models of a large enough fragment of  ZFC and~$M$ is in $V$.  

\eqref{P.ABS.Sat} 
Since~$M$ is dense in $\tilde M$ and the interpretations of $L$-formulas are uniformly continuous, 
the absoluteness of  $\varphi(\bar a)^M=r$ is  proved 
 by  induction on the complexity of~$\varphi$. 
 Clearly \eqref{P.ABS.Sat} implies  
 \eqref{P.ABS.Type}. 
 
\eqref{P.ABS.Th} 
The  theory of $M$ is  
identified with  an affine functional on $\frW_0(L)$,  
has  the unique continuous extension to an affine functional 
on $\tilde{\frW_0(L)}$ 
of the theory of $M$ to 
the completion 
 $\tilde {\frW_0(L)}$ of (the ground-model) $\frW_0(L)$.
 
\eqref{P.ABS.Types} The set of types realized in $M$ is $\bSigma^1_1$ and therefore absolute by Lemma~\ref{L.Sigma11}. 

\eqref{P.ABS.d}  is a consequence of Lemma~\ref{L.d.Borel}. 

\eqref{P.ABS.principal} By \eqref{P.ABS.d}, for a type $\bt$ (complete or not) having 
a metric $\e$-neighbourhood which is nowhere dense in 
the logic topology is absolute.  
\end{proof}

\subsection{Omitting types in  the generic model $M_G$}\label{S.Atomic} 
Recall (\S\ref{S.Forcing.Sigma}) 
that~$\bbPT$ denotes the so-called `infinite' forcing, i.e.~$\bbPTS$ in  the case when~$\Sigma$ is 
the set of all formulas of the language of~$\bfT$. 

\begin{lemma} \label{L.4.5} 
If  a complete theory $\bfT$ in a countable language  
has an atomic model $N$ then $\bbPT$ forces that $M_G$ is atomic, 
and therefore isometric to the completion of $N$.  
 \end{lemma} 

\begin{proof} 
By Theorem~\ref{P3.4} there are countably many dense subsets of 
$\bbPT$ such that if $G$ meets each one of them then 
every nonprincipal type is omitted in $M_G^n$. 
Therefore every complete $n$-type for every $n\geq 1$ 
realized in $M_G$ is principal and $M_G$ is  an 
atomic model of $\bfT$. 
By uniqueness of the atomic model of $\bfT$ (\cite[Corollary~12.9]{BYBHU}), 
  $M_G$ is isometric to the completion of $N$. 
\end{proof}

Lemma~\ref{L.4.5} can be recast as the assertion  that if $\bfT$ has an atomic model then 
 every omissible type is forced to be omitted in the generic model. 
Surprisingly,  the assumption that $\bfT$ has an atomic model cannot be dropped from this assertion (Theorem~\ref{C.generic}).

\subsection{Strong homogeneity of $\bbPTS$}\label{S.strong-homogeneity}
A forcing notion  is \emph{homogeneous} if for any two conditions $p$ and $q$ there exists an automorphism~$\Phi$ such that~$\Phi(p)$ is compatible with $q$. Since the Cohen forcing is homogeneous, 
  $\bbPTS$ is equivalent to a homogeneous forcing by Lemma~\ref{L.Cohen}. This does not imply that $\bbPTS$ is
  homogeneous itself, but   
we shall prove that even more is true in case when $\bfT$ is complete. 
By  $S_\infty$ we denote
 the group of \emph{all} permutations of $\omega$.
To a permutation $h\in S_\infty$ 
we  associate an automorphism $\Phi_h$ of $\bbPTS$ which sends $d_j$ to $d_{h(j)}$ 
for all $j\in \omega$ (writing $h[F]=(h(f_0), \dots, h(f_{n-1}))$ where $n$ is the length of $F$)
 \[
 \Phi_h((\psi,  F, \e))= (\psi, h[F], \e). 
\]

\begin{lemma} \label{L.Homogeneous} 
Assume  $\bfT$ is a complete  $L$-theory and $\Sigma$ is a set of $L$-formulas 
satisfying $(\Sigma 1)$--$(\Sigma 3)$. 
For any two conditions $p_1$ and $p_2$ 
in $\bbPTS$ there is $h\in S_\infty$ such that $p_1$ and $\Phi_h(p_2)$ are compatible. 
\end{lemma}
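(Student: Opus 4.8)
The plan is to reduce the problem to a statement about the supports of the two conditions. Write $p_1 = (\psi_1(\bar x), F_1, \e_1)$ and $p_2 = (\psi_2(\bar x), F_2, \e_2)$, where $F_1$ and $F_2$ are finite tuples of natural numbers. Since $S_\infty$ acts on $\bbPTS$ and any permutation $h$ with $h(F_2) \cap F_1 = \emptyset$ will move the support of $p_2$ off that of $p_1$, the first step is just to choose such an $h$: pick a finite injection sending the entries of $F_2$ to natural numbers not appearing in $F_1$ (and not in $F_2$ itself, to keep things disjoint), and extend it to a permutation of $\omega$. Then $\alpha_h(p_2) = (\psi_2(\bar x), h(F_2), \e_2)$ has support $h(F_2)$ disjoint from $F_1$.

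The second step is to verify that $p_1$ and $\alpha_h(p_2)$ are compatible, i.e.\ that some condition extends both. Consider $q = (\max(\psi_1(\bar x), \psi_2(\bar x')), F_1 \cup h(F_2), \min(\e_1, \e_2))$, where $\bar x$ are the variables attached to the coordinates in $F_1$ and $\bar x'$ those attached to $h(F_2)$; because these index sets are disjoint, the two formulas act on disjoint blocks of the $d_j$'s. To see $q \in \bbPTS$ we need the condition $\max(\psi_1(\bar d_{F_1}), \psi_2(\bar d_{h(F_2)})) < \min(\e_1,\e_2)$ to be consistent with $\bfT$. Here is where completeness of $\bfT$ enters: both $\psi_1(\bar d_{F_1}) < \e_1$ and $\psi_2(\bar d_{h(F_2)}) < \e_2$ are individually consistent with $\bfT$, so over a complete theory $\bfT$ proves $\inf_{\bar x} \psi_1(\bar x) < \e_1$ and $\inf_{\bar x'} \psi_2(\bar x') < \e_2$; since the constants $\bar d_{F_1}$ and $\bar d_{h(F_2)}$ are fresh and disjoint, we may realize witnesses for both infima simultaneously in a single model of $\bfT$, so the conjoined condition is consistent. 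Once $q \in \bbPTS$, it is immediate from the definition of the ordering that $q \leq p_1$ (since $\max(\psi_1,\psi_2) < \min(\e_1,\e_2) \vdash \psi_1 < \e_1$ and $F_1 \subseteq F_1 \cup h(F_2)$) and likewise $q \leq \alpha_h(p_2)$.

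The main obstacle I anticipate is bookkeeping around the variables versus the constants: one must be careful that ``compatible'' in $\bbPTS$ refers to conditions on the $d_j$, not on abstract tuples of variables, so the disjointness of $F_1$ and $h(F_2)$ is exactly what allows the two syntactic conditions to be combined without one constraining the other. The use of completeness of $\bfT$ is genuinely needed here — as the paper notes in \S\ref{S.Forcing.Sigma}, for a complete theory every condition consistent with $\bfT$ is certified in every model, so there is no subtlety about which models witness the two infima; for an incomplete theory this step could fail because the models witnessing $\psi_1 < \e_1$ and $\psi_2 < \e_2$ might be incompatible. Everything else is routine: the construction of $h$ is elementary and the verification that $q$ extends both conditions is a direct unwinding of definitions.
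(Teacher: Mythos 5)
Your argument contains a genuine gap in the verification that your proposed $q$ is actually an element of $\bbPTS$. You set $q = (\max(\psi_1(\bar x), \psi_2(\bar x')),\ F_1 \cup h(F_2),\ \min(\e_1,\e_2))$, and you claim this is consistent with $\bfT$ because we can simultaneously realize witnesses for both infima. But realizing $\psi_1(\bar a) < \e_1$ and $\psi_2(\bar b) < \e_2$ only gives you $\max(\psi_1(\bar a), \psi_2(\bar b)) < \max(\e_1,\e_2)$, not $< \min(\e_1,\e_2)$. If, say, $\e_2 < \e_1$ and $\inf_{\bar x}\psi_1(\bar x)$ lies in the interval $[\e_2, \e_1)$, then no choice of $\bar a$ makes $\psi_1(\bar a) < \min(\e_1,\e_2) = \e_2$, so the condition $\max(\psi_1(\bar d_{F_1}), \psi_2(\bar d_{h(F_2)})) < \min(\e_1,\e_2)$ is inconsistent with $\bfT$ and $q \notin \bbPTS$. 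Note that replacing $\min$ by $\max$ fixes consistency but breaks the extension relation $q \leq p_1$, so neither choice works as stated.

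The missing ingredient is the normalization step the paper performs at the outset: by Lemma~\ref{L.CFC} (continuous functional calculus), every condition is equivalent to one with the same support whose bound is $1$ (or any fixed $\e$), and this equivalence respects the ordering of $\bbPTS$. After replacing $p_1$ and $p_2$ by equivalent conditions with a common $\e$, your construction goes through verbatim with $q = (\max(\psi_1, \psi_2),\ F_1 \cup h(F_2),\ \e)$: the condition is consistent because both infima lie strictly below $\e$, and $q$ extends both $p_1$ and $\alpha_h(p_2)$ by the monotonicity of $\max$. The rest of your argument — choosing $h$ so that $h(F_2)$ is disjoint from $F_1$, and the observation about fresh disjoint constants permitting simultaneous realization of both witnesses in a single model (this is where completeness of $\bfT$ is used) — is correct and matches the paper's approach. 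Only the normalization of the $\e$'s is missing, but without it the proof as written does not go through.
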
 

\begin{proof} Let $p_j=(\psi_j, F_j, \e_j)$ for $j<2$. We shall write $d(j)$ and $x(j)$ 
for $d_{F_j}$
and $x_{F_j}$, respectively. 
By Lemma~\ref{L.CFC} we 
 may assume $\e_1=\e_2=1$.  
Since $\bfT$ is complete we have  
$\bfT\models \inf_{\bar x(j)}\ \psi_j(\bar x(j))<1$
for $j<2$ and therefore 
\[
\bfT\models \max(\inf_{\bar x(1)} \psi_0(\bar x(0)), \inf_{\bar x(2)}\psi_1(\bar x(1)))<1.
\]
Let   $h$ be such that the tuples $h[F_1]$ and  $F_2$ have no common entries.
Then 
\[
q=(\max(\psi_0(\bar d(0)),\psi_2(\bar d_{h(F_1)}), F_0\cup h(F_1), \e)
\]
is a condition in $\bbPTS$ which extends both $p_0$ and $\Phi_h(p_1)$. 
\end{proof}

In the following $M_G^0$ denotes the countable dense submodel of $M_G$
whose universe consists of interpretations
of  constants $d_j$, for $j\in \omega$, 
  as defined in the proof of Theorem~\ref{P3}. 
We introduce a convenient ad-hoc terminology. 
A statement $\Theta(x)$ of ZFC (possibly with parameters) 
is \emph{symmetric} if for every generic filter $G$ and every 
$h\in S_\infty$ we have 
\[
\Theta(M^0_G)\leftrightarrow \Theta( M_{\Phi_h(G)}^0).
\] 
For example, ``$M_G$ omits type $\bt$'' and ``$M_G\models \bfT$''  
are both symmetric but ``$d(\md_1, \md_2)<1/2$'' is not. 

\begin{coro} \label{C.Homogeneous} 
Assume  $\bfT$ is a complete  $L$-theory and $\Sigma$ is a set of $L$-formulas 
satisfying $(\Sigma 1)$--$(\Sigma 3)$.
 If $\Theta(\bar y)$ is a symmetric  statement of ZFC  with  parameters in the ground model, 
then $\bbPTS$ either  forces $\Theta(M_G^0)$ 
 or it forces $\lnot \Theta(M_G^0)$. 

In particular, for every ground-model 
type $\bt$ forcing notion $\bbPTS$ either forces that $M_G$ realizes $\bt$ or it forces that 
$M_G$ omits $\bt$. 
 \end{coro}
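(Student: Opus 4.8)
The plan is to derive this as a standard consequence of the homogeneity established in Lemma~\ref{L.Homogeneous}. The key point is that $M_G$ and $M_G^0$ are \emph{canonical} names in the forcing language of $\bbPTS$: $M_G^0$ is literally the universe $\{\md_j : j\in\bbN\}$ together with the generically-determined metric and interpretations of function and predicate symbols, all of which are read off from $G$ via the recipe in the proof of Proposition~\ref{P3}; and $M_G$ is the completion of $M_G^0$, again computed uniformly. Because $\alpha_h$ sends $d_j$ to $d_{h(j)}$, the induced action on $\bbPTS$-names carries the name $M_G^0$ to an isomorphic copy of itself (the isomorphism being the relabeling $\md_j\mapsto\md_{h(j)}$), and likewise carries $M_G$ to an isomorphic copy. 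Since $\Theta$ is a statement of ZFC and isomorphism invariance of such statements when the parameters are also carried along by $\alpha_h$ (the ground-model parameters $\bar y$ are fixed by $\alpha_h$), we get $\alpha_h(p)\Vdash\Theta(M_G,M_G^0)$ whenever $p\Vdash\Theta(M_G,M_G^0)$.

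\textbf{Main steps.} First I would record that for every $h\in S_\infty$, the automorphism $\alpha_h$ of $\bbPTS$ extends to an automorphism of the Boolean-valued universe which sends the name $M_G$ to (a name for) a structure isomorphic to $M_G$ and the name $M_G^0$ to one isomorphic to $M_G^0$, via the map induced by $\md_j\mapsto\md_{h(j)}$; ground-model parameters are untouched. Hence for any ZFC-formula $\Theta$ with ground-model parameters, $p\Vdash\Theta(M_G,M_G^0)$ implies $\alpha_h(p)\Vdash\Theta(M_G,M_G^0)$. Second, suppose toward a contradiction that $\bbPTS$ neither forces $\Theta$ nor forces $\lnot\Theta$; then there are conditions $p_1\Vdash\Theta(M_G,M_G^0)$ and $p_2\Vdash\lnot\Theta(M_G,M_G^0)$. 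By Lemma~\ref{L.Homogeneous} there is $h\in S_\infty$ with $p_1$ and $\alpha_h(p_2)$ compatible; let $q$ be a common extension. By the previous step $\alpha_h(p_2)\Vdash\lnot\Theta(M_G,M_G^0)$, so $q$ forces both $\Theta(M_G,M_G^0)$ and $\lnot\Theta(M_G,M_G^0)$, a contradiction. This proves the dichotomy. For the last sentence, note that ``$M_G$ realizes $\bt$'' is expressible as a statement about $M_G$ with the ground-model parameter $\bt$ (the set of conditions in $\bt$ belongs to the ground model), so the dichotomy applies with this $\Theta$.

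\textbf{Main obstacle.} The only real content beyond bookkeeping is checking that $\alpha_h$ genuinely lifts to an isomorphism of the names $M_G$ and $M_G^0$ — i.e., that the generic construction of Proposition~\ref{P3} is ``equivariant'' under relabeling the constants $d_j$. This amounts to verifying that each defining clause (the metric $d(\md_i,\md_j)=r\iff\{p:\bfT+p\vdash|d(d_i,d_j)-r|<\e\}\in G$ for all $\e>0$, and the analogous clauses for predicate and function symbols, and then taking the completion) commutes with the substitution $d_j\mapsto d_{h(j)}$, because the provability relation $\vdash$ is insensitive to renaming the new constants. This is routine but is the step one must actually write down; everything else is the generic homogeneity argument above.
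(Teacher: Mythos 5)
Your proof is correct and follows essentially the same route as the paper's: both rest on Lemma~\ref{L.Homogeneous} together with the standard lifting of the permutation automorphism $\alpha_h$ to the forcing names $\dot M_G$ and $\dot M_G^0$. The only cosmetic difference in structure is that you argue by contradiction (take $p_1\Vdash\Theta$ and $p_2\Vdash\lnot\Theta$ and collide them), whereas the paper fixes a single deciding $p$ and shows every $q$ must decide the same way; these are trivially equivalent. In fact your phrasing is slightly more careful than the paper's at the one delicate point: the paper asserts that $\alpha_h$ ``sends $M_G^0$ to itself and $M_G$ to itself,'' which is an abuse --- what is literally true, and what you say, is that $\alpha_h(\dot M_G^0)$ is forced to be the relabeled structure isomorphic to $M_G^0$ via $\md_j\mapsto\md_{h(j)}$, and similarly for $M_G$, with the ground-model parameters $\bar y$ fixed. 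This is exactly the right formulation, and it makes explicit (as the paper does not) that the dichotomy is being applied to formulas $\Theta$ invariant under such isomorphisms --- which of course covers the stated application to realizing/omitting a type $\bt$, since that property is determined by the isomorphism type of the structure.
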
 

\begin{proof} Fix  $p\in \bbPTS$ which decides $\Theta(M_G^0)$. By replacing $\Theta$ with its negation if 
needed, we may assume that $p$ forces $\Theta(M_G^0)$. 
For every $q\in \bbPTS$ by  Lemma~\ref{L.Homogeneous} there exists an $h\in S_\infty$   
such that $\Phi_h(p)$ is compatible with $q$. 
But $\Phi_h$ is an automorphism of  $\bbPTS$ that sends $M_G^0$ to itself and $M_G$ to itself, and since $\Theta$ is symmetric 
$\Phi_h(p)$ forces  $\Theta(M_G^0)$. 
This implies that every condition in $\bbPTS$ decides $\Theta(M_G^0)$ the same way that $p$ does. 
\end{proof}
 
Let $\bbPT$ be a forcing notion of the form $\bbPTS$ or $\bbPTSM$. 
 To every first-order property $\Theta$ of a metric structure one can associate a countable
 family~$\bbF_\Theta$ of dense subsets of $\bbP$  
such that for every $p\in \bbP$  
we have $p\forces\Theta(M_G)$
if and only if for every $\bbF_\Theta$-generic $G\subseteq \bbP$ containing $p$ 
we have $\Theta(M_G)$. 
Therefore it is not necessary to pass to the forcing extension in order to find 
a sufficiently generic model  $M_G$.

If $\bfT$ is a complete theory in a countable language that has an atomic model then 
Lemma~\ref{L.4.5} implies that $\bbPT$ forces $M_G$ is atomic. 
In particular,~$\bbPT$ forces that $M_G$ omits a ground-model type $\bt$ if and only if it
 is non-principal.  The set of all nonprincipal $n$-types is closed in $S_n(\bfT)$ 
because a type is principal if and only if it is an isolated point
 in the logic topology, by \cite[Lecture 3]{Ha:Continuous}.

 If $\bfT$ does not have an atomic model 
 then Lemma~\ref{L.forced} below gives a complexity estimate for the set of types forced 
 to be omitted in $M$. 
  If a theory~$\bfT$ is not complete then the theory of $M_G$, 
as well as the set of types omitted in~$M_G$,  depends on the choice of $G$.

\begin{lemma} \label{L.forced} Suppose $\bfT$ is a  theory in a countable language $L$
and $\Sigma$ is a set of $L$-formulas satisfying $(\Sigma 1)$--$(\Sigma 3)$. 
\begin{enumerate}
\item   For every $q\in \bbPTS$ the set of all types forced by $q$ 
 to be omitted in~$M_G$ is a $\mathbf \bPi^1_1$-set. 
 \item  The set of all types such that some $q\in \bbPTS$ forces   
  to be omitted in~$M_G$ is a $\mathbf \bPi^1_1$-set. 
\item If  $\bfT$ is in addition complete then 
 for every type $\bt$ we have that $\bbPTS$ either forces that 
  $\bt$ is realized in $M_G$ or it 
forces that $\bt$ is omitted in~$M_G$. 
\end{enumerate}
 \end{lemma} 

\begin{proof} 
Let $\bbP_0$ be 
a fixed countable dense subset of $\bbPTS$ as in Lemma~\ref{L.Cohen}. 
A name  for a function $h\colon \omega\to\omega$ 
can be identified with a set of triplets $(p,m,n)$ such that $p\forces \dot h(m)=n$. 
Such set $Z$ of triplets is a name for a function if and only if for every $m\in \omega$ 
the following hold.
\begin{enumerate}
\item 
The set of 
conditions $p$ for which  there exists $n$ satisfying 
$(p,m,n)\in Z$ is dense in $\bbP_0$, and 
 \item If $p$ and $q$ are compatible conditions in $\bbP_0$, $(p,m,n)\in Z$, and $(q,m,k)\in Z$, 
 then $n=k$. 
 \end{enumerate}
The set of such names is clearly  a Borel subset of $\bbP_0\times \omega^2$.  

(1) 
Fix  $q\in \bbPTS$. Lemma~\ref{L1+} implies that
$q$ does not force that $\bt$ is omitted in $M_G$ if and only 
if there exists $p\leq q$ in $\bbP_0$ and a name $\dot h$ for an infinite branch of  
 the tree $T_{D,\bt}$ (with $D=M_G^0$).  Condition $p$ forces that~$\dot h$ is an infinite 
 branch of $T_{D,\bt}$ if and only if for every $n$ the set of $r\in \bbP_0$ such 
 that~$r$ decides the values of $\dot h(j)$, for $j<n$, and that   
($\bt_\omega 1$) and ($\bt_\omega 2$) of \S\ref{S.pomega} hold for $j<n$.
The set of pairs $(p,\dot h)$  where $\dot h$ is a name for a function and these two conditions hold is clearly 
Borel.

(2) This is a consequence of (1) because $\bbPT$ has a countable dense set and a
countable union of $\bPi^1_1$ sets is $\bPi^1_1$. 

(3) 
This is an immediate consequence of Corollary~\ref{C.Homogeneous}. 
\end{proof}

\section{Uniform sequences of types}\label{S.Uniform}
Suppose $\bfT$ is a theory in language $L$. 
For $m\in \omega$ 
a sequence~$\bt_n$, for $n\in \omega$,  of $m$-ary types over $\bfT$ is \emph{uniform} if 
there are $m$-ary formulas $\varphi_i(\bar x)$ for $i\in \omega$ such that 
\begin{enumerate}
\item $ \bt_n(\bar x)=\{\varphi_i(\bar x)\geq 2^{-n}: i\in \omega\}$ for every $n$, and 
\item  all $\varphi_i$, for $i\in \omega$,  have the same modulus of uniform continuity. 
\end{enumerate}
If $\bt_n$ and $\varphi_i$ are as above, 
then
\[
\psi(\bar x)=\inf_{i\in \omega} \varphi_i(\bar x)
\]
is an  $L_{\omega_1\omega}$ formula (see \cite{ben2009model}) whose 
modulus of uniform continuity is equal to the joint modulus of uniform continuity of $\varphi_i$, for $i\in \omega$.
Therefore its interpretation is uniformly continuous in every $L$-structure $A$ and 
 $\sup_{\bar x}\psi(\bar x)^A=0$ if and only if $A$ omits all $\bt_n$ for $n\in \omega$. 

For the simplicity of notation in the following we consider a single-sorted language. 

\begin{lemma} \label{L.uniform.1} 
Suppose  $\bfT$ is a theory  in a 
language $L$  and 
 $\bt_n$, for $n\in \omega$, is a uniform sequence 
of $m$-types over $\bfT$.  If~$M\models \bfT$ then 
\[
Z=\{\bar a\in M^m: \text{for all $n$, $\bt_n$ is not realized by $\bar a$ in $M$}\}
\]
is a closed subset of $M^m$. 
In particular, if  $D$ is a dense  subset of $M$, then 
 all~$\bt_n$ are omitted in $M$ if and only if 
 none of them is realized by any $m$-tuple of elements of $D$. 
 \end{lemma}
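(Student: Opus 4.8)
The plan is to exploit the uniformity of the sequence $\bt_n$: because all the formulas $\phi_i(\bar x)$ share one modulus of uniform continuity, the map $\bar a \mapsto \psi(\bar a)^M$, where $\psi(\bar x) = \inf_{i\in\bbN}\phi_i(\bar x)$, is uniformly continuous on $M^m$ (an infimum of an equicontinuous family has the same modulus). First I would observe that for a fixed $\bar a$, the tuple $\bar a$ realizes $\bt_n$ in $M$ if and only if $\phi_i(\bar a)^M \geq 2^{-n}$ for all $i$, i.e.\ if and only if $\psi(\bar a)^M \geq 2^{-n}$. Hence $\bar a$ fails to realize $\bt_n$ for some particular $n$ exactly when $\psi(\bar a)^M < 2^{-n}$, and $\bar a$ fails to realize \emph{every} $\bt_n$ exactly when $\psi(\bar a)^M < 2^{-n}$ holds for all $n$, that is, $\psi(\bar a)^M = 0$. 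Therefore $Z = \{\bar a \in M^m : \psi(\bar a)^M = 0\}$.

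Next I would conclude that $Z$ is closed: $\psi$ is uniformly continuous on $M^m$ (with respect to the $\max$-metric on $M^m$), so $\{\bar a : \psi(\bar a)^M = 0\} = (\psi^M)^{-1}(\{0\})$ is the preimage of a closed set under a continuous map, hence closed. This is the first assertion.

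For the ``in particular'' clause, suppose $D$ is dense in $M$ and no $m$-tuple from $D$ realizes any $\bt_n$; equivalently $\psi(\bar a)^M = 0$ for all $\bar a \in D^m$, i.e.\ $D^m \subseteq Z$. Since $D$ is dense in $M$, $D^m$ is dense in $M^m$ (in the $\max$-metric), and $Z$ is closed, so $Z = M^m$, meaning $\psi(\bar a)^M = 0$ for \emph{all} $\bar a \in M^m$. But $\psi(\bar a)^M = 0$ forces, for each fixed $n$, that $\psi(\bar a)^M < 2^{-n}$, hence $\bar a$ does not realize $\bt_n$; so every $\bt_n$ is omitted in $M$. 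The converse direction is trivial, since if all $\bt_n$ are omitted in $M$ then in particular no tuple from $D$ realizes any of them.

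The only point requiring a little care—and the mild ``obstacle''—is the interchange of the infimum with the continuity modulus, i.e.\ verifying that $\psi^M$ really is uniformly continuous with the common modulus $\Delta$ of the $\phi_i$; this is the standard fact that for an equicontinuous family $\{g_i\}$ one has $|\inf_i g_i(\bar a) - \inf_i g_i(\bar b)| \le \sup_i |g_i(\bar a) - g_i(\bar b)| \le \e$ whenever $d(\bar a,\bar b) < \Delta(\e)$, which I would state in one line. Everything else is bookkeeping with the equivalence $M \models \bt_n(\bar a) \iff \psi(\bar a)^M \ge 2^{-n}$ and the density of $D^m$ in $M^m$.
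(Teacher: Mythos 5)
Your proof is correct and takes essentially the same approach as the paper: both identify $Z$ as the zero set of the uniformly continuous function $\psi^M = \inf_i \phi_i^M$, and deduce the ``in particular'' clause from density of $D^m$ together with closedness of $Z$. You simply spell out the details (the equivalence $M\models\bt_n(\bar a)\iff\psi(\bar a)^M\ge 2^{-n}$ and the equicontinuity argument) that the paper leaves implicit.
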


\begin{proof} 
Set $Z$ is closed as the zero set of the interpretation of the continuous 
infinitary formula $ \inf_{i\in \omega}  \varphi_i(\bar a)$.   
The last sentence of the lemma follows immediately. 
\end{proof}

The  syntactic characterization of omissible  uniform sequences of types given below is analogous to the syntactic characterization 
 of  complete omissible types given in \cite{BYBHU}.  As Ita\"\i{}  Ben Yaacov and Todor Tsankov pointed out, 
 the set $X$ of complete types extending a type in a uniform sequence of types is metrically open (\S\ref{S.Borel.types}) and therefore 
by a standard argument (see \cite[\S 12]{BYBHU} or \cite[Lecture 4]{Ha:Continuous}) types in $X$ are simultaneously omissible if and only if 
$X$ is meager in the logic topology (\S\ref{S.Borel.types}). 
We spell out details of the proof below since we will need a similar 
argument in the case when the theory $\bfT$ 
is not necessarily complete in Theorem~\ref{T.Uniform.1}. 
As in    \S\ref{S.Principal},  
type $\bt$ (complete or not) is said to be \emph{principal}  
if for every  $\e>0$  the set
\[
B_\e(\bt)=\{\bs\in S_n(\bfT): \dist(\bs,K_{\bt})<\e\}
\]
is somewhere dense in the logic topology with respect to $\bfT$.

\begin{thm} \label{T.Uniform} 
Suppose   $\bfT$ is a complete theory in  a countable language $L$.  
If for every $m\in\omega$  we have a uniform sequence of types 
 \[
 \bt^m_n=\{\varphi^m_j(\bar x(m))\geq 2^{-n}: j\in \omega\},\text{ for }n\in \omega,
 \]
    then the following 
are equivalent. 
\begin{enumerate}
\item None of the types  $\bt^m_n$, for $m,n\in \omega$,  is principal. 
\item Theory 
$\bfT$ has a model that omits all $\bt^m_n$, for all $n\in \omega$ and $m\in \omega$.   
\item There are no $\delta>0$, $m\in \omega$,  and condition $\psi(\bar x(m))<\e$ such that $\bfT\models \inf_{\bar x(m)} \psi(\bar x(m))<\e$ and 
$\bfT+\psi(\bar x(m))<\e\models \varphi^m_j(\bar x(m))\geq \delta$ for every $j\in \omega$. 
\end{enumerate}
\end{thm}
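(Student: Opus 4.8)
The plan is to prove the cycle of implications $(2)\Rightarrow(3)\Rightarrow(1)\Rightarrow(2)$ (or, equivalently, contrapositives where convenient), treating $(1)\Leftrightarrow(3)$ as essentially the translation between the semantic notion of a principal type and its syntactic witness, and $(1)\Rightarrow(2)$ as the omitting-types construction via the Henkin/forcing apparatus of \S\ref{S.forcing}.

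First I would dispose of $(2)\Rightarrow(1)$, or rather its contrapositive $\lnot(1)\Rightarrow\lnot(2)$: if some $\bt^m_n$ is principal, then, exactly as in \S\ref{S.Principal}, for that fixed $m,n$ one can choose a decreasing chain of $\e$-balls in the logic topology whose intersection forces a realization of $\bt^m_n$ in \emph{every} model of $\bfT$ — here I must be a little careful, since $\bt^m_n$ is not complete, so I would instead argue directly: principality of $\bt^m_n$ means that for every $\e>0$ the metric $\e$-neighbourhood of the (metrically open, by Ben Yaacov--Tsankov) set $X^m_n$ of complete types extending $\bt^m_n$ is somewhere dense, hence by the Baire category argument of \cite[\S 12]{BYBHU} some complete type extending $\bt^m_n$ is isolated, so it — and hence $\bt^m_n$ — is realized in every model of $\bfT$. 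Next, $(1)\Rightarrow(2)$: I would show $(1)$ implies each $\bt^m_n$ is non-principal in the sense of \S\ref{S.Forcing.Sigma} (with $\Sigma$ the set of all formulas), so Proposition~\ref{P3.4} gives, for each pair $(m,n)$, a countable family $\bfF_{\bt^m_n}$ of dense subsets of $\bbPT$ whose generic filter omits $\bt^m_n$; taking the union over all $(m,n)$ — still countable — and invoking Proposition~\ref{P3} to get that $M_G\models\bfT$, I obtain a separable model omitting all $\bt^m_n$ simultaneously. The one subtlety is that Proposition~\ref{P3.4} as stated omits a single type; I would note its proof adapts verbatim to a countable list of $\Sigma$-non-principal types, since each contributes countably many dense sets.

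For the equivalence of $(3)$ with the others, I would prove $\lnot(3)\Rightarrow\lnot(1)$ and $\lnot(1)\Rightarrow\lnot(3)$. If $(3)$ fails, fix the witnessing $\delta>0$, $m$, and condition $\psi(\bar x)<\e$ with $\bfT\vdash\inf_{\bar x}\psi(\bar x)<\e$ and $\bfT+\psi(\bar x)<\e\vdash\phi^m_j(\bar x)\geq\delta$ for all $j$; pick $n$ with $2^{-n}\leq\delta$. Then any model of $\bfT$ has a tuple $\bar a$ with $\psi(\bar a)<\e$, and every such tuple realizes $\bt^m_n=\{\phi^m_j(\bar x)\geq 2^{-n}\}$; moreover the open set of complete types containing the condition $\psi<\e$ is nonempty (by consistency) and every type in it extends $\bt^m_n$, so $X^m_n$ has nonempty interior in the logic topology, hence $\bt^m_n$ is principal, contradicting $(1)$. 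Conversely, if some $\bt^m_n$ is principal then by definition its metric $\e$-neighbourhood $B_\e(\bt^m_n)$ is somewhere dense for every $\e>0$; taking $\e$ small and using that $X^m_n$ is metrically open, the Baire argument of \cite[\S 12]{BYBHU} produces a nonempty logic-open $U\subseteq X^m_n$, and second countability of the logic topology together with the completeness theorem \cite{yaacov2010proof} lets me replace $U$ by a basic open neighbourhood of the form $\{\text{types} : \psi(\bar x)<\e\}$ contained in $X^m_n$; since every complete type in $X^m_n$ satisfies $\phi^m_j\geq 2^{-n}$ for all $j$, compactness gives $\bfT+\psi(\bar x)<\e\vdash\phi^m_j(\bar x)\geq 2^{-n}$ for every $j$, so $(3)$ fails with $\delta=2^{-n}$.

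The main obstacle I anticipate is the careful handling of the passage between the metric $\e$-neighbourhood appearing in the definition of ``principal'' (\S\ref{S.Principal}) and a genuine logic-open neighbourhood of complete types all extending $\bt^m_n$ — in other words, extracting the clean syntactic consequence $\bfT+\psi<\e\vdash\phi^m_j\geq\delta$ in condition $(3)$. This is exactly the place where the uniformity hypothesis (all $\phi^m_j$ sharing a modulus of uniform continuity, and $\bt^m_n$ being the ``open'' type $\{\phi^m_j\geq 2^{-n}\}$) does real work: it guarantees that $X^m_n$ is metrically open, so that a somewhere-dense metric neighbourhood collapses to a logic-open one, and it is what makes the Baire-category omitting-types machinery of \cite[\S 12]{BYBHU} applicable despite the types being incomplete. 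I would make sure to isolate this as a small lemma (or cite the Ben Yaacov--Tsankov remark preceding the theorem) rather than re-derive it inline.
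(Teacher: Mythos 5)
The overall architecture — forcing for $(1)\Rightarrow(2)$, uniform continuity modulus to go between the metric notion of principality and the syntactic condition in $(3)$ — is the same as the paper's, though you arrange the implications slightly differently: the paper closes the cycle $(1)\Rightarrow(2)\Rightarrow(3)\Rightarrow(1)$ (via contrapositives of the last two), whereas you prove $(1)\Leftrightarrow(2)$ and $(1)\Leftrightarrow(3)$ separately, which is a bit redundant.

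There is, however, a genuine gap in your argument for $\lnot(1)\Rightarrow\lnot(2)$. You assert that principality of $\bt^m_n$, together with the Baire category argument, produces a \emph{complete} type extending $\bt^m_n$ that is isolated, and then conclude this complete type (hence $\bt^m_n$) is realized in every model. This intermediate claim is false in general: the Baire argument gives a nonempty logic-open set $U$ contained in the set of complete types extending some $\bt^m_{n'}$ (not $\bt^m_n$ — see below), but an open subset of $S_n(\bfT)$ need not contain any isolated point, and the set of principal complete types need not be dense (that would require $\bfT$ to have an atomic model). Fortunately you do not actually need an isolated complete type: the nonempty logic-open $U$, say of the form $\{\psi<\e\}$ with $\bfT\vdash\inf_{\bar x}\psi<\e$, already forces every model of $\bfT$ to contain a tuple whose complete type lies in $U$, and any such tuple realizes $\bt^m_{n'}$. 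This is exactly what your own $\lnot(1)\Rightarrow\lnot(3)$ argument extracts, and it is what the paper uses (passing through $\lnot(3)\Rightarrow\lnot(2)$). So the fix is to drop the ``isolated complete type'' detour entirely and simply observe that the logic-open $U$ gives $\lnot(2)$ directly — or, more economically, to prove $\lnot(3)\Rightarrow\lnot(2)$ and let $(1)\Leftrightarrow(3)$ finish the job.

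A minor imprecision worth tightening: the set $X^m_n$ of complete types extending the single type $\bt^m_n$ is not itself metrically open; it is the union $\bigcup_n X^m_n$ that is, and what the uniform continuity modulus actually gives is that a metric $\delta$-ball around $X^m_n$ sits inside $X^m_{n+1}$ (the paper correspondingly concludes $\phi^m_j\geq 2^{-n-1}$, not $\geq 2^{-n}$). So the logic-open $U$ you extract lands in $X^m_{n+1}$, not $X^m_n$; this costs nothing since condition $(3)$ is quantified over all $\delta>0$, but the indices should be kept honest.
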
 

\begin{proof}  (1) and (2) are equivalent by 
Corollary~\ref{C3.4}. 

In order to prove that (3) implies (1), 
 assume (1) fails and let $m$ and $n$ be such that $\bt^m_n$ is principal. 
Since types $\bt^l_n$ for $l\neq m$ do  not appear in this proof, we shall write $\bar x$ 
in place of~$\bar x(m)$ throughout for simplicity.  
 Since all formulas 
 $\varphi^m_j$, for $j<\omega$,  have the same modulus of uniform continuity we can find 
 $\delta>0$  such that for all $\bar x$ and $\bar y$ of the appropriate sort 
 $\max_i d( x_i, y_i)<\delta$ implies  $|\varphi^m_j(\bar x)-\varphi^m_j(\bar y)|\leq 2^{-n-1}$ 
for all $j$. Since $\bt^m_n$ is principal, there is a condition  $\psi(\bar d)<\e$ such that 
in every  $M\models \bfT$ we have
\[
\{\bar a: \psi(\bar a^M)<\e\}\subseteq \{\bar a: \dist(\bar a, \bt^m_n(M))<\delta\}. 
\]
Therefore  $\bfT+\psi(\bar x)<\e\models \varphi^m_j(\bar x) \geq 2^{-n}-2^{-n-1}= 2^{-n-1}$ for all $j$
and~(3) fails. 

In order to prove that (2) implies (3),  assume (3) fails. Fix 
a condition $\psi(\bar x)<\e$ and 
$\delta>0$ such that     $\bfT\models \inf_{\bar x}\psi(\bar x)<\e$ and 
$\bfT+\psi(\bar x)<\e\models \varphi^m_j(\bar x)\geq \delta$ for all $j$.  
 If $2^{-n}<\delta$ then clearly every model of $\bfT$ realizes~$\bt^m_n$, hence (2) fails. 
\end{proof}

Theorem~\ref{T.Uniform} has an analogue in the case when theory $\bfT$ is not necessarily complete. 

\begin{thm} \label{T.Uniform.1} 
Suppose  $\bfT$ is a not necessarily complete theory  in a 
countable language $L$. 
If for every $m\in\omega$  we have a uniform sequence of types 
 \[
\bt^m_n=\{\varphi^m_j(\bar x(m))\geq 2^{-n}: j\in \omega\},\text{ for }n\in \omega,
 \]
    then the following 
are equivalent. 
\begin{enumerate}
\item $\bfT$ has a model omitting all $\bt^m_n$, for all $m$ and $n$ in $ \omega$.  
\item 
There are no $\delta>0$, finite $F\subseteq \omega$,  and   conditions $\psi_m(\bar x(m))<\e$ for $m\in F$ 
such that $\bfT\models \min_{m\in F}\inf_{\bar x(m)}  \psi_m(\bar x(m))<\e$ and 
\[
\bfT+\psi_m(\bar x(m))<\e\models \varphi^m_j(\bar x(m))\geq \delta
\]
 for every   $j\in \omega$
and $m\in F$. 
\end{enumerate}
\end{thm}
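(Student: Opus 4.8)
The plan is to mimic the proof of Theorem~\ref{T.Uniform}, replacing the single condition $\psi$ by a finite disjunction of conditions $\psi_m$, $m\in F$, which accounts for the incompleteness of $\bfT$. The direction $\neg(1)\Rightarrow\neg(2)$ should be essentially verbatim the argument from Theorem~\ref{T.Uniform}: if some model of $\bfT$ realizes all $\bt^m_n$ is to be \emph{ruled out}, i.e.\ if $(1)$ fails, we want to exhibit the combinatorial data in $(2)$. Actually the contrapositive is cleaner the other way: if the data in $(2)$ exists, then for the particular $m\in F$ and any $n$ with $2^{-n}<\delta$, the theory $\bfT\cup\{\min_{m\in F}\psi_m(\bar x)<\e\}$ forces $\phi^m_j(\bar x)\geq\delta>2^{-n}$ for all $j$; since $\bfT\vdash\inf_{\bar x}\min_{m\in F}\psi_m(\bar x)<\e$, every model of $\bfT$ has a tuple $\bar a$ with $\min_{m\in F}\psi_m(\bar a)^M<\e$, hence $\psi_{m_0}(\bar a)^M<\e$ for some $m_0\in F$, and that tuple realizes $\bt^{m_0}_n$ in $M$. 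So no model of $\bfT$ omits all $\bt^m_n$, i.e.\ $(1)$ fails. This gives $\neg(2)\Rightarrow\neg(1)$.

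For $(1)\Rightarrow(2)$, equivalently $\neg(2)\Rightarrow\neg(1)$ is already done, so we want $\neg(1)\Rightarrow\neg(2)$: assuming no model of $\bfT$ omits all $\bt^m_n$, produce the forcing data. The natural route is a compactness/Baire-category argument at the level of the space of complete theories extending $\bfT$. Here is the idea: for each completion $\bfT'\supseteq\bfT$ in a separable language, Theorem~\ref{T.Uniform} applies, so if $\bfT'$ has no model omitting all $\bt^m_n$ then condition~(3) of Theorem~\ref{T.Uniform} fails for $\bfT'$, yielding $\delta'>0$, $m'\in\omega$, and a condition $\psi_{\bfT'}(\bar x)<\e$ with $\bfT'\vdash\inf_{\bar x}\psi_{\bfT'}(\bar x)<\e$ and $\bfT'+\psi_{\bfT'}(\bar x)<\e\vdash\phi^{m'}_j(\bar x)\geq\delta'$ for all $j$. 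The set of completions $\bfT'$ for which a \emph{fixed} triple $(\delta,m,\psi)$ witnesses this is (relatively) clopen-ish in the logic topology on the compact space of completions of $\bfT$: "$\bfT'\vdash\inf_{\bar x}\psi(\bar x)<\e$" is an open condition on $\bfT'$, and "$\bfT'+\psi<\e\vdash\phi^m_j(\bar x)\geq\delta$ for all $j$" is closed (an intersection over $j$ of conditions, each of which, by compactness/\cite{yaacov2010proof}, is witnessed by finitely much of $\bfT'$). Actually, to get uniformity we should also shrink $\delta$ slightly so that the witnessing is robust. By compactness of the space of completions of $\bfT$, finitely many such triples $(\delta_m,\psi_m)$, $m\in F$, cover all completions; taking $\delta=\min_{m\in F}\delta_m$ and noting that every model of $\bfT$ extends to a model of some completion $\bfT'$ covered by the triple indexed by some $m\in F$, one verifies $\bfT\vdash\inf_{\bar x}\min_{m\in F}\psi_m(\bar x)<\e$ and $\bfT+\psi_m(\bar x)<\e\vdash\phi^m_j(\bar x)\geq\delta$ for each $m\in F$ and all $j$. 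This is exactly~(2) failing.

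The main obstacle I anticipate is making the compactness argument over the space of completions genuinely rigorous: one must check that "$(\delta,m,\psi)$ witnesses $\neg(3)$ of Theorem~\ref{T.Uniform} for $\bfT'$" defines a set of completions $\bfT'$ that is open (or at least has open interior covering the relevant completions) so that a finite subcover exists, and one must be careful that the $\e$ can be taken uniform (this is harmless by Lemma~\ref{L.CFC}, rescaling to $\e=1$, or by working with $\psi$ replaced by $\max(0,\psi-\e/2)$ type truncations). A secondary subtlety is the quantifier "for every $j\in\omega$": the relation $\bfT'+\psi(\bar x)<\e\vdash\phi^m_j(\bar x)\geq\delta$ must be re-expressed, via the completeness theorem of \cite{yaacov2010proof}, as a statement about models, and one checks that if it holds for $\bfT'$ with margin then it holds for all completions in a neighborhood; since for a \emph{complete} $\bfT'$ the direction from Theorem~\ref{T.Uniform}~(3) gives this on the nose, the only work is the neighborhood-robustness, which follows from compactness of $R_{\phi^m_j}$ and a slight decrease of $\delta$. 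Once these points are handled the rest is bookkeeping.
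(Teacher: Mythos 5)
Your $\neg(2)\Rightarrow\neg(1)$ direction is correct and matches what the paper does in spirit: from the witness data for $\neg(2)$, in any model of $\bfT$ some tuple $\bar a$ has $\psi_{m_0}(\bar a)<\e$ for some $m_0\in F$, and then $\phi^{m_0}_j(\bar a)\geq\delta$ for all $j$, so $\bar a$ realizes $\bt^{m_0}_n$ once $2^{-n}\leq\delta$. (Your intermediate phrasing that $\bfT\cup\{\min_{m\in F}\psi_m<\e\}$ ``forces $\phi^m_j\geq\delta$'' is not literally what $\neg(2)$ gives — the implication is per $m$ — but you then pick $m_0$ correctly, so this is only a slip in the prose.)

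The gap is in $(2)\Rightarrow(1)$, exactly at the point you flag. You want to cover the compact space $K$ of completions of $\bfT$ by the sets $W(\delta,m,\psi,\e)=\{\bfT'\in K:\bfT'\vdash\inf_{\bar x}\psi<\e\text{ and }\bfT'+\psi<\e\vdash\phi^m_j\geq\delta\text{ for all }j\}$ and extract a finite subcover. But each $W$ is open$\cap$closed, not open: the first clause is open, while for each $j$ the second clause is equivalent to the \emph{closed} condition $\bfT'\vdash\sup_{\bar x}\min(\e\dminus\psi(\bar x),\delta\dminus\phi^m_j(\bar x))=0$, and you then intersect over all $j\in\omega$. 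Compactness yields finite subcovers only of \emph{open} covers, and a cover of a compact space by open$\cap$closed pieces can have no finite subcover (e.g.\ $[0,1]=\bigcup_{x}\bigl(B(x,1)\cap\{x\}\bigr)$). Your proposed repair — shrink $\delta$, shrink $\e$ — does not make $W$ a neighborhood of $\bfT'$: the closed clause is a constraint on \emph{all} tuples $\bar a$ with $\psi(\bar a)$ near $\e$, and a nearby completion $\bfT''$ can have a model with such a tuple on which $\phi^m_j$ is small, no matter how $\delta$ is decreased. ``Compactness of $R_{\phi^m_j}$'' is compactness in the value space, not in $K$, and does not restore openness.

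The paper takes a different route: it shows directly that if $\bfT$ satisfies $(2)$ then $\bfT$ has a \emph{complete} extension $\bfT_\infty$ still satisfying $(2)$, by a Henkin-style chain $\bfT=\bfT_0\subseteq\bfT_1\subseteq\cdots$ that at stage $n$ pins the value of the $k$-th sentence $\theta_k$ ($k\leq n$) into a closed interval of length $\leq 2^{-n}$. The compactness used is that of $R_{\theta_k}\subseteq\bbR$, one sentence at a time: cover $R_{\theta_k}$ by finitely many small closed intervals $V$, and if each $\bfT_n\cup\{\theta_k\in V\}$ failed $(2)$, then merging the finitely many resulting witnesses — this merge is exactly where the finite $F$ in clause $(2)$ is needed — would give a witness for $\neg(2)$ over $\bfT_n$, contradicting the inductive hypothesis. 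Then Theorem~\ref{T.Uniform} applies to $\bfT_\infty$. So the compactness that makes the theorem work is local (in $R_{\theta_k}$) and plugged into a recursive construction of a single completion, rather than a global covering argument over the space of all completions.
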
 

\begin{proof} Recall that by Lemma~\ref{L.CFC} 
condition (2) is equivalent to its special case   in which $\e=1$. 

By Theorem~\ref{T.Uniform} it suffices to show that 
$\bfT$ satisfies (2) if and only if it can be extended to a complete theory that still satisfies~(2). 
Only the direct implication requires a proof, analogous to the corresponding 
 proof in the first order case. Let $\Theta_k$, for $k\in \omega$, 
 enumerate a countable dense set of $L$-sentences such that each sentence occurs infinitely often. 
 By Lemma~\ref{L.CFC} for a closed interval $V\subseteq [0,1]$ and a sentence $\Theta$
the  condition $\Theta\in V$ is equivalent to one of the form $\Theta'=0$ for some $\Theta'$. 
 
 Assume $\bfT$ satisfies (2). 
 We shall find an increasing chain of theories 
 $\bfT=\bfT_0\subseteq \bfT_1\subseteq \bfT_2\subseteq\dots$ 
 and closed intervals $U_{n}\subseteq [0,1]$ 
 of diameter at most $2^{-n}$  
 such that for all $n$ we have   
 \[
 \bfT_n\models \Theta_n\in U_{n}
  \]
 and $\bfT_n$ still satisfies (2). 
 
 Assume that for some $n$ both  $\bfT_n$ and $U_{k}$ for $ k\leq n$ as required have been 
 chosen. 
 Fix  a finite cover $\cV$ of $[0,1]$ by closed intervals of diameter at most~$2^{-n+1}$. 
 We claim that there exists $V\in \cV$ such 
 that  $\bfT_n\cup \{\Theta_n\in V\}$ (identified 
 with a theory by using the closed condition case of Lemma~\ref{L.CFC}) still satisfies~(2). 
 
 Assume otherwise. 
  For every $V\in \cV$ we can find  a finite $F(V)\subseteq\omega$ such that for every $m\in F(V)$  
there exist     an open  condition $\psi_{V,m}(\bar x(m))<1$ 
 and $\delta(V,m)>0$  such that 
the following two conditions hold:  
 \begin{align*} 
 \bfT_n+\Theta_{n+1}\in V&\models \min_{m\in F(V)} \inf_{\bar x(m)} \psi_{V,m}(\bar x(m))<1\\
 \bfT_n+\Theta_{n+1}+\psi_{V,m} (\bar x(m))<1&\models 
 \inf_{j\in \omega} \varphi^m_j(\bar x(m))\geq \delta(V,m). 
 \end{align*} 
 Let
 \[
 \Theta'=\min_{V\in \cV} \dist(\Theta_{n+1}, V) \quad \text{and}\quad
 \delta=\min_{V\in \cV, m\in F(V)} \delta(V,m). 
 \] 
Then 
 $\bfT_n\models \Theta'=0$ and 
 $\bfT_n\models \min_{V\in \cV, m\in F(V)}\inf_{\bar x(m)}  \psi_{V,m}(\bar x(m))<1$ but for each $V\in \cV$ we have
 \[
 \bfT_n   +\psi_{V,m}(\bar x(m))<1\models \inf_{j\in \omega} \varphi^m_j(\bar x(m))\geq \delta, 
 \]
 contradicting (2). 
 This contradiction implies that  we can  find $V\in \cV$ such that adding condition $\Theta_n\in V$ to $\bfT_n$ preserves condition~(2). 

Once all $\bfT_n$ for $n\in \omega$  are constructed, theory 
 $\bfT_\infty=\bigcup_n \bfT_n$ decides the value of each $\Theta_n$
 and is therefore complete. Since the finitary condition (2) is satisfied by every finite fragment 
 of $\bfT_\infty$,  it is satisfied by $\bfT_\infty$ and Theorem~\ref{T.Uniform} can now be applied to $\bfT_\infty$. 
\end{proof}

\subsection{Uniform sequences of types and forcing} \label{S.udt} 
A class $\fM$ of models is \emph{\udt{}} 
if there is a set of sequences of 
uniform types $\langle\bt^m_n:n\in \omega\rangle$, 
for $m\in I$ such that $A$ is in $\fM$ if and only if it omits 
all of these types. This is a special case of being ``definable by uniform families of formulas''
as in \cite[Definition~5.7.1]{Muenster}.

If $\Sigma$ is the set of all formulas of the language of $\bfT$ then we denote $\bbPTSM$ by $\bbPTM$. 

\begin{prop} \label{P4} Assume $\fM$ is a nonempty class of models of a not necessarily complete theory $\bfT$. 
If $\bt_n$, for $n\in \omega$, is  a uniform sequence of types that 
are omitted in every model in $\fM$ then $\bbPTM$ forces that 
$M_G$ omits all~$\bt_n$. 
\end{prop}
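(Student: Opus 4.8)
The plan is to run a Henkin/density argument over $\bbPTM$ exactly parallel to the proof of Proposition~\ref{P3.4}, the only new ingredient being that, because \emph{every} member of $\fM$ omits the types, a witness for the relevant dense set can be extracted from any certifying model. Write $\bt_n(\bar x)=\{\phi_i(\bar x)\geq 2^{-n}:i\in\bbN\}$, where the $\phi_i$ are $m$-ary with a common modulus of uniform continuity. By Proposition~\ref{P3.1}, applied with $\Sigma$ the set of all $L$-formulas so that $\bbPTSM=\bbPTM$, the forcing $\bbPTM$ adds a model $M_G\models\bfT$ in which the countable submodel $M^0_G=\{\md_j:j\in\bbN\}$ is dense and in which every condition of the generic filter is satisfied. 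By Lemma~\ref{L.uniform.1} applied with $D=M^0_G$, it therefore suffices to arrange that no $m$-tuple of elements of $M^0_G$ realizes any $\bt_n$; equivalently, that for every $m$-tuple $F$ of natural numbers and every $n\in\bbN$ there is $i$ with $\phi_i(\bar d_F)^{M_G}<2^{-n}$.

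For fixed $F$ and $n$ I would show that
\[
\bfD_{F,n}=\{q\in\bbPTM:(\exists i)\,\bfT+q\vdash \phi_i(\bar d_F)<2^{-n}\}
\]
is dense. Given $p=(\psi^p,F^p,\e^p)$, choose $M\in\fM$ and a tuple $\bar a$ in $M$ with $\psi^p(\bar a_{F^p})^M<\e^p$, and pick arbitrary $a_j\in M$ for $j\in F\setminus F^p$. Since $M$ omits $\bt_n$, the tuple $\bar a_F$ does not realize $\bt_n$, so $\phi_i(\bar a_F)^M<2^{-n}$ for some $i$. Put $c_0=\psi^p(\bar a_{F^p})^M$ and $c=\phi_i(\bar a_F)^M$, and fix $\eta>0$ with $c_0+\eta<\e^p$ and $c+\eta<2^{-n}$. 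Then
\[
q=\bigl(\max(\,|\psi^p(\bar d^p)-c_0|,\,|\phi_i(\bar d_F)-c|\,),\ F^p\cup F,\ \eta\bigr)
\]
is a condition of $\bbPTM$ (it is certified in $M$ by $\bar a$, where both displayed terms evaluate to $0$), it extends $p$ (from $|\psi^p(\bar d^p)-c_0|<\eta$ one deduces $\psi^p(\bar d^p)<c_0+\eta<\e^p$), and it forces $\phi_i(\bar d_F)^{M_G}<c+\eta<2^{-n}$; hence $q\in\bfD_{F,n}$. The only massaging involved is the standard continuous functional calculus trick of pinning the generic constants near the values they actually take in $M$.

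If $G$ is $\bbPTM$-generic it meets every $\bfD_{F,n}$, so in $M_G$ no $m$-tuple of $M^0_G$ realizes any $\bt_n$, and then Lemma~\ref{L.uniform.1} yields that $M_G$ omits all $\bt_n$. When $L$ is separable the family $\{\bfD_{F,n}:F\in\omega^{<\omega},n\in\bbN\}$ is countable, so one may produce such an $M_G$ by a direct recursive construction without passing to a forcing extension. I do not expect a genuine obstacle here: the one point that requires attention is that a limit point of $M^0_G$ in the completion $M_G$ could a priori realize some $\bt_n$ even though $M^0_G$ contains no such tuple, and this is precisely ruled out by the closedness assertion of Lemma~\ref{L.uniform.1}, which in turn uses the common modulus of uniform continuity of the $\phi_i$ (making $\inf_i\phi_i$ a uniformly continuous infinitary formula). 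The hypothesis that \emph{every} model in $\fM$ omits the $\bt_n$ is used exactly once, in the density of $\bfD_{F,n}$, and this is what separates the present statement from forcing with a single certifying structure.
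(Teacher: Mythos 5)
Your proof is correct and takes essentially the same route as the paper: both proceed by a density/genericity argument over $\bbPTM$, using that each certifying model in $\fM$ omits the types to extend any condition to one forcing $\bar d_F$ to fail some $\phi_i(\bar x)\geq 2^{-n}$, and then invoking Lemma~\ref{L.uniform.1} to pass from $M^0_G$ to the completion $M_G$. Your write-up just makes explicit the dense sets $\bfD_{F,n}$ and the construction of the extending condition (via the usual trick of pinning $\psi^p$ and $\phi_i$ near their values at a witnessing tuple), which the paper leaves implicit.
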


\begin{proof} Since 
 $\{\md_j: j \in \omega\}$ is dense 
in~$M_G$, Lemma~\ref{L.uniform.1} implies that $\bt^m_n$ is realized in $M_G$ if and only if it is realized by some $ \md_F$
Suppose that some condition $p$ forces that a tuple $F\subseteq F^p$ realizes 
$\bt^m_n$ for some $m$ and  $n$ in~$ \omega$.  But there is $M\in \fM$ and a tuple $\bar a$ in $M$ of the appropriate sort such that 
$M\models\psi^p(\bar a)<\e^p$. Since $M$ omits $\bt^m_n$, we can extend $p$ to a condition 
that decides that $F$ does not satisfy some condition in $\bt^m_n$, contradicting our assumption on $p$. 
\end{proof} 

Some of the most 
important properties of \cstar-algebras are \udt. 
This includes 
  being AF, UHF (\cite{Mitacs2012}),  $C(X)$ for a chainable continuum (\cite{eagle2015pseudoarc}), 
 TAF,   Popa algebra or quasidiagonal, 
 having nuclear dimension $\leq n$,  decomposition rank $\leq n$, for $n\in \omega$,  
  (\cite[Theorem~5.7.3]{Muenster}). (See however \cite{goldbring2015omitting} and \cite{goldbring2015model}
  for some negative examples.)
  These types are particularly simple and 
 we include a straightforward 
 technical sharpening of Proposition~\ref{P4} with an eye to potential applications.

A uniform sequence of types 
$\bt_n=\{\varphi_j(\bar x)\geq 2^{-n}: j\in \omega\}$, for $n\in \omega$, 
 is \emph{universal} if every $\varphi_j(\bar x)$ is of the form $\inf_{\bar y}\psi(\bar y, \bar x)$ for some quantifier-free formula $\psi(\bar y,\bar x)$. 
 (By Lemma~\ref{L.CFC} a condition of the form $\inf_{\bar y}\psi(\bar y, \bar x)\geq 2^{-n}$ is equivalent to a condition of the form 
 $\sup_{\bar y} \psi'(\bar y, \bar x)=0$, and formulas on the left-hand side of this expression  are commonly recognized as universal given the somewhat controversial convention 
 that $M\models\Theta=0$
 is interpreted as `$\Theta$ is true in $M$'.)  
 The following theorem probably follows from the results from \cite{ben2009model} but we include it for  reader's convenience. 
 
\begin{prop} \label{P4+} Assume that  $\fM$ is a nonempty class of models of a (not necessarily complete) theory~$\bfT$
and  $\Sigma$ is a set of formulas in the language of $\bfT$ that satisfies $(\Sigma 1)$--$(\Sigma 3)$. 
If~$\bt_n$, for $n\in \omega$, is  a uniform sequence of universal types that 
are omitted in every model in $\fM$ then $\bbPTSM$ forces that~$M_G$ omits all $\bt_n$. 

If $\bfT$ is $\forall\exists$-axiomatizable then $\bbPTSM$ forces that 
the generic model satisfies~$\bfT$. 
\end{prop}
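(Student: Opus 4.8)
The plan is to mimic the proof of Proposition~\ref{P4}, adapting it to the Robinson (quantifier-free) forcing $\bbPTSM$ and exploiting the universal form of the types. Recall that in $\bbPTSM$ (with $\Sigma$ the quantifier-free formulas) a condition $p=(\psi^p,F^p,\e^p)$ has $\psi^p$ quantifier-free and is certified in some $M\in\fM$, meaning $M\models\psi^p(\bar a)<\e^p$ for an appropriate tuple $\bar a$. By Proposition~\ref{P3.1} (using that either $\Sigma$ is all formulas or $\bfT$ is $\forall\exists$-axiomatizable and $\Sigma$ contains the quantifier-free formulas) the generic $M_G$ is a model of $\bfT$; this is precisely the last sentence of the statement, so the only real work is in showing $M_G$ omits all $\bt_n$.

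First I would set up the genericity requirements exactly as in Lemma~\ref{L.uniform.1} and the proof of Proposition~\ref{P4}: since each $\bt_n(\bar x)=\{\phi_j(\bar x)\geq 2^{-n}:j\in\omega\}$ is a uniform sequence, $\inf_i\phi_i(\bar x)$ is a continuous infinitary formula whose zero set is closed, so it suffices to guarantee that no $m$-tuple from the dense set $D=\{d_j(G):j\in\omega\}$ of interpretations of the Henkin constants realizes any $\bt_n$. Fix a tuple $F$ of indices of size $m$ and fix $n$. For each $j\in\omega$ and each rational $\e>0$ consider the set of conditions $q$ that either decide $\phi_j(\bar d_F)<2^{-n}$ outright, or are incompatible with having $\bar d_F$ realize $\bt_n$; the key density claim is that the set
\[
\bfE_{F,n}=\{q\in\bbPTSM:\ \bfT+q\vdash \phi_j(\bar d_F)<2^{-n}\text{ for some }j\in\omega,\text{ or }q\text{ forbids }\bt_n(\bar d_F)\}
\]
is dense. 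Here is where the universality of the types enters: $\phi_j(\bar x)=\inf_{\bar y}\psi_j(\bar y,\bar x)$ for quantifier-free $\psi_j$, so the statement ``$\phi_j(\bar d_F)<2^{-n}$'' is an $\exists$-statement, witnessed by introducing a fresh constant $d_k$ and strengthening to the quantifier-free condition $\psi_j(\bar d_k,\bar d_F)<2^{-n}$ — which stays inside $\Sigma$, so it is a legitimate condition in the Robinson forcing $\bbPTSM$. Given any $p$, pick $M\in\fM$ certifying $\psi^p$ via some tuple $\bar a$ with $\bar a_F$ the designated tuple; since $M$ omits $\bt_n$, the tuple $\bar a_F$ fails some condition in $\bt_n$, i.e.\ $\phi_j(\bar a_F)^M<2^{-n}$ for some $j$, and since $\phi_j=\inf_{\bar y}\psi_j$ this is witnessed by some tuple $\bar b$ in $M$ with $\psi_j(\bar b,\bar a_F)^M<2^{-n}$. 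Then the condition obtained from $p$ by conjoining $\psi_j(\bar d_k,\bar d_F)<2^{-n}$ (with $\bar d_k$ fresh constants mapped to $\bar b$) is certified in $M$ and extends $p$, witnessing density of $\bfE_{F,n}$.

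Now including $\bfE_{F,n}$ for all $m$-tuples $F$ of Henkin indices and all $n\in\omega$ in the generic family $\bfF$ (countably many, since $L$ is separable) forces that every $m$-tuple from $D$ fails some condition in every $\bt_n$; by Lemma~\ref{L.uniform.1} this means $M_G$ omits all $\bt_n$. The $\forall\exists$-clause is then immediate from Proposition~\ref{P3.1}. I expect the main technical point to be the one just described: verifying that the witnessing constant for the $\inf$ in the universal formula can be absorbed into a quantifier-free condition certified in the same $M\in\fM$ — this is exactly what fails for a general (non-universal) $\phi_j$, since then ``$\phi_j(\bar d_F)<2^{-n}$'' need not be expressible by a quantifier-free condition, and it is why the Robinson-forcing version of Proposition~\ref{P4} requires the universality hypothesis. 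Everything else is a routine adaptation of the density arguments in the proof of Proposition~\ref{P3} and Proposition~\ref{P4}.
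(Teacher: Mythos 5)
Your proof is correct and follows essentially the same approach as the paper's: both exploit the universal form $\phi_j=\inf_{\bar y}\psi_j(\bar y,\bar x)$ to replace the witnessing $\inf$ by fresh Henkin constants, yielding a quantifier-free condition that is certified in the same $M\in\fM$ and decides that $\bar d_F$ fails a condition of $\bt_n$, after which Lemma~\ref{L.uniform.1} and Proposition~\ref{P3.1} finish. The only cosmetic difference is that you organize this as a direct density claim for a countable family $\bfE_{F,n}$, whereas the paper phrases it as a contradiction from a condition forcing realization; the technical content is identical.
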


\begin{proof} 
Lemma~\ref{L.uniform.1} implies that $M_D$ realizes  $\bt_m$ if and only if  some~$\md_F$ satisfies it.
Suppose  that some condition $p$ forces that a tuple $F\subseteq F^p$ 
realizes~$\bt_n$ for some $n\in \omega$.  There are $M\in \fM$ and a tuple $\bar a$ in $M$ of the appropriate sort such that 
$M\models\psi^p(\bar a)<\e^p$. Since $M$ omits $\bt_n$, 
there exists  a condition $\inf_{\bar y}\psi(\bar y, \bar x)\geq 2^{-n}$ in $\bt_n$ such that 
$\inf_{\bar y} \psi(\bar y, \bar a)^M<2^{-n}-\e$ for some $\e>0$ and~$\psi$ is 
quantifier-free. 
If $F'$ is  of cardinality $|\bar y|$, by the open case of Lemma~\ref{L.CFC} 
we have that  
$\psi(\bar d_{F'}, \bar d_F)<2^{-n}-\e$ is equivalent to a condition in $\bbPTSM$. 
Then $M$ certifies that this condition is  compatible with $p$, and it 
 decides that $d_F$ does not realize $\bar t_n$. 

Since $\bfT$ is $\forall\exists$-axiomatizable, $M_G\models \bfT$ 
by \eqref{I.P3.3} of Theorem~\ref{P3}. 
\end{proof}

If   $\fM$ is a class  of $L$-models then $M\in \fM$ is 
 \emph{existentially closed} (often abbreviated as \emph{e.c.}) for $\fM$
  if whenever $N\in \fM$ is such that $M$ is isomorphic to a submodel of $N$, 
 $\bar a\in M$, and $\varphi(\bar x, \bar y)$ is a quantifier-free $L$-formula
 then 
 \[
 \inf_{\bar y} \varphi(\bar a,  \bar y)^M=\inf_{\bar y}\varphi(\bar a, \bar y)^N.
 \]
 Existentially closed \cstar-algebras and II$_1$ factors were studied in \cite{goldbring2014kirchberg}, \cite[\S 6]{Muenster}, 
 and \cite{FaGoHaSh:Existentially}.

\begin{coro}\label{C.ec} 
Assume $\bfT$ is an $\forall\exists$-axiomatizable
theory in a countable language and $\bt_n$, for $n\in \omega$, is a 
uniform sequence of universal types over~$\bfT$. 
If the class $\fM$ of all models of $\bfT$ that omit all $\bt_n$, for $n\in \omega$, 
is nonempty then it contains a model that is existentially closed for $\fM$. 
\end{coro}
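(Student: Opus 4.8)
The plan is to produce the desired model as the generic model $M_G$ of the forcing $\bbPTSM$ in which $\Sigma$ is the set of all quantifier-free formulas of the language of $\bfT$ and $\fM$ is the (nonempty, by hypothesis) class of all models of $\bfT$ omitting all $\bt_n$. Two things have to be arranged: that $M_G\in\fM$, and that $M_G$ is existentially closed for $\fM$; both will be secured by making $G$ generic for a suitable countable family of dense sets. By the remark following Proposition~\ref{P3}, since $L$ is separable this can be carried out inside the ground model, so no genuine forcing extension is needed.

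First I would apply Proposition~\ref{P4+}: since $\bfT$ is $\forall\exists$-axiomatizable and each $\bt_n$ is a uniform sequence of \emph{universal} types, there is a countable family $\bfF_0$ of dense subsets of $\bbPTSM$ such that any $\bfF_0$-generic filter yields $M_G$ with the constants $\{d_j\}$ as a dense subset, satisfying $\bfT$, and omitting every $\bt_n$, i.e.\ $M_G\in\fM$. Next I would enlarge $\bfF_0$ by the dense sets that encode existential closedness: for each quantifier-free $\phi(\bar x,\bar y)$ from a fixed $d_\infty$-dense countable set of formulas, each finite $F\subseteq\omega$, each rational $r$, and each $k\in\omega$, let $\bfE_{\phi,F,r,k}$ be the set of conditions $p$ that either \emph{name a witness}, in the sense that $F^p$ contains a finite $F'$ disjoint from $F$ for which $p$ entails $\phi(\bar d_F,\bar d_{F'})<r+1/k$ (i.e.\ every $M\in\fM$ satisfying $p$ satisfies this condition), or \emph{certify a lower bound}, in the sense that every $M\in\fM$ satisfying $p$ has $\inf_{\bar y}\phi(\bar d_F,\bar y)^M\ge r$. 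Density is a routine case split mirroring the density of $\bfD_{\phi(\bar d_F),\e}$ in \S\ref{S.Forcing.Sigma}: given $q$, if some $M\in\fM$ both satisfies $q$ and has $\inf_{\bar y}\phi(\bar d_F,\bar y)^M<r$, reinterpret a fresh tuple of constants $\bar d_{F'}$ as a witness in $M$ and, by a continuous functional calculus adjustment, extend $q$ to a condition still certified by this model and lying in $\bfE_{\phi,F,r,k}$ through the first alternative; otherwise $q$ itself satisfies the second alternative. Put $\bfF=\bfF_0\cup\{\bfE_{\phi,F,r,k}: \phi,F,r,k\}$, still countable, and fix an $\bfF$-generic $G$ with $M:=M_G$.

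Finally I would verify that $M$ is existentially closed for $\fM$. Let $N\in\fM$ contain $M$ as a substructure, let $\bar a\in M$, and let $\phi(\bar x,\bar y)$ be quantifier-free; by uniform continuity of $\inf_{\bar y}\phi(\cdot,\bar y)$, density of the $d_j$ in $M$, and $d_\infty$-density of the chosen set of formulas, it suffices to treat $\bar a=\bar d_F$ with $\phi$ in that set. The inequality $\inf_{\bar y}\phi(\bar a,\bar y)^N\le\inf_{\bar y}\phi(\bar a,\bar y)^M$ is immediate from $M\subseteq N$. For the reverse, suppose it failed; pick a rational $r$ and a $k$ with $\inf_{\bar y}\phi(\bar a,\bar y)^M>r+1/k$ and $\inf_{\bar y}\phi(\bar a,\bar y)^N<r$. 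Genericity forces $G\cap\bfE_{\phi,F,r,k}\ne\emptyset$; a condition there obtained through the first alternative supplies constants $\bar d_{F'}$ with $\phi(\bar d_F,\bar d_{F'})^M<r+1/k$ (since $M$ satisfies every condition in $G$), contradicting $\inf_{\bar y}\phi(\bar a,\bar y)^M>r+1/k$. The step I expect to take the most care is ruling out the second alternative: every condition in $G$ is a \emph{quantifier-free} condition true in $M$, hence --- quantifier-free formulas being absolute from a substructure to its extension --- true in $N$, and $N\in\fM$; so if some $p\in G$ certified a lower bound, then $\inf_{\bar y}\phi(\bar d_F,\bar y)^N\ge r$, contradicting $\inf_{\bar y}\phi(\bar a,\bar y)^N<r$. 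Both alternatives being impossible, the equality holds, so $M$ is existentially closed for $\fM$.
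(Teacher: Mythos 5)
Your proof is correct and follows the paper's first step exactly (invoking Proposition~\ref{P4+} to place $M_G$ in $\fM$), but it diverges on the second step: the paper dispatches existential closedness by citing \cite[Lemma~A.4]{goldbring2014kirchberg} and \cite[Lemma~A.7]{goldbring2014kirchberg}, whereas you reconstruct that argument from scratch by enlarging the countable family of dense sets with the sets $\bfE_{\phi,F,r,k}$. This is a legitimate and more self-contained route; in effect you are re-deriving the Goldbring--Sinclair lemmas inside the $\bbPTSM$ framework. The place where your argument earns its keep is exactly the one you flag: ruling out the ``certify a lower bound'' alternative requires that every $p\in G$ is a \emph{quantifier-free} condition true in $M_G$, hence true in any $N\supseteq M_G$, hence applicable to $N\in\fM$ --- which is precisely why the forcing had to be done with $\Sigma$ the quantifier-free formulas and not the full forcing $\bbPT$. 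One small point worth making explicit: in alternative (b) you need $F\subseteq F^q$ for $\inf_{\bar y}\phi(\bar d_F,\bar y)^M$ to be meaningful given only an interpretation of $\bar d_{F^q}$; this is harmless since any $q$ can first be extended to one with $F\subseteq F^{q'}$, but the density argument should say so. With that caveat your density and verification steps go through, and the final reduction to $\bar a = \bar d_F$ via uniform continuity and a $d_\infty$-dense set of formulas is standard.
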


\begin{proof} Let $\Sigma$ be the set of all quantifier-free formulas in the language of $\bfT$. 
Therefore $\bbPTS$-generic models are ``finitely generic'' in the terminology of 
A. Robinson (as used in  \cite{goldbring2014kirchberg}).   
By Proposition~\ref{P4+},  the $\bbPTS$-generic model~$M_G$ is  
forced to omit all $\bt_n$ and to satisfy $\bfT$,  
and is therefore forced to belong to~$\fM$.    

In order to prove that $M_G$ is forced to be 
existentially closed suppose it 
is isomorphic to a submodel of a model $N$ of $\bfT$. 
By continuity it suffices to assure that for 
  a countable dense set of 
 quantifier-free $L^+$-formulas
 $\varphi(\bar d_F, \bar y)$  
we have
 \begin{enumerate}
 \item \label{Eq.ec}
$ \inf_{\bar y} \varphi(\bar d_F,  \bar y)^{M_G}=\inf_{\bar y}\varphi(\bar d_F, \bar y)^N$.
 \pushcounter
 \end{enumerate}
Fix  $\varphi(\bar d_F, \bar y)$ and $r\in \bbQ$.  
Let $k$ be the length of $\bar y$. 
Consider the set~$\bfD_{\varphi(\bar d_F,\bar y),r}$ of conditions
$q\in \bbPT$ such that 
$F^q\supseteq F$ and
one of the following holds. 
\begin{enumerate}
\popcounter
\item $\bfT+q\models \inf_{\bar y}\varphi(\bar d_F, \bar y)\geq r$, or 
\item $\bfT+q\models \inf_{\bar y}\varphi(\bar d_F, \bar y)< r$.  
 \end{enumerate}
We claim that this set is dense in $\bbPT$. Suppose $p\in \bbP$ is such that 
$\bfT+p\not\models  
\inf_{\bar y}\varphi(\bar d_F, \bar y)\geq r$. 
Fix 
$N$ that satisfies $p$ and 
$\inf_{\bar y}\varphi(\bar d_F, \bar y)^N<r$. 
Since $\varphi$ is quantifier-free, we can extend $p$ to decide 
$\bar b$ in $N$ 
such that $\varphi(\bar d_F, \bar b)^N<r$. 
This gives a condition $q\leq p$ in $\bfD_{\varphi(\bar d_F,\bar y),r}$, 
and proves the density. 

Let $\varphi_n(\bar x, \bar y)$, for $n\in \omega$, be an enumeration of a dense 
set of $L^+$ formulas with two distinguished tuples of free variables. 
Choose  filter  $G$ of $\bbPT$ that  meets dense open subsets 
of the form~$\bfD_{\varphi_n(\bar d_F,\bar y),r}$ for all finite $F\subseteq \omega$ of the appropriate size 
and all  $r\in \bbQ_+$. 
Then $M_G$ satisfies 
 \eqref{Eq.ec} 
for each $\varphi_n$ and each~$F$ of the appropriate size. 
By the continuity, $M_G$ is existentially closed. 
 \end{proof}

We conclude this section with  a complexity result related  to Proposition~\ref{P.Borel}  and 
Theorem~\ref{T0} (see  \S\ref{S.C*} for  applications).  
A standard Borel space of $L$-structures $\hM(L)$ was defined in  \S\ref{S.Borel.Models}. 

\begin{prop} \label{P.UDT-Borel}
Assume $\bfT$ is a theory in a countable language $L$. Then every class 
$\fM$  of separable models of $\bfT$
 \udt{} forms a Borel subset of the space of models of $\bfT$. 
\end{prop}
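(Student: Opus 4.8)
The plan is to combine Lemma~\ref{L.uniform.1} with the Borel structure of the model space $\hM(L)$ introduced in \S\ref{S.Borel.Models}. Recall that a point $\gamma \in \hM(L)$ codes a countable metric $L$-structure with universe $\omega$, and the associated separable model $M(\gamma)$ is its completion; moreover $\{a(M,j) : j \in \omega\}$ is a dense subset of $M(\gamma)$. By Lemma~\ref{L.uniform.1}, if $\langle \bt^m_n : n \in \bbN\rangle$ for $m \in I$ is a countable family of uniform sequences of types witnessing membership in the given \udt{} class $\fM$, then $M(\gamma) \in \fM$ if and only if for every $m \in I$, every $n \in \bbN$, and every tuple $\bar a$ from the dense set $\{a(M(\gamma),j) : j \in \omega\}$ of the appropriate arity, $\bar a$ does not realize $\bt^m_n$ in $M(\gamma)$.

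First I would unwind what ``$\bar a$ does not realize $\bt^m_n$'' means: writing $\bt^m_n(\bar x) = \{\phi^m_j(\bar x) \geq 2^{-n} : j \in \omega\}$, the tuple $\bar a$ fails to realize $\bt^m_n$ precisely when there exists $j \in \omega$ with $\phi^m_j(\bar a)^{M(\gamma)} < 2^{-n}$. So omitting all the $\bt^m_n$ is equivalent to the $\bPi^0_3$-style statement
\[
(\forall m \in I)(\forall \bar a \in \omega^{<\omega})(\forall n)(\exists j)\ \phi^m_j(a(M(\gamma),\bar a))^{M(\gamma)} < 2^{-n},
\]
where all four quantifiers range over countable sets. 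The only thing left is to check that for a fixed formula $\phi$, fixed indices from the dense set, and a fixed rational threshold, the set of $\gamma \in \hM(L)$ with $\phi(a(M(\gamma),\bar a))^{M(\gamma)} < r$ is Borel (in fact this follows from the computations underlying Lemma~\ref{L.Borel.Theory} and the lemma immediately following it, which show that evaluation of a formula at constants is a Borel function of $\gamma$ — one approximates $\phi$ by terms/propositional combinations as in the proof of Lemma~\ref{L.Cohen}, and each such approximant is evaluated continuously from $\gamma$). Since the language is separable, the index set $I$ of uniform sequences may be taken countable, and a countable union/intersection of Borel sets is Borel, so the class $\fM \cap \hM(L)$ is Borel as claimed. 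Finally, I would intersect with the Borel set $\{\gamma : M(\gamma) \models \bfT\}$ provided by the lemma following Lemma~\ref{L.Borel.Theory}, so that the conclusion is stated relative to the space of models of $\bfT$.

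The main obstacle — really the only non-bookkeeping point — is verifying that $\gamma \mapsto \phi(a(M(\gamma),\bar a))^{M(\gamma)}$ is Borel for quantifier-free $\phi$ (and hence, by the uniform-continuity bookkeeping built into the coding, that passing to the completion does not disturb the values at points of the dense set). This is precisely the content of the computation cited just before Lemma~\ref{L.Borel.Theory}, so I would simply invoke it rather than redo it. Everything else — that failing to realize a type is an existential-over-$\omega$ condition on the values of the $\phi^m_j$, and that countably many Borel conditions intersect to a Borel set — is immediate from Lemma~\ref{L.uniform.1} and the separability of $L$.
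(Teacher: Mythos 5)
Your argument is correct and takes the same route as the paper: the paper's own proof of this proposition is nothing more than the sentence immediately preceding it, which observes that by Lemma~\ref{L.uniform.1} omission of a uniform sequence is witnessed on the countable dense subset encoded in $\gamma\in\hM(L)$, leaving exactly the Borel bookkeeping you have spelled out (Borel-ness of $\gamma\mapsto\phi(a(\gamma,\bar a))^{M(\gamma)}$, countable intersections/unions over $m$, $n$, $\bar a$, $j$, and intersection with the Borel set of models of $\bfT$). One small imprecision: you attribute the countability of the index set $I$ to separability of the language, but that does not follow—separability makes the space of uniform sequences separable, not any particular family of them countable; the countability of $I$ is better read as part of the intended meaning of ``\udt{}'' (a tacit assumption the paper also makes, and one that holds in every application in \S\ref{S.C*}), not as a consequence of separability of $L$.
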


\begin{proof} This is a special case of the continuous variant 
of the standard fact that the set of models of an $L_{\omega_1\omega}$ 
sentence is Borel. 
 Fix a set of sequences of 
uniform types $\langle\bt^m_n:n\in \omega\rangle$, 
for $m\in \omega$, as in \S\ref{S.udt}. 
By Lemma~\ref{L.uniform.1}, model 
$M$ 
omits a uniform sequence of types $\{\bt^m_n: n\in\omega\}$
 if and only if each of the elements of its countable dense
set omits this sequence. Thus for $\gamma\in \hM(L)$ we have 
 $M(\gamma)\in \fM$ if and only if (let 
 $l(m,n)$ denote the arity of $\bt^m_n$,  
 let $[\omega]^{l(m,n)}$  denote the set of subsets of~$\omega$ of 
 cardinality $l(m,n)$, and let $\gamma_F=\langle \gamma_i: i\in F\rangle$)
 \[
 (\forall m)(\forall n)(\exists k\geq 1) (\forall F\in  [\omega]^{l(m,n)})(\exists j\in \omega) 
\varphi^m_n(\gamma_F)\geq 1/k. 
\]
Since all quantifiers range over countable sets, this is a Borel condition. 
\end{proof} 

%
%
%
%
%

\section{Simultaneous omission of types}
\label{S.T1}

We prove Theorem~\ref{T1} by constructing 
 an example of a  complete theory~$\bfT_4$ in a countable language $L_*$ 
and  types $\bs_n$, for $n\in \omega$,
such that for every~$k$ there exists a model of $\bfT_4$ that omits
all $\bs_n$ for $n\leq k$ but 
no model of $\bfT_4$ simultaneously omits all~$\bs_n$.

 We shall prove that in addition all types $\bs_n$, for $n>0$, are simultaneously omissible in a single model $M_4$ of~$\bfT_4$. As a matter of fact, we shall first define $M_4$ and then 
 take $\bfT_4$ to be its theory.   
Type  $\bs_0$ has a distinguished role in this construction 
and it  is not to be confused with the type $\bs_0$ as defined in \S\ref{S.q}.

Let $\rank(T)$ denote the rank of a well-founded tree $T$ and let $\rho_T$ denote the 
rank function on $T$. 
  We write $\rank(T)=\infty$ if $T$ is ill-founded. 
The disjoint union of  a sequence $T_i$ for $i\in \omega$ of trees is denoted  
by~$\bigoplus_i T_i$, 
thus $\rank(\bigoplus_i T_i)=\sup_i \rank(T_i)$. 
We identify all the roots of the different  $T_i$'s with a single node which is the 
root of $\bigoplus_i T_i$.

Every tree of height $\leq\omega$ is isomorphic to a tree  of finite sequences from a
large enough set, and all trees used in \S \ref{S.T1} will be of this form. 
This convention has a convenient consequence: Every tree $T$ has 
the empty sequence $\langle\rangle$ as its root and  
 $\rank(T)=\rho_T(\langle\rangle)$. 
Also, the tree $\bigoplus_i T_i$ has a unique root $\langle\rangle$ for every sequence of trees  $T_i$, for $i\in \omega$.

\subsection{The roadmap for the proof of Theorem~\ref{T1}}
In  \S\ref{S.NodesEtc} we verify the definability of some distinguished subsets of a tree, 
such as levels or nodes of a fixed finite rank. 
In \S\ref{S.T1T2} we define the tree  $T_2$ and the concatenation operation 
$S^\frown T$. The language $L_*$ and the  $L_*$-structure $M_4$ are constructed in  
  \S\ref{S.M4}. The theory $\bfT$ as in the statement of Theorem~\ref{T1} 
  is $\bfT_4=\Th(M_4)$. In \S\ref{S.Nl} we define a family of reducts $L_l$, for $l\in \omega$, 
  of $L_*$ and a truncation operation that produces an $L_l$-structure $N\rs l$ 
  from an $L_*$-structure  $N$. These truncated structures 
  will be   used in conjunction with the test for elementary 
  equivalence, 
  Lemma~\ref{L.isomorphic}. 
Types $\bs_m$, for $m\in \omega$, are defined in \S\ref{S.types-sm} and 
 the proof of Theorem~\ref{T1} is completed in  \S\ref{S.M[l]}, after introducing yet another auxiliary family of $L_*$-structures, $M_4(j)$, for $j\in \omega$; 
 $M_4(j)$ is a model of $\bfT_4$ that omits $\bs_n$ for all $n\leq j$.

\subsection{Trees $T_2$,  $S^\frown T$, and $T_{2,m}$}\label{S.T1T2}
The tree $T_2$ is  defined as
(a function $s\colon n\to \omega\times \omega$ is identified with the pair of functions
$s_0\colon n\to \omega$ and $s_1\colon n\to\omega$):  
\[
T_2=\{s\colon n\to \omega\times \omega: n\in \omega, s_0(i)>s_0(i+1)\text{ for }0\leq i<n-1\}. 
\]
This is a well-founded tree of rank $\omega$ 
and $\rho_{T_2}(s)=s_0(|s|-1)$ for all $s$. 
The   straightforward proof of the following lemma  is omitted.     

\begin{lemma} \label{L.T2.characterization} 
The tree $T_{2}$ is a well-founded tree of 
 height $\omega$ and rank $\omega$.    
 Each of its nodes $s$ has infinitely many immediate successors of 
 every rank smaller than the rank of $s$. 
Every countable tree with these three properties is isomorphic to $T_{2}$. \qed
\end{lemma} 

  \begin{lemma}
For every node $t$ in $T_2$ and every immediate successor $s$ of $t$ there are infinitely 
many immediate successors $s'$ of $t$ such that there is an automorphism of~$T_2$ swapping $s$ and $s'$. 
\end{lemma}

\begin{proof}  Suppose $s$ is an immediate successor of $t$. 
 Let $s'$ be an immediate successor of $t$  such that $s'_0=s_0$ and  $s'_1\neq s_1$. 
 Let $\Phi$ be an automorphism of~$T_2$ that swaps the cones $\{r\in T_2: s\sqsubseteq r\}$
 and $\{r\in T_2: s'\sqsubseteq r\}$ and leaves all other nodes in $T_2$ fixed. 
 Since there are infinitely many choices for $s'_1(|s|)$, for every $s$ 
 there are infinitely many such pairs $s'$, $\Phi$. 
\end{proof}

The concatenation of finite sequences $s$ and $t$ is denoted 
$s^\frown t$. 
Given trees $S$ and $T$ whose nodes are  finite sequences, 
by   $S^\frown T$ we denote the tree obtained by tagging a
  copy of $T$ to every node of $S$. We define a new tree
\[
\textstyle S^\frown T=\{s^\frown t: s\in S, t\in T\}
\]
with the extension ordering.  
The  subtree $\{s^\frown \langle\rangle: s\in S\}$ of $S^\frown T$
 is  identified with          $S$.
 
 We record two straightforward   
facts about this construction. 
\begin{enumerate}
\item\label{I.T2.0}  $\rho_{S^\frown T}(s)=\rank(T)+\rho_S(s)$ for every $s\in S$. 
\item\label{I.T2.00}  $\rank(S^\frown T)=\rank(T)+\rank(S) $. 
\pushcounter
\end{enumerate}
 
If $s$ is a node in a tree $T$, let  
 \[
 T[s]=\{t\in T: s\sqsubseteq t\},
 \]
  with the ordering inherited from $T$. 
 It is a subset of $T$ that is a tree, but it is a subtree of $T$ only if $s$ is its root. 
 
Let 
 \[
 T_{2,m}=\{s\in T_2: \range(s_0)\subseteq m\}
 \]
The  straightforward proof the following lemma is omitted.     

\begin{lemma} \label{L.T2,m}
The tree $T_{2,m}$ has 
 height $m$  and  each of its nodes $s$ has infinitely many immediate successors of 
 every rank smaller than the rank of $s$. 
Every countable tree with these two properties is isomorphic to $T_{2,m}$, 
and in particular $T_2\rs m\cong T_{2,m}$. 
\qed
\end{lemma} 

Given a tree $T$ and $l<\omega$, we write
\[
T\rs l=\{t\in T: |t|<l\}. 
\]

 \begin{lemma} \label{L.S-T2}
Suppose $T$ is a countable tree and  $s\in T$. 
Then
\begin{enumerate}
\item \label{I.S-T2.1} $(T^\frown T_2)\rs m\cong T_{2,m}$.  
\item \label{I.S-T2.2} 
 $((T^\frown T_2)\rs m)[s]\cong  T_{2,m-k}$, with $k=|s|<m$. 
\end{enumerate}
\end{lemma} 

\begin{proof}  
\eqref{I.S-T2.1} 
Since a copy 
of $T_2$ is attached to every node of the original tree, 
$(T^\frown T_2)\rs m$ satisfies the conditions of Lemma~\ref{L.T2,m} 
  and is therefore isomorphic to $T_{2,m}$. 

\eqref{I.S-T2.2} 
The isomorphism constructed in  \eqref{I.S-T2.1}
sends $((T^\frown T_2)\rs m)[s]$ to $T_{2,m}[t] $ for some $t\in T_{2,m}$ of the same rank 
as $s$. This rank is equal to $m-k$. Hence $t$ has height $k$ and rank $m-k$, and the definition of $T_{2,m}$ implies 
$T_{2,m}[t]\cong T_{2,m-k}$. 
\end{proof}

 The richness of $T_{2,m}$ is exploited in the following lemma,  one of 
 the key tools in the proof of Theorem~\ref{T1}. 
 
 \begin{lemma} \label{L.iso-extension} 
Suppose $S$ and $T$ are trees of height $\omega$, $m<l$, and 
 $S'$ and $T'$ are subtrees of $S$ and $T$, respectively, of height $m$. 
Then any isomorphism    $\Phi\colon S'\to T'$ 
can be extended to an isomorphism $\Psi\colon (S^\frown T_2)\rs l\to  (T^\frown T_2)\rs l$. 
\end{lemma} 
 
 \begin{proof} Fix enumerations of 
 $((S^\frown T_2)\rs l)\setminus S'$   
and $((T^\frown T_2)\rs l)\setminus T'$ in type $\omega$ which extend the corresponding tree orderings. 
An isomorphism $\Psi$ extending $\Phi$ is now constructed by a back-and-forth method along these enumerations. 
Given a node $s\in (S^\frown T_2)\rs l$ such that $\Psi(t)$ is defined for all of its predecessors, let $\Psi(s)$ be any successor of $\Psi(s^-)$ of the same rank as $s$. 
Such a node exists by Lemma~\ref{L.S-T2} and Lemma~\ref{L.T2,m}. 
Similarly, if $t\in (T^\frown T_2)\rs l$ is such that all of its predecessors are in the range of $\Psi$ defined so 
far, let $s_0$ be such that $\Psi(s_0)$ is the immediate predecessor of $t$ and let $\Psi(s)=t$ for an immediate successor $s$ of $s_0$ of the same rank as $t$. 
 This is possible by Lemma~\ref{L.T2.characterization}. After $\omega$ steps one has an isomorphism $\Psi$. 
\end{proof} 

The following proposition will not be used explicitly in any of our other proofs.  
Its proof 
is a  toy version of a part of the proof of Theorem~\ref{T1}. 

\begin{prop} Trees  $S^\frown T_2$ and $T_2$ are (considered as $L_{\cN}^-$-structures)  elementarily equivalent for any tree $S$. 
\end{prop}  
 
 \begin{proof} Since $T_2$ is countable, an easy L\"owenheim--Skolem 
  argument shows that 
it suffices to prove the assertion for a countable tree $S$.  
Lemma~\ref{L.iso-extension} implies that $(S^\frown T_2)\rs m$ 
is isomorphic to $T_2\rs m$ for all $m$. Since $T_2\rs m\cong T_{2,m}$ 
is $1/(m+1)$-dense in $T$ for every $T$,   Lemma~\ref{L.isomorphic} implies  $T_2\equiv S^\frown T_2$. 
\end{proof} 
 
\subsection{The language $L_*$, models $M(k)$ and  $M_4$, and the theory $\bfT_4$} \label{S.M4} 
The most important definition in this subsection is that of $M(k)$, an  expansion of $T_2$ to language $L_*$.  
Language  $L_*$ is an expansion of
  $L_{\cN}^-$  (\S\ref{L.CN-})
with the additional  unary predicate symbols~$P_{i,j}$ for $i>0$ and $j\geq 0$
 in $\omega$ such that each $P_{i,j}$ is $i+1$-Lipshitz. 

With  $T_2$ as defined in \S\ref{S.T1T2} fix  an injection 
\[
k\colon T_2\to \omega
\]
such that $s\sqsubset t$ implies $k(s)<k(t)$ for all $s$ and $t$ in $T_2$. 
An~$L_*$-structure $M(k)$
whose universe is~$T_2$ is defined as follows. 
The function symbols~$f_k$ for $k\in \omega$ are interpreted as in  \S\ref{S.Baire}, by  
$f_k(a)=b$ if $b$ is the unique element of the $k$-th level 
below $a$ if there is such $b$ or $f_k(a)=a$ otherwise. 

In order to interpret the predicates $P_{i,j}$, we first 
 define a colouring $c\colon T_2\to \omega$.  
Let $c(\langle\rangle)=0$. 
Suppose that $|t|\geq 1$, 
 and denote  the immediate predecessor of $t\in T_2$ 
by $t^-$, and   let 
  \[
  c(t)=\max\{k(t^-),\max\{l: (\exists j<|t|) 2^l| t_1(j)\}
  \}. 
\]
We make a note  that $c(t)\geq k(t^-)$ and that $s\sqsubseteq t$ implies $c(s)\leq c(t)$. 

Having defined the colouring $c$, for all $i,j$ let the interpretation of $P_{i,j}$ in $M(k)$ be defined as follows: 
\[
P_{i,j}(t)=
\begin{cases} 
0, \text{ if  } |t|=i \text{ and } c(t)=j,\\ 
1, \text{ otherwise.}
\end{cases} 
\]
Since $d(s,t)<1/(i+1)$ and $s\neq t$ together imply  $\min(|s|,|t|)>i$ and $P_{i,j}(s)=P_{i,j}(t)=1$, 
the predicate  $P_{i,j}$ as defined above  is $i+1$-Lipshitz for all $i\geq 1$ and all $j$. 

\begin{lemma} \label{L.P} For every $t\in T_2$ the following hold
(the condition $P_{i,j}(t)=0$ is interpreted as `the node $t$ is $(i,j)$-coloured'):  
\begin{enumerate}
\item [(P1)] The node $t$ is not $(i,j)$-coloured for any  $i\neq |t|$ and $j\geq 0$. 
\item [(P2)] The node $t$ is $(|t|,j)$-coloured for exactly one $j$.  
\item  [(P3)] If  $t$ is an $(i,j)$-coloured, non-terminal, node of $T_2$
then for every $m<\rho_{T_2}(t)$ and every $j'\geq \max(j,k(t))$
 there are infinitely many immediate $(|t|+1,j')$-coloured successors $s$ of $t$ such that  $\rho_{T_2}(s)=m$. 
\item [(P4)] Every immediate successor of an 
  $(i,j)$-coloured node  
is   $(i+1,j')$-coloured for a unique $j'\geq \max(j,k(t))$.   
\end{enumerate}
\end{lemma} 

\begin{proof} Conditions (P1), (P2), and (P4) are clearly satisfied.  Condition (P3) is satisfied because  
  for all $m\in \omega$ and $n\in \omega$, for  every  $s\in T_2$ with $\rho_{T_2}(s)>m$ 
there exists an immediate  successor $t$ of $s$ with $\rho_{T_2}(t)=m$ and $t_1(i-1)=n$. 
In particular for every $l\in \omega$ there are infinitely many immediate 
successors~$t$ of $s$ with $\rho_{T_2}(t)=m$ such that $2^l$ divides $t_1(i-1)$ but $2^{l+1}$ does 
not. 
\end{proof} 

Fix a  family $k^n$, for $n\in \omega$, of functions 
such that 
\begin{enumerate}
\pushcounter
\item \label{I.k0} For all $n$ the function $k^n\colon T_2\to \omega$ is an injection. 
\item \label{I.k1} For all $n$ and $s\sqsubset t$ in $T_2$ we have $k^n(s)<k^n(t)$ ($k^n$ is increasing). 
\item \label{I.k2}  Every increasing injection $k$ from a finite  subtree of $T_2$ into $\omega$ 
is extended by $k^n$ for infinitely many $n$. 
\popcounter
\end{enumerate}
With $M(k^n)$ denoting $M(k)$ for $k=k^n$, let  
\[
M_4=\bigoplus_n M(k^n)^\frown \Too. 
\]
The \emph{bottom part} of $M_4$ is $\bigoplus_n M(k^n)$.   
All other nodes comprise the   \emph{top part} of $M_4$. 
The interpretations of $f_m$ and $P_{i,j}$ in the  bottom part of $M_4$ agree with those in $M(k^n)$. 
Suppose $t$ is in the top part 
of $M_4$. If $i>0,j\geq 0$, and  $t$ belongs to the top part of $M_4$ then 
we let $P_{i,j}(t)=1$; 
i.e. $t$ is not  $(i,j)$-coloured for any pair $(i,j)$. 
Clearly the interpretation of 
each $P_{i,j}$ in $M_4$ is $i+1$-Lipshitz. 
The functions $f_m$ are interpreted in the usual way, by letting $f_m(t)=t$ if 
$|t|<m$ and $f_m(t)=s$ if  $s$ is the unique node on the $m$th level of $M_4$ extended by $s$. 

Since its underlying tree $(\bigoplus_n T_2)^\frown T_2$ is well-founded, $M_4$ has no nontrivial Cauchy sequences  
 and is therefore complete. 
Let 
\[
\bfT_4=\Th(M_4). 
\]
While the definition of $M_4$ depends on the choice of functions $k^n$, it can be proved that 
its theory 
$\bfT_4$ is independent from this choice, as long as these functions satisfy conditions \eqref{I.k0}--\eqref{I.k2} 
 (see Remark~\ref{R.T4}). This fact will not be used in the proof of Theorem~\ref{T1}.  
 
We record some consequences of the definitions  and  properties of rank stated in 
 \eqref{I.T2.0} and~\eqref{I.T2.00}. Since $M_4$ is well-founded, all of its elements are nodes. 
\begin{enumerate}
\popcounter
\item A node $t$ of $M_4$  is coloured if and only if it belongs to the bottom part of $M_4$.
If $t\neq \langle\rangle$ then  
its colour  $(i,j)$ satisfies  $i=|t|$ and $j\geq k^n(t^-)$ for  $n$ such that $t\in M(k^n)$.
If $t^-$ is $(i-1,j')$-coloured, then $j'\leq j$.  
\item\label{I.M4.full}  A node $t$ of $M_4$ satisfies $\rho_{M_4}(t)<\omega$ 
if  $t$ belongs to the top part of~$M_4$  and $\omega\leq\rho_{M_4}(t)<2\omega$ 
 if $t$  belongs to the bottom part of~$M_4$. 
\pushcounter
\end{enumerate}
 Suppose $N\models \bfT_4$. As in   \S\ref{S.WF}, 
\[
T_N=\{a\in N: (\exists j\in \omega) f_j(a)=a\}
\]
is a tree dense in $N$. Every $b\in N\setminus T_N$ can be identified with the branch $\{f_j(b): j\in \omega\}$, 
and this branch is a Cauchy sequence converging to $b$. 
If $s\in N$ then we write $\rho_N(s)$ for  $\rho_{T_N}(s)$ if $s\in T_N$ and $\rho_N(s)=\infty$ 
otherwise. (Note that $\rho_N(s)=\infty$ is a possibility even if $s\in T_N$.)

If $N\models \bfT_4$ then we 
use the terminology and notation as in \S\ref{L.CN-} 
and refer to the elements of $T_N$ and $N\setminus T_N$  as  
 \emph{nodes} and \emph{branches}, respectively,  of~$N$. 
All  nodes $s$ and $t$ of $N$ are as elements of tree $T_N$ and we 
  write      $|s|$ and $s\sqsubseteq t$, and   
      $\rho_N(s)$ for $\rho_{T_N}(s)$.  

The  \emph{bottom part} of  $N$ is defined to be the set of all of its  coloured nodes. 
We prove a simple fact about it. 
\begin{enumerate}
\popcounter
\item\label{I.bottom.N}  If $N\models \bfT_4$, $t$ is a coloured node of $N$ and $s\sqsubseteq t$, 
then $s$ is coloured. 
In other words, the bottom part of $N$ is an initial segment of $N$. 
\pushcounter
\end{enumerate}
If $t$ is an  $(i,j)$-coloured node in $M_4$ then 
 its  immediate predecessor
 $s$ is $(i-1,l)$-coloured for some $l\leq j$ (by (P4)). Since the first $i$ levels of~$T_N$ form a 
 definable set  (Lemma~\ref{L.Def.Nodes}), for a fixed $i$ and $j$ this can be expressed as an  
 $L_*$-sentence and therefore transfers from $M_4$ to~$N$. 
 By  induction, \eqref{I.bottom.N} follows.

 
We say that a subtree $S$ of $T$ is \emph{terminal} if every terminal node of $S$ is also a terminal node of $T$.

\begin{lemma}\label{L.T1m}  Suppose $N\models \bfT_4$ and   
$s\in T_N$ satisfies $\rho_{T} (s)\geq n$ for some $n\geq 1$.  
Then $T_N[s]$ contains a terminal copy of $T_{2,m}$. 
\end{lemma} 
 
 \begin{proof} Let $T=T_N$.  
 We need to prove that  there exists $\Phi\colon T_{2,n}\to T$ satisfying the following 
for all $t$ and $t'$ in $T_{2,n}$ 
 (as before,  $|t|$  denotes the level of~$t$): 
\begin{enumerate}
\popcounter
\item $s\sqsubseteq \Phi(t)$,  
\item $t\sqsubseteq t'$ if and only if $\Phi(t)\sqsubseteq \Phi(t')$, 
\item $|\Phi(t)|=|s|+|t|$, 
\item $t$ is a terminal node in $T_{2,n}$ if and only if $\Phi(t)$ is a terminal node in~$T$. 
\pushcounter
\end{enumerate}
Given such $\Phi$, the image of $T_{2,m}$ under $\Phi$ will be a terminal subtree of $T[s]$. 

We start the proof by verifying the case 
  when      $T=T_2$. The case when $s\in T_2$ is the root is clear. 
 Otherwise,  $\rho_{T_2}(s)=s_0(|s|-1)$,  hence  $\rho_{T_2}(s)\geq n$ if and only if 
  $s_0(|s|-1)\geq n$. Therefore every $t\in T_{2,n}$ satisfies  $t_0(0)<n$ and 
  \[
  \Phi(t)=(s_0^\frown t_0, s_1^\frown t_1) 
  \]
  is a function as required. 

A proof that $M_4$ has this property is analogous, since every node in it is equal to 
$r^\frown s$ for some $r$ and $s$ in $T_2$, 
and $\rho_{S^\frown T_2}(r^\frown s)=s_0(|s|-1)$, unless~$s$ is the root
of $T_2$. 

 It remains to prove  that this property  is preserved by  elementary equivalence. 
  Fix $m$ and $k$ in $\omega$. 
  Lemma~\ref{L.counting} 
implies that the assertion `every node  of height $m$  and rank at least $n+1$ 
 has at least~$l$ distinct immediate successors  of rank  $k$' is elementary.
Since in $M_4$ every node of any height and any rank has infinitely many immediate successors of every possible finite rank, this holds for every $N\models \bfT_4$. 

Therefore if $N\models \bfT_4$ and $s\in T_N$ satisfies $\rho(s)=n$, then $s$ has infinitely many immediate successors of 
rank $k$ for every $k<n$. A function $\Phi$ as required  can now be  constructed by a  recursion similar to one in the proof of Lemma~\ref{L.iso-extension}. 
\end{proof}

\subsection{The language $L_l$ and models $N\rs l$} \label{S.Nl}
Fix $l<\omega$ and a separable $L_*$-structure $N$. We shall be interested in the case when 
$N\models \bfT_4$ or when~$N$ is one of the building blocks $M(k^n)$, for $n\in \omega$, 
for $M_4$. 
Let 
\[
N\rs l=\{s\in N: |s|\leq l\}. 
\] 
Let  $L_l$ denote the finite reduct of $L_*$ consisting of symbols 
$P_{i,j}$ and $f_i$ for $\max(i,j)<l$. 
We shall consider $N\rs l$ as an $L_l$-structure. 
Hence it is an $L_l$-reduct of the substructure of $N$ with the universe $N\rs l$. 
A node $s\in N\rs l$ is \emph{$L_l$-coloured} if it is $(i,j)$-coloured  for some~$P_{i,j}$ in $L_l$. 
  
\begin{lemma} \label{L.Mk} 
Fix $l<\omega$ and functions $k^k$, for $n\in \omega$, satisfying \eqref{I.k0}--\eqref{I.k2}. 
\begin{enumerate}
\popcounter
\item\label{I.Mk.3}  There are only finitely many 
models 
$\{(M(k^m)^\frown T_2)\rs l: m\in \omega\}$ up to isomorphism. 

\item \label{I.Mk.4} For 
every $L_l$-coloured 
node $t\in M_4\rs l$ there exists an automorphism~$\Phi$ of $M_4\rs l$ such that 
$\Phi(t)$ has a coloured (but not necessarily $L_l$-coloured) successor. 
\end{enumerate}
\pushcounter
\end{lemma} 

\begin{proof} \eqref{I.Mk.3} 
Since $k^m$ is an increasing injection, 
\[
F_m=\{t\in T_2: k^m(t)<l\}
\]
 is a finite subtree of $T_2$ of cardinality $l$. 
Consider it as  a tree  whose nodes~$t$ are 
labeled by pairs $(k^m(t),c(t))$. There are (at most)  $l^2$ such pairs,  
and  
up to the isomorphism, there are only finitely many trees of cardinality $l$ with $l^2$-coloured nodes. 
Let 
\[
S_m=\{t\in M(k^m): \text{$t$ is $L_l$-coloured}\}.
\]
It is a subtree of $M(k^m)$ since $s\sqsubseteq t$ implies $c(s)\leq c(t)$. 
 Clearly 
$F_m$ is a subtree of $S_m$.   Consider $S_m$ as a labelled tree, where each node $t$ is labelled by $c(t)$. 
The isomorphism type of this infinite labelled tree  
is, by  (P3) and (P4),  uniquely determined by the isomorphism type of $F_m$. 
We therefore have only finitely many isomorphism classes of labelled trees $S_m$, for $m\in \omega$. 
 
Fix $m$ and $n$ such that the labelled 
trees $S_m$ and $S_n$ are isomorphic. 
 Lemma~\ref{L.iso-extension} implies that this isomorphism can be extended to an isomorphism between 
trees $((S_m)^\frown T_2)\rs l$ and $((S_n)^\frown T_2)\rs l$. This 
gives the required isomorphism between   
$(M(k^m)^\frown T_2)\rs l$
and 
$(M(k^n)^\frown T_2)\rs l$. 
Therefore the isomorphism type of $(M(k^n)^\frown T_2)\rs l$ depends only on $l$ and the isomorphism type of the labelled tree $F_n$. This completes the proof of \eqref{I.Mk.3}. 


\eqref{I.Mk.4} 
Let $m$ be such that $t\in M(k^m)\rs l$.  By \eqref{I.k2}  the functions~$k^n$, for $n\in \omega$, are chosen so 
that  every increasing injection from a finite subtree of~$T_2$ into~$\omega$ is extended by some $k^n$. We can therefore find $n$ such that   
 (using the notation from the proof of \eqref{I.Mk.3}) 
 \begin{enumerate}
 \item [(a)] 
 There exists an isomorphism $\Psi\colon F_m\to F_n$ such that $k^m(s)=k^n(\Psi(s))$
 and $c(s)=c(\Psi(s))$ for all $s\in F_m$. 
 \item [(b)] For every $s\in F_m\setminus \{\langle\rangle\}$,   
  the node $r=\Psi(s)$ satisfies $r_0(j)=s_0(j)+1$
 for all  $j<|s|$.  
\end{enumerate}
  As in \eqref{I.Mk.3}, $\Psi$  extends to an isomorphism 
  \[
  \Phi_1\colon  
(M(k^m)^\frown T_2)\rs l\to (M(k^n)^\frown T_2)\rs l.
\] 
By (b), $\rho(\Phi_1(t))=\rho(t)+1$ and therefore $\Phi_1(t)$ is not a terminal node of the bottom part of $M(k^n)^\frown T_2$. 
 The automorphism $\Phi$ as required 
is defined by $\Phi(s)=\Phi_1(s)$, if $s\in M(k^m)^\frown T_2$, $\Phi(s)=\Phi^{-1}_1(s)$, if $s\in M(k^n)^\frown T_2$, 
and $\Phi(s)=s$ otherwise. 
\end{proof} 

While the proof of Lemma~\ref{L.Mk} is fresh, we define an another family of truncations of $M(k^n)$ used in  \S\ref{S.M[l]}. 

\begin{definition}\label{Def.Mk} 
Given functions $k^n$, for $n\in \omega$, satisfying \eqref{I.k0}--\eqref{I.k2},  $j\geq 1$,  and  $n\geq 1$. 
define a substructure of $M(k^n)$ by 
\[
M(k^n, j)=\{s\in M(k^n): s_0( |s|-1)\geq j-|s|\}. 
\]
\end{definition} 
This structure is  obtained by removing every node $s\in M(k^n)$ with  no extension to the $j$th level.
Therefore~$|s|<j$ implies $\rho_{M(k^n,j)}(s)\geq j-|s|$ for $s\in M(k^n,j)$. 
Since the universe of $M(k^n)$ is~$T_2$,  the  elements of $M(k^n,j)$  are pairs $s=(s_0,s_1)$ 
and  $\rho_{M(k^n)}(s)=s(|s|-1)$.

\begin{lemma} 
  \label{I.Mk.5} 
  Fix $l\in \omega$ and functions $k^n$, for $n\in \omega$, satisfying \eqref{I.k0}--\eqref{I.k2}. 
For every $m$ there are infinitely many $n$ such that 
\[
(M(k^m,j)^\frown T_2)\rs l\cong (M(k^n)^\frown T_2)\rs l
\]
and infinitely many $n'$ such that 
\[
(M(k^m)^\frown T_2)\rs l\cong (M(k^{n'},j)^\frown T_2)\rs l.
\]  
\end{lemma}

\begin{proof} Fix $m$. Following the proof of Lemma~\ref{L.Mk}, 
let (suppressing the subscript on $\rho$ when it is clear from the context)
\[
F_m(j)=\{t\in T_2: k^m(t)<l\text{ and }\rho(t)\geq j-|t| \}
\]
and consider it as a tree whose nodes $t$ are coloured by $(k^m(t), c(t))$. 
As in the proof of Lemma~\ref{L.Mk}, the isomorphism type of $(M(k^m, j)^\frown T_2)\rs l$
is uniquely determined by $F_m(j)$. 
 By the choice of the functions $k^n$, for $n\in\omega$ we can find infinitely many $n$ such that the trees $F_m(j)$ and 
 $F_n=\{t\in T_2: k^n(t)<l\}$ are isomorphic as trees with $l^2$-coloured nodes. 
The tree $S_n$ as defined in Lemma~\ref{L.Mk} is isomorphic  to 
\[
S_m(j)=\{t\in M(k^m,j): \text{$t$ is $L_l$-coloured}\}. 
\]
As in Lemma~\ref{L.Mk}, 
this isomorphism extends to an isomorphism between 
$(M(k^m,j)^\frown T_2)\rs l$ and  $(M(k^n)^\frown T_2)\rs l$. 

This proves the first part of the lemma, and the proof of the second part is analogous. 
\end{proof}

\begin{remark} \label{R.T4} The definition of  $M_4$ depends on the choice of  $k^n$, for $n\in \omega$. However, the proof of Lemma~\ref{L.Mk}
can be modified to show 
that  all instances  of  $M_4$ constructed by using
sequences of functions satisfying  \eqref{I.k0}--\eqref{I.k2} 
are elementarily equivalent. In particular $\bfT_4$ does not depend on this choice. 
This fact will not be used in the proof of Theorem~\ref{T1}. 
\end{remark} 

\subsection{Types $\bs_m$, for $m\in \omega$} \label{S.types-sm}
Let $\bs_0(x)$ be the 1-type of an infinite branch,  
\[
\bs_0(x)=\{d(x,f_n(x))=1/(n+1): n\in \omega\}. 
\] 
In order to define $\bs_m(x)$ for $m>0$, 
fix a bijection $m\mapsto (m(0), m(1))$  
between $\{m\in \omega: m>0\}$ and $\{(m(0),m(1))\in \omega^2: m(0)\leq m(1)\}$.  
   As in  \S\ref{S.NodesEtc},  if $a$ is a node in a tree $T$ and $k>m=|a|$ then  
\[
\Succ_k(a)=\{b\in T: f_m(b)=a\text{ and } |b|=k\}
\]
and 
\[
\varphi_m(x)=d(f_m(x), x)+\left|\frac 1{m} -d(f_{m-1}(x),x)\right|.  
\] 
 The type $\bs_m(x)$ consists of  conditions described in (S1)--(S3) below.
  \begin{enumerate}
\item [(S1)] $\varphi_{m(0)}(x)=0$. 
\item [(S2)] $P_{m(0),m(1)}(x)=0$. 
\item [(S3)] \label{I.sm.3} $\sup_{y\in \Succ_{m(0)+1}(x)} (1\dminus P_{m(0)+1, n}(y))=0$
for all  $n\in \omega$. 
\end{enumerate}
The condition in  (S3) is an abbreviation of an  $L_*$-formula because 
  allowing quantification over definable sets results in a conservative extension of the 
language (see~\S\ref{S.Definable}). 

\begin{lemma} \label{L.S1-3}
If   $N\models \bfT_4$ and $a\in N$ then $a$ realizes $\bs_m$  for some $m>0$ 
if and only if $a$ is a terminal node of the bottom part of $N$. 
\end{lemma}  

\begin{proof} Suppose $a$ realizes $\bs_m$ in $N$ and $m>0$.  
Then  (S1) implies $|a|=m(0)$,  (S2) implies $a$ is coloured, 
and (S3) implies that  all immediate successors of $a$ are uncoloured. 
Therefore  $a$ is  a terminal 
node of the bottom part of $N$. Conversely, suppose $a$ is 
a terminal node of the bottom part of $N$. If $m$ is such that $a$ belongs to the $m(0)$th level 
of $N$ and it is $(m(0),m(1))$-coloured, 
then $a$ realizes $\bs_m$. 
\end{proof}

\subsection{Models $M_4(j)$, for $j\in \omega$.} \label{S.M[l]} 
We are almost ready to  complete the proof of Theorem~\ref{T1}. 
For each  $j\geq 1$ we will  construct  $M_4(j)\models\bfT$
  that omits~$\bs_m$ (\S\ref{S.types-sm}) for all $m$  such that $m(0)<j$ and prove that no model of $\bfT_4$ omits~$\bs_m$ for all large enough $m$.  
(Lemma~\ref{L.almostT1}).   

Given functions $k^n$, for $n\in \omega$, satisfying \eqref{I.k0}--\eqref{I.k2},  $j\geq 1$,  and  $n\geq 1$, as in Definition~\ref{Def.Mk}  
let 
\[
M(k^n, j)=\{s\in M(k^n): s_0( |s|-1)\geq j-|s|\}. 
\]
 The $L_*$-structure
\[
M_4(j)=\bigoplus_n M(k^n,j)^\frown \Too 
\]
clearly omits $\bs_0$ and  
$\bs_m$ for all $m$ such that $m(0)<j$. 

\begin{lemma} \label{L.Isomorphism} Fix $l$ and $j$ in $\omega$. 
\begin{enumerate}
\popcounter
\item\label{I.Mk.3+}  There are only finitely many 
models 
$\{(M(k^m,j)^\frown T_2)\rs l: m\in \omega\}$ up to the isomorphism. 
\item \label{I.Mk.4+} For 
every $L_l$-coloured 
node $t\in M_4(j)\rs l$ there exists an automorphism~$\Phi$ of $M_4(j)\rs l$ such that 
$\Phi(t)$ is not a terminal node of the bottom part of $M_4(j)$.
\item \label{I.Isomorphism} For all $j$ and $l$ the 
 models  $M_4(j)\rs l$ and $ M_4\rs l$ 
are isomorphic. 
\item \label{I.iso.M4(j)} For all $j\in\omega$ the
 models $M_4(j)$ and $M_4$ are elementarily equivalent. 
\end{enumerate}
\pushcounter
\end{lemma} 

\begin{proof} 
Lemma~\ref{L.iso-extension} implies 
\[
(M(k^m,j)^\frown T_2)\rs l\cong (M(k^m)^\frown T_2)\rs l.
\]
 Therefore \eqref{I.Mk.3+} and \eqref{I.Mk.4+}   
are direct consequences of  the corresponding assertions in  Lemma~\ref{L.Mk}. 

\eqref{I.Isomorphism} 
By Lemma~\ref{L.Mk} \eqref{I.Mk.5} and the back-and-forth method we can 
find an enumeration $m(i)$, for $i\in \bbZ$,
of $\omega$ and isomorphisms 
\[
\Phi_i\colon (M(k^{m(i)}, j)^\frown T_2)\rs l \to 
 (M(k^{m(i+1)})^\frown T_2)\rs l. 
\]
 The  union of the graphs of $\Phi_i$ is the graph of the required isomorphism~$\Phi$. 

\eqref{I.iso.M4(j)} Fix $j<\omega$.  By
\eqref{I.Isomorphism}, $M_4(j)$ and $M_4$ have $\e$-dense isomorphic substructure for every $\e>0$. Lemma~\ref{L.isomorphic} therefore implies $M_4(j)\equiv M_4$.  
\end{proof}

\begin{lemma} \label{L.almostT1}      No model of $\bfT_4$ omits all types~$\bs_n$, for $n\in \omega$,  simultaneously. 
\end{lemma} 

\begin{proof} 
Assume $N\models\bfT_4$ and it 
   omits~$\bs_m$ for all $m$.  
   It suffices to know that there exists $m'$ such that 
   $N$ omits $\bs_m$ for all $m$ 
   such that  $m(0)\geq m'$. For every $m(0)\geq m'$,  
 the `bottom part' of $N$ (consisting of all coloured nodes) intersects the $m(0)$th level by the elementarity, and  the tree $T_N$  
is  ill-founded.  Therefore~$N$ has an infinite branch and it realizes~$\bs_0$. 
\end{proof} 

\begin{proof}[Proof of Theorem~\ref{T1}]
We shall prove that $\bfT_4$  
and  types $\bs_n$, for $n\in \omega$ as defined above 
are as required. 
Being theory of a model $M_4$, theory $\bfT_4$ is clearly complete.  
For every $j$ model $M_4(j)$ is a model of  $\bfT_4$
by Lemma~\ref{L.Isomorphism} \eqref{I.Isomorphism} and Lemma~\ref{L.isomorphic}. 
By its definition and the discussion following (S1)--(S3), model $M_4(j)$  omits
all $\bs_m$ for $m(0)<j$. Therefore for every $k$ there exists $j$ large 
enough such that model $M_4(j)$ of $\bfT_4$ omits all $\bs_m$ for $m<k$.  

By Lemma~\ref{L.almostT1} no model of $\bfT_4$ simultaneously omits all $\bs_m$ for $m\in \omega$, and 
this concludes the proof. 
\end{proof}

\subsection{The Extension Lemma}  \label{S.Extension}
Towards the proof of Theorem~\ref{T2}, we prove that the type 
 $\bs_m(x)$ (\S\ref{S.types-sm}) is generically omissible for all $m>0$. 
In the following $\bbPT$ denotes the `infinite' forcing as defined in \S\ref{S.forcing} with $\bfT=\bfT_4$, 
which is $\bbPTS$ in case when $\Sigma$ is the set of all $L$-formulas 
 and $\bar x,y,z$ stand for assorted $d_j$'s.

\begin{lemma}[The Extension Lemma] \label{L.types.2} Assume $\varphi(\bar x,y)<\e$ is a condition in $\bbPT$ which   
forces 
that   $P_{m,n}(y)=0$ for some $m$ and $n$. 
Then for some new variable $z$ 
this condition can be extended to a condition 
$\varphi'(\bar x,y,z)<\e'$ that in addition forces 
  $P_{m+1,k}(z)=0$ for some~$k$ and that $z$ is an immediate successor of $y$. 
\end{lemma} 

\begin{proof} 
Since $\varphi(\bar x,y)<\e$
 is a consistent condition, we can find a tuple $\bar a,b$ in $M_4$ that realizes it. 
Suppose $b$ is not a terminal node of the bottom part of $M_4$. 
 Fix an immediate successor $c$ of $b$ such that $P_{i,j}(c)=0$ for some $i$ and $j$
and extend condition $\varphi(\bar x,y)<\e$ to a condition $\varphi'(\bar x,y,z)<\e'$ that 
decides  $z$ is an immediate successor of $y$ and $P_{i,j}(z)=0$. This condition is as required. 

Now suppose 
 $b$ is a terminal node of the bottom part of $M_4$. 
We may assume that $\varphi$ is in the prenex normal form
since such formulas are uniformly dense in 
the space of all formulas 
by \cite[Proposition~6.9]{BYBHU}. 
By Lemma~\ref{L.CFC} we may also assume that $\e<1$. 
Let $\psi_j(\bar x,y, \bar t)$, for $j<n$, be a list of all atomic subformulas 
of $\varphi(\bar x,y)$. Thus $\varphi(\bar x,y)$ is of the form (variables in $\psi_j$ are 
suppressed and  dummy variables $t_0$ and $t_{l-1}$  are
 added, if necessary, so that the string of quantifiers starts with $\sup$ and ends with $\inf$; this is done 
 only for readability) 
\begin{equation}
\tag{$\dagger$}
\sup_{t(0)} \inf_{t(1)}  \dots \inf_{t(l-1)} 
f(\psi_0, \dots, \psi_{n-1})
\end{equation}
for some $l\in \omega$, variables $t(j)$ for $j<l$, and  continuous function $f$. 
Let 
\[
\e'=(\e-\varphi(\bar a,b)^{M_4})/3. 
 \]
Since the interpretation of $f(\psi_0,\dots, \psi_{m-1})$ is uniformly continuous and its modulus of continuity does not depend on 
the interpretation, we can find   
 $\delta>0$  such that changing the values of all variables occurring in 
any  $\psi_j$ by $< \delta$ affects the 
change of the value of  
$f(\psi_0, \dots, \psi_{n-1})$
 by  $<\e'$. 
Let $d>1/\delta$ be such that all pairs $i,j$ for which predicate $P_{i,j}$  
occurs in some $\psi_j(\bar x,y)$ satisfy $\max(i,j)<d$. By increasing $d$ if needed 
we may also assume 
that all projection functions $f_i$ occurring in some $\psi_j(\bar x,y)$ satisfy $i<d$ and 
 that $\bar a,b$ belong to one of the first $d$ levels of $M_4$. 

Consider $L_d$ and   $M_4\rs d$ as defined in \S\ref{S.Nl}. 
   
   \begin{claim} \label{Claim.*} For any tuple $\bar p, d$ in $M_4\rs d$ 
  of the same  length as $\bar x, y$  we have 
  \begin{equation*} 
  |\varphi(\bar p,d)^{M_4\rs d}-\varphi(\bar p,d)^{M_4}|< \e'.
  \end{equation*}
  \end{claim} 

\begin{proof} 
For every tuple $\bar q$ in $M_4$ such that $\bar p,d,\bar q$ is of the same 
length as $\bar x,y,\bar t$ (where $\bar t$ are the 
variables occurring freely in formulas $\psi_j$ but not in $\varphi$), 
we have $d(q_i, f_d(q_i))<\delta$ for all $i$. 
For every $q\in M_4$ there exists $f_d(q)\in M_4 \rs d$
within $<\delta$ of $q$.  
Since $\varphi$ is as in  ($\dagger$),  
 the claim follows by the choice of~$\delta$. 
\end{proof} 

By \eqref{I.Mk.4} of  Lemma~\ref{L.Mk} there exists an automorphism $\Phi$ of $M_4\rs d$
which sends $b$ to a  node which is not a terminal node of the bottom part of $M_4$. 
This is certainly not an automorphism of $M_4$ but the  
 $\Phi$-image of  $\bar a,b$ still satisfies  $\varphi(\bar x,y)<\e$. 
We can fix an immediate coloured successor $c$ of $\Phi(b)$ 
and extend  $\varphi(\bar x, y)<\e$ to a condition 
 $\psi(\bar x, y,z)<\e'$ which    implies $z$ is a coloured 
 immediate successor of $c$. 
This  completes the treatment of  the case when $b$ is a terminal node 
of the bottom part of $M_4$ and  the proof. 
 \end{proof}

Generic model $M_G$ was defined in Theorem~\ref{P3}. 

\begin{lemma}\label{L.M.generic}
There is a countable family $\bfF$ of  dense  subsets of $\bbPTS$
such that if a filter $G$ is $\bfF$-generic  then $M_G$ is a model of $\bfT_4$
and   every  coloured node of $M_G$ has a coloured immediate successor. 
\end{lemma} 

\begin{proof} By Theorem~\ref{P3} there 
is a countable family~$\bbFTS$ of dense subsets of~$\bbPTS$ such that if a filter  $G$ is $\bbFTS$-generic then 
$M_G\models \bfT_4$.  
  Lemma~\ref{L.types.2}  implies that the set
  \begin{align*} 
\bfD_{i,j,k}=\{&p\in \bbFTS: \bfT_4+p\models \text{$d_k$ is not $(i,j)$-coloured or }\\
& \bfT_4+p\models 
\text{`$d_j$ has an  $(i+1,l)$-coloured successor for some $l$'}\}
\end{align*} 
is dense in $\bbFTS$ for all $i,j$, and $k$. 

Let $\bfF'=\bbFTS\cup \{\bfD_{i,j,k}: i,j,k\in \omega\}$ and suppose 
$G$ is $\bfF'$-generic. 
 Then  
$M_G\models \bfT_4$, 
 hence  node of $M_G$ is an  isolated point 
  and is therefore an interpretation of some constant $d_j$.  
By the  elementarity 
$M_G$  has a coloured node, and every coloured node of $M_G$ has a coloured successor. 
  This concludes the proof. 
 \end{proof}

%
%

\begin{proof}[Proof of Theorem~\ref{C.generic}] 
We need to prove that $\bfT_4$ is a complete theory in a countable language 
and that  the type $\bs_0$ is  omissible in a model of $\bfT_4$, yet  
 forced by   $\bbP_{\bfT_4}$ to be realized in~$M_G$.
Being defined as  $\Th(M_4)$, 
   $\bfT_4$ is complete. 
The type $\bs_0$ of an infinite branch 
is omitted 
in  $M_4\models\bfT_4$. 
By Lemma~\ref{L.M.generic},  
$\bbPT$ forces that all $\bs_m$ for $m\geq 1$ are omitted in~$M_G$. 
Therefore  the nodes of $M_G$ form an ill-founded tree and $M_G$ realizes~$\bs_0$. 
\end{proof}

\subsection{Proof of Theorem~\ref{T2}}
\label{S.ProofT2}

We shall  define a   complete  theory $\bfT_5$ in a countable language $L_5$ 
and  types $\bs$ and $\bt$ omissible in models of $\bfT_5$  
such that no model of $\bfT_5$ simultaneously omits both of them. 
 
 Let   $T_1$ and $T_2$ be as in \S\ref{S.T1T2}, 
 and let the language $L_*$, the model $M_4$, and the theory $\bfT_4$ be as in 
 \S\ref{S.M4}.  
 A two-sorted language~$L_5$ 
 is the expansion  of~$L_*$ 
 with sorts $X$ and $Y$. The sort $Y$ 
corresponds to $L_*$,  so that  if $N$ is 
an $L_5$-structure then~$Y^N$ is an $L_*$-structure. The sort $X$ is equipped with the following: 
\begin{enumerate}
\popcounter
\item a discrete $\{0,1\}$-metric~$d$,   
\item a unary function $g\colon X\to X$, and 
\item a unary 
function $h\colon X\to Y$. 
\pushcounter
\end{enumerate}
Both $g$ and $h$ are 
 interpreted as  1-Lipshitz functions (necessarily, since $X$ is discrete). 
This describes the language $L_5$ and we proceed to describe an $L_5$-model $M_5$. 
The model $Y^{M_5}$ is isomorphic to $M_4$. 
The set~$X^{M_5}$ is a tree isomorphic to the underlying tree of $M_4$ and 
we interpret~$h$ as the tree isomorphism function from~$X^{M_5}$ onto $Y^{M_5}$. 
Finally, the interpretation of~$g$ sends $\langle\rangle$ to itself and every 
other node of~$X^{M_5}$ to 
its immediate predecessor. 

This describes $M_5$; let 
 $\bfT_5=\Th(M_5)$. 

We proceed to define types $\bs$ and $\bt$. 
The former is 
the type of an infinite branch 
in $Y^N$; i.e. $\bs$  is  
  $\bs_0$ as defined in \S\ref{S.types-sm}. 
The type~$\bt(x)$, for $x$ of sort $X$, subsumes all  types $\bs_n(x)$ for $n\geq 1$ 
as defined in \S\ref{S.types-sm}. It asserts the following ($g^k$ is 
the $k$th iterate of $g$ for~$k\geq 1$): 
\begin{enumerate}
\popcounter
\item\label{I.t.1}   $\inf_{y\in X}  d(x,g^k(y))=0$ for all $k\geq 1$. 
\item \label{I.t.2} $\inf_{y\in X}  \max(d(x,g(y)), P_{m,n}(h(y)))=1 $ for all $m$ and $n$. 
\end{enumerate}
Informally,  \eqref{I.t.1} asserts that $x$ has infinite rank and \eqref{I.t.2} 
asserts that $h(x)$ has no coloured successors. We will prove that this is indeed so, 
after a few preliminaries. 

By Lemma~\ref{L.Def.Nodes}, the following predicates in the sort $Y$:  
\begin{align*}
 \Pgm(s):&\text{ the height of $s$ is $>m$,} \\
\Plm(s):&\text{ the height of $s$ is $\leq m$,}
\end{align*} 
are definable.

\begin{lemma} \label{L.M5} 
The following hold for every  $N\models \bfT_5$ and $a$ and $b$ 
in~$X^N$. 
\begin{enumerate}
\popcounter
\item \label{I.T2.1} If $a\neq b$ and $h(a)$ and $h(b)$ are nodes of $Y^N$ 
then $h(a)\neq h(b)$. 
\item \label{I.T2.2} If $h(b)$ is a node of height $m+1$ then $h(g(b))=f_m(h(b))$. 
\item \label{I.T2.3} Every node of $Y^N$ is in the range of $h$. 
\item \label{I.T2.4} The set of immediate successors of $h(a)$ is equal to 
\[
\{h(c): c\in X^N\text { and }  g(c)=a\}.
\]  
\pushcounter
\end{enumerate}
\end{lemma} 

\begin{proof} 
We  prove each of  \eqref{I.T2.1}--\eqref{I.T2.4} for nodes of height $\leq m$. 

\eqref{I.T2.1} 
The following sentence is in $\Th(M_5)$ for every $m\geq 1$: 
\[
\sup_{x\in X,y\in X} \min(\Pgm(h(x)), \Pgm(h(y)), d(x,y), \frac 1m\dminus d(h(x), h(y))). 
\]
Thus in every model $N$ of $\bfT_5$, for all $m\geq 1$ and $a$ and $b$ in $X^N$ such that $\max(|h(a)|, |h(b)|)\leq m$
we have $h(a)= h(b)$ or  $d(h(a), h(b))\geq 1/m$.  This implies \eqref{I.T2.1}. 

\eqref{I.T2.2} For all $m\in \omega$ and $x\in X^N$ one of   
(i) $|h(x)|>m+1$,  (ii) $|h(x)|\leq m$,  or (iii) $h(g(x))=f_m(h(x))$ applies. Therefore  $\Th(M_5)$ contains the sentence 
\[
\sup_{x\in X} \min(\Pgmm(h(x)), \Plm(h(x)), d(h(g(x)), f_m(h(x)))
\]
and if   $N\models \bfT_5$ and  $b\in X^N$ satisfies $|h(b)|=m+1$ 
then  $ h(g(b))=f_m(h(b)$. 

\eqref{I.T2.3} For every $m\geq 1$ the following sentence is in $\Th(M_5)$: 
\[
\sup_{x\in Y} \min(\Pgm(x), \inf_{y\in X} d(h(y), x)). 
\]
Since $\Pgm(x)$ is definable, 
\eqref{I.T2.3} holds
in every model of~$\bfT_5$.

\eqref{I.T2.4}  For every $m\geq 1$ the following sentence is in $\Th(M_5)$: 
\begin{multline*}
\sup_{x\in X, y\in Y} \min(\Pgm(h(x)),\Plmm(h(x)), \Pgmm(y),\Plm(y), \\
 \frac 1m\dminus d(f_m(y), h(x)),
 \inf_{z\in X} \max(d(h(z),y), d(g(z),x))). 
\end{multline*} 
Again, 
 \eqref{I.T2.4} holds in every model of $\bfT_5$. 
\end{proof}

\begin{lemma} \label{L.T5st} Suppose that   
$N\models \bfT_5$ and let $N'=Y^N$.   
 If $a\in N$ realizes~$\bt$ then  $b=h(a)$ 
satisfies $\rho_{N'}(b)\geq \omega$ and $b$ has no coloured successors in $N'$. 
\end{lemma} 

\begin{proof}  
Suppose $a$ realizes $\bt$ and let $b=h(a)$. 
By \eqref{I.t.1} for  every $k\geq 1$ there exists $y\in X^N$ such that $g^k(y)=a$
and therefore 
 $\rho_N(a)\geq \omega$. 
 By  Lemma~\ref{L.M5} we have $\rho_{Y^N}(b)\geq \omega$. 
 On the other hand,    \eqref{I.t.2} implies that    
for all $y$ such that $g(y)=a$, the  node 
$h(y)$  is not coloured.  
 By  Lemma~\ref{L.M5}, $b$ has no coloured successor. 
%
\end{proof} 

\begin{lemma} 
Each of the types $\bs$ and $\bt$ is omissible in a model of $\bfT_5$.
\end{lemma} 

\begin{proof}  
Type $\bs$ is clearly omitted in $M_5$.  Type $\bt$ is 
omitted in a $\bbP_{\bfT_5}$-generic model. 
A proof of the latter fact follows  the proofs of the Extension Lemma (Lemma~\ref{L.types.2}), 
Lemma~\ref{L.M.generic} and Theorem~\ref{C.generic} almost verbatim. Alternatively, one could
 prove that the $L_*$-part 
of the $\bbP_{\bfT_5}$-generic model is
$\bbPT$-generic and use these facts directly. 
\end{proof} 

It remains to check that $\bs$ and $\bt$ are not simultaneously omissible in a model  
of $\bfT_5$. If $N\models \bfT_5$ 
 omits $\bs$ then $X^N$ omits~$\bs_0$. 
 Therefore all elements of $X^N$ are nodes.
 In particular the bottom part of $X^N$ is well-founded, and  any of its terminal nodes
   realizes $\bt$ in~$N$. 

This completes the proof of Theorem~\ref{T2}. 

\section{Concluding remarks} \label{S.CR}

We don't know whether \eqref{T0.2} of Theorem~\ref{T0} (i.e.  Theorem~\ref{P2}) is sharp. 

\begin{question} \label{Q.Stability} 
What are the possible complexities of the set of types 
 omissible in a model of a
 complete theory~$\bfT$ in a countable language? 
In particular, 
can this set be $\bSigma^1_2$-complete? 
\end{question}

According to  \cite{caicedo2014omitting} and  
\cite[Definition~4.12]{eagle2013omitting}, a type $\bt(\bar x)$ is 
metrically principal over a theory $\bfT$ (we consider the case when $L$ is the 
fragment consisting of all finitary sentences) if and only if for every $\delta>0$ 
the type $\bt^\delta(\bar x)$, asserting that every finite subset of $\bt$ 
is realizable by an $n$-tuple within $\delta$ of $\bar x$, is principal.

For example, type $\bt$ defined in the proof of Proposition~\ref{P1} 
is metrically principal over $\bfT^S$ if the tree $S$ has height $\omega$. 
This is because $\bt_{1/n}$ is realized by any node of $S$ that is not an end-node. 
This gives an example of an omissible  metrically principal type. 
A simple argument shows that a  \emph{complete} metrically principal type cannot be omissible.

\subsection{\cstar-algebras} \label{S.C*} 

The original motivation for this study came from the model-theoretic study of \cstar-algebras. 
Many important properties of \cstar-algebras are axiomatizable (see \cite[Theorem~2.5.1]{Muenster}), and numerous non-axiomatizable properties 
are \udt{} (see \cite[Definition~5.7.1]{Muenster} and the discussion afterwards). 
The answers to some of the most prominent open problems in the  theory of \cstar-algebras
depend on  whether  \cstar-algebras with these properties (in particular, 
nuclear---also known as amenable---\cstar-algebras) can be constructed in a novel way. 
 Therefore Theorem~\ref{T.Uniform}, 
Theorem~\ref{T.Uniform.1}, Proposition~\ref{P4},  Proposition~\ref{P4+} and Corolary~\ref{C.ec} open 
 possibilities for constructing \cstar-algebras with prescribed first-order properties in these classes (see \cite{goldbring2017enforceable}, \cite{goldbring2017robinson}). 
Also, 
 some of the deepest recent results on classification  of \cstar-algebras have equivalent formulation in the language of (metric) first-order logic (see \cite{EllTo:Regularity},
the introduction to \cite{sato2014nuclear}, and \cite{Fa:Logic} or \cite{Muenster}).

We have   a machine for construction  of   \cstar-algebras with properties prescribed 
by a given  theory. 
These are generic algebras obtained by Henkin  construction as described in  \S\ref{S.forcing}
and \S\ref{S.Uniform}. Although they are assembled from finite pieces corresponding to conditions of $\bbPT$ or one of its variations, 
they are not obviously obtained from matrix algebras and abelian algebras by applying basic operations 
of taking inductive limits, crossed products by $\bbZ$, stable isomorphisms, quotients,  
extensions, hereditary subalgebras, or KK-equivalence
(cf. the bootstrap class problem, \cite[IV.3.1.16 and V.1.5.4]{Black:Operator}). 
At present no method for assuring that the \cstar-algebras obtained by using the Henkin construction 
do not belong to the (large or small) bootstrap class is known. Results of \cite{Muenster} (combined with 
 \S\ref{S.forcing} and \S\ref{S.Uniform}) reduce several  prominent open problems on classification of 
  \cstar-algebras to problems about the existence of theories with certain properties.

In \cite{Kec:C*} Kechris  defined a Borel space of \cstar-algebras and proved that the nuclear \cstar-algebras form 
a Borel subset. We give a generalization of this result. Recall that a Borel structure on the space
of models was defined in \S\ref {S.Borel.Models}. Although this space is different from one used by Kechris, 
these representations of space of separable \cstar-algebras are equivalent (\cite{FaToTo:Descriptive}). 

\begin{coro} \label{C.C*} The following sets  of \cstar-algebras are Borel subsets of the standard Borel space of 
\cstar-algebras: UHF, AF, nuclear, nuclear dimension $\leq n$ for $n\leq \aleph_0$, decomposition rank $\leq n$ for 
$n\leq \aleph_0$, tracially AF, simple. 
\end{coro}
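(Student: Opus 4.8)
The plan is to reduce the first six classes to Proposition~\ref{P.UDT-Borel} and to treat simplicity by a separate, more delicate argument. First I would fix a separable language $L_{C^*}$ in which the separable C*-algebras are axiomatized by a theory $\bfT_{C^*}$ (the multi-sorted language with a sort $D_n$ for the $n$-ball of each $n\geq1$, as in \cite{FaHaSh:Model2}). By the lemma recorded after Lemma~\ref{L.Borel.Theory} the set of $\gamma\in\hM(L_{C^*})$ with $M(\gamma)\models\bfT_{C^*}$ is Borel, and by \cite{FaToTo:Descriptive} the resulting standard Borel space of separable C*-algebras is equivalent to the one defined by Kechris in \cite{Kec:C*} (the proof being the analogue of \cite[\S3]{EFPRTT} for C*-algebras). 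Hence it suffices to show that each of the listed classes is a Borel subset of the space of models of $\bfT_{C^*}$ sitting inside $\hM(L_{C^*})$, and then transport the conclusion along this equivalence.

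For UHF, AF, nuclear, nuclear dimension $\leq n$ (each finite $n$), decomposition rank $\leq n$ (each finite $n$), and tracially AF, the point is that by \cite{Mitacs2012} and \cite{Muenster} each of these classes of models of $\bfT_{C^*}$ is \udut{}. Proposition~\ref{P.UDT-Borel} then applies verbatim and yields that each is Borel. For ``nuclear dimension $\leq\aleph_0$'' (respectively ``decomposition rank $\leq\aleph_0$''), read as the class of C*-algebras of finite nuclear dimension (resp.\ finite decomposition rank), one simply takes the countable union $\bigcup_{n\in\bbN}\{A:\text{nuclear dimension of }A\leq n\}$ of the Borel sets just obtained, which is again Borel.

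The genuinely different case, and the one I expect to be the main obstacle, is simplicity. The natural reformulation is that $A$ is simple if and only if for all $b,c$ in the unit ball $D_1$ of $A$ one has
\[
\dist\bigl(b,\overline{AcA}\bigr)\ \leq\ \|b\|\,(1-\|c\|),
\]
where $\overline{AcA}$ is the closed two-sided ideal generated by $c$; indeed, when $c=0$ both sides equal $\|b\|$, when $c\neq0$ and $A$ is simple the left side is $0$, and when $A$ is not simple one picks $c$ of norm $1$ in a proper ideal $J$ and $b\in D_1$ with $\dist(b,J)>0$. Moreover $\dist(b,\overline{AcA})=\inf_n\theta_n(b,c)$, where for each fixed $n$ the expression $\theta_n(b,c)=\inf\{\|b-\sum_{i\leq n}x_i c y_i\|:x_i,y_i\in D_1\}$ is an $L_{C^*}$-formula. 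The difficulty is that, unlike for the six classes above, this does \emph{not} fit the template of a uniform sequence of types: the $\theta_n$ do not share a common modulus of uniform continuity, and the predicate $\max\bigl(0,\dist(b,\overline{AcA})-\|b\|(1-\|c\|)\bigr)$ is only upper semicontinuous in $(b,c)$ — a reflection of the well-known discontinuity of the assignment $c\mapsto\overline{AcA}$ — so it cannot be realized as $\inf_i\phi_i$ for formulas of a single modulus, and Lemma~\ref{L.uniform.1} (evaluation on a countable dense set) does not apply. Instead I would invoke the known fact that the separable simple C*-algebras form a Borel subset of the standard Borel space of C*-algebras, proved by a direct descriptive-set-theoretic analysis in the spirit of the appendix of \cite{FaToTo:Descriptive} (for instance through a Borel parametrization of the primitive ideal space, or of a weak\(^*\)-dense sequence of pure states together with the kernels of their GNS representations); transported along the equivalence of Borel structures above, this gives the remaining case.
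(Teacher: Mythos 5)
Your handling of the first six classes matches the paper exactly: the paper's entire proof is one sentence, asserting that each of the listed classes (including \emph{simple}) is \udt{} by \cite{Mitacs2012} and \cite{Muenster} and then invoking Proposition~\ref{P.UDT-Borel}, together with the equivalence of the relevant Borel parametrizations from \cite{FaToTo:Descriptive}. Your treatment of the case $n=\aleph_0$ as a countable union over finite $n$ is a harmless reformulation of the same idea.

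The divergence is in how you treat simplicity, and here you should be careful. You correctly observe that the na\"\i ve formulas $\theta_n(b,c)=\inf\{\|b-\sum_{i\leq n}x_icy_i\|:x_i,y_i\in D_1\}$ have Lipschitz constant growing like $n$ in the variable $c$, so this particular sequence is not uniform, and you correctly observe that $\dist(b,\overline{AcA})$ is only upper semicontinuous in $(b,c)$. But the paper does \emph{not} treat simplicity as a special case requiring a separate descriptive-set-theoretic argument; it claims, citing \cite{Muenster}, that the class of simple C*-algebras is itself \udt{}, and then Proposition~\ref{P.UDT-Borel} applies uniformly to all seven classes. (Note the paper is careful to distinguish this from \udut{}: in \S\ref{S.C*} it lists UHF, AF, nuclear, finite nuclear dimension, finite decomposition rank, and TAF as \udut{}, pointedly omitting simple from that stronger list, while the proof of Corollary~\ref{C.C*} only needs the weaker \udt{} property for all seven.) So the paper's claim is that there is a reformulation of simplicity by a uniform sequence of types that circumvents the modulus problem you identified --- presumably via a Cuntz--Pedersen-type normalization, or by passing to matrix amplifications where the relevant infima become $1$-Lipschitz in $c$ --- and this is exactly what you should be extracting from \cite{Muenster} rather than appealing to an unspecified external fact about a Borel parametrization of primitive ideal spaces. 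As written, your proposal for the simple case neither reproduces the paper's (uniform) argument nor supplies a complete substitute: the phrase ``invoke the known fact, proved by a direct descriptive-set-theoretic analysis in the spirit of \dots'' is a placeholder, not a proof. You have identified a genuine subtlety, but the right move is to locate the actual uniform sequence of types in \cite{Muenster}, not to abandon the method for this one case.
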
 

\begin{proof} 
Since each of the sets of \cstar-algebras listed above is \udt{} by \cite{Mitacs2012} and  \cite{Muenster}, the conclusion follows
by Proposition~\ref{P.UDT-Borel}. 
\end{proof}

The class of all cardinals $\kappa$ for which the set of types over any set
of cardinality $\kappa$ in a model of theory $\bfT$ has cardinality at most $\kappa$
is an important invariant of a   first-order theory (see~\cite{Sh:Classification}; 
see also  \cite[\S 14]{BYBHU}, \cite[\S5]{FaHaSh:Model2} for the metric version). 
To  a complete theory $\bfT$ in a countable language~$L$ in the logic of metric
structures one can 
associate descriptive complexities of 
distinguished sets of (not necessarily complete)  types. 
Our results suggest that these invariants  provide  
nontrivial information about~$\bfT$. 
 In particular, it is plausible 
  that in the case when $\bfT$ is a natural  theory (e.g. the theory of a \cstar-algebra)
   the set of types omissible in a model of~$\bfT$ is Borel.  
 Each theory used in our counterexamples interprets a nontrivial theory of trees (e.g. the Baire space or the tree $T_2$, see \S\ref{S.Baire} and \S\ref{S.T1T2}). 
Is this a necessary condition for pathological behaviour of metric theories? 

\begin{problem} Find a model-theoretic characterization of 
 complete theories~$\bfT$ in a separable 
language  with each the following properties.   
\begin{enumerate}
\item The set of complete types omissible in a model of~$\bfT$ is 
Borel. 
\item If two types are omissible in models of $\bfT$, then 
they are simultaneously omissible in a model of $\bfT$. 
\item If  types $\bt_n$, for $n\in \omega$, are such that 
for any $k\in \omega$ the types $\bt_n$, for $n<k$, are simultaneously 
omissible in a model of $\bfT$, then 
all of these types  are simultaneously omissible in a model of $\bfT$. 
\end{enumerate}
\end{problem}

\bibliographystyle{acm}
\bibliography{ifmainbib}

\end{document}